\newtheoremstyle{alg}
    {3pt}
    {3pt}
    {}
    {}
    {\bfseries}
    {.}
    {\newline}
    {}
\newtheorem{theorem}{Theorem}[section]
\newtheorem{lemma}[theorem]{Lemma}
\newtheorem{proposition}[theorem]{Proposition}
\newtheorem{corollary}[theorem]{Corollary}
\theoremstyle{definition}
\newtheorem{remark}[theorem]{Remark}
\theoremstyle{alg}
\newtheorem{algorithm}[theorem]{Algorithm}
\newcommand{\N}{\mathbb{N}}
\newcommand{\veps}{\varepsilon}
\newcommand{\wt}[1]{\widetilde{#1}}
\newcommand{\diff}[1]{\ensuremath{\frac{\operatorname{d}}{\operatorname{d}\!{#1}}}}
\newcommand{\dint}[1]{\ensuremath{\operatorname{d}\!{#1}}}
\newcommand{\vnot}[2]{\ensuremath{\operatorname{#1}_{\neg#2}}}
\newcommand{\mcO}{\mathcal{O}}
\newcommand{\algTC}{\textsc{ThreeCycleCandidates}\xspace}
\newcommand{\algBE}{\textsc{BolsteringElements}\xspace}
\newcommand{\algBC}{\textsc{BuildCycle}\xspace}
\newcommand{\algLC}{\textsc{ConstructLongCycle}\xspace}
\newcommand{\algDFP}{\textsc{IsFixedPoint}\xspace}
\newcommand{\algFPM}{\textsc{AdjustCycle}\xspace}
\newcommand{\algEC}{\textsc{AppendPoints}\xspace}
\newcommand{\algGens}{\textsc{StandardGenerators}\xspace}
\newcommand{\algMain}{\textsc{RecogniseSnAn}\xspace}
\DeclareMathOperator{\Aut}{Aut}
\DeclareMathOperator{\Sym}{S}
\DeclareMathOperator{\Alt}{A}
\DeclareMathOperator{\supp}{supp}
\DeclareMathOperator{\prob}{P}
\DeclareMathOperator{\fix}{fix}
\DeclareMathOperator{\Centra}{C}
\DeclareMathOperator{\inv}{inv}
\DeclareMathOperator{\trip}{trip}
\colorlet{details}{black!30} 
\newcommand{\CompileWithDetails}{0}			
\newcommand{\details}[1]{\ifthenelse{\equal{\CompileWithDetails}{1}}{%
	\begin{quote}
    \color{details}
	\textbf{Details:}
	#1
	\end{quote}
}{}}
\newcommand{\detailsalign}[1]{ \ifthenelse{\equal{\CompileWithDetails}{1}}{%
& \color{details} #1  \\
}{} } 
\newcommand{\detailsalt}[2]{ \ifthenelse{\equal{\CompileWithDetails}{1}}{#1}{#2}}%
\newcommand{\detailsinline}[1]{ \ifthenelse{\equal{\CompileWithDetails}{1}}{\color{details} #1 \color{black}}{}}%
\newcommand{\onlyshortversion}[1]{ \ifthenelse{\equal{\CompileWithDetails}{1}}{}{#1} }
\title{Fast recognition of alternating groups of unknown degree}
\author[1]{Sebastian Jambor\thanks{jambor@math.auckland.ac.nz}}
\author[2]{Martin Leuner\thanks{leuner@momo.math.rwth-aachen.de}}
\author[3]{Alice C.~Niemeyer\thanks{alice.niemeyer@uwa.edu.au}}
\author[2]{Wilhelm Plesken\thanks{plesken@momo.math.rwth-aachen.de}}
\affil[1]{
Department of Mathematics,
The University of Auckland
}
\affil[2]{
Lehrstuhl B f\"ur Mathematik,
RWTH Aachen University
}
\affil[3]{
Lehrstuhl D f\"ur Mathematik,
RWTH Aachen University\\
and
Centre for the Mathematics of Symmetry and Computation\\
University of Western Australia
}
\begin{document}

\date{}
\maketitle			

\begin{abstract}
We present a constructive recognition algorithm to decide
whether a given
black-box group is isomorphic to an alternating or a symmetric group without
prior knowledge of the degree.
This eliminates the major gap in known algorithms, as they require the
degree as additional input.

Our methods are probabilistic and rely on results about proportions
of elements with certain properties in alternating and symmetric groups.
These results are
of independent interest;
for instance, we establish a lower bound for the proportion of involutions with small support.
\end{abstract}

%
%

\section{Introduction}

The computational recognition of finite simple groups
is a fundamental task in the finite matrix group recognition project
(see \cite{LG,NS,OBrien}).
Generally not much is known about the
way in which a group might be given as input
and therefore algorithms which take
black-box groups (see \cite{Babai}) as input
are the  most versatile. For the important infinite family of
alternating groups,
the present black-box algorithms \cite{Bealsetal,BratPak}
can only test whether a given black-box group is isomorphic
to an alternating or a symmetric group of a particular degree,
provided as additional input to the algorithm. Therefore deciding
whether a given black-box group is isomorphic to an alternating group
may require to run the algorithm once for each possible degree.
The present paper describes a one-sided Monte-Carlo
(see e.g. \cite[p.~14]{akos})
black-box algorithm which avoids this bottleneck.
Our algorithm takes as input a black-box group
given by a set of generators together with a natural number $N$ and
decides whether the given group is isomorphic to an alternating group
of any degree at most $N$. If the algorithm proves this to be the case,
it computes the degree of the group and recognises it constructively.
Otherwise the algorithm reports failure.
Our algorithm runs in time nearly linear in $N$
whereas the older algorithms have a runtime complexity of $\widetilde{\mcO}(N^2)$
to solve the same task in the worst case.

Given a black-box group $G$,
we let $\mu$ denote an upper bound for the cost of multiplying two
elements in $G$ and let $\rho$ denote an upper bound for the cost of
computing a uniformly distributed, independent random element of~$G$.
Throughout this paper, $\log$ denotes the natural logarithm.

\begin{theorem}\label{thm:main}
Algorithm~\ref{alg:Main}, \algMain,
is a one-sided Monte-Carlo algorithm
with the following properties.
It takes as input a black-box group $G = \langle X \rangle$,
a natural number $N$ and a real number $\veps$ with
$0 < \varepsilon < 1$.
If $G\cong \Alt_n$ or $G \cong \Sym_n$ for some $9 \le n \le N$,
it returns with probability at least $1-\varepsilon$ the
degree $n$ and an isomorphism $\lambda: G \rightarrow \Alt_n$ or
$\lambda: G \rightarrow \Sym_n$. Otherwise it reports failure.
The algorithm runs in time $\mcO( N \log(N)^2 \log(\veps^{-1}) (|X|\mu + \rho))$ and
stores at most $\mcO( \log (N) )$ group elements at any moment.
\end{theorem}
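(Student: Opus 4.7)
The plan is to prove Theorem~\ref{thm:main} by analysing each of the sub-algorithms \algTC, \algBE, \algBC, \algLC, \algDFP, \algFPM, \algEC, \algSC and \algGens separately, and then pipelining them inside \algMain. For each sub-algorithm I would establish (i) a lower bound on the success probability, (ii) a runtime bound and (iii) a bound on the number of group elements held in memory at any one time. The Monte Carlo guarantee is one-sided because the output of the pipeline can always be verified deterministically, for example by checking a short presentation of $\Alt_n$ or $\Sym_n$ on the constructed standard generators; hence no incorrect positive output can arise, only undetected failures.

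In the affirmative case, with $G\cong\Alt_n$ or $G\cong\Sym_n$ for some $9\le n\le N$, the algorithm proceeds in phases. \algTC produces candidates that, with constant positive probability, are pre-images of $3$-cycles, using the proportion estimates for involutions of small support highlighted in the abstract. \algBE together with \algBC then refine such a candidate into a genuine $3$-cycle and subsequently into a cycle of significantly larger length by random conjugation. \algLC, \algDFP, \algFPM and \algEC extend this cycle until the exact degree $n$ is identified, while \algSC disposes of the remaining degenerate configurations. Finally, \algGens produces standard generators giving the isomorphism $\lambda$. Each stage fails with probability at most some fixed constant, so repeating each stage $\mcO(\log(\veps^{-1}))$ times amplifies the overall success probability above $1-\veps$ by a standard Chernoff-style amplification.

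For the complexity, each random element costs $\rho$, each multiplication $\mu$, and expressing an element as a word of length $\mcO(|X|)$ in the input generators contributes the $|X|\mu$ factor. The dominant cost lies in the construction and extension of the long cycle: locating and testing up to $\mcO(N)$ potential new points, powering elements of order up to $N$ contributes $\mcO(\log N)$ squarings, and the probability amplification contributes a further $\log(\veps^{-1})$ factor. Multiplying these together yields the asserted bound $\mcO(N\log(N)^2\log(\veps^{-1})(|X|\mu+\rho))$. Only the input generators, the current cycle witness, and a logarithmic stack of auxiliary elements are kept in memory, giving the $\mcO(\log N)$ storage bound.

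The main obstacle will be the analysis of \algTC, since it rests on lower bounds for proportions of small-support involutions that must be established uniformly in~$n$; this is precisely the combinatorial content flagged as being of independent interest in the abstract. Once those proportion estimates and the correctness lemmas for the individual sub-algorithms are in place, Theorem~\ref{thm:main} follows by a straightforward accounting of the phases, their failure probabilities and their per-call costs.
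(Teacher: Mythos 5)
Your plan is essentially the paper's own proof: correctness/runtime/storage lemmas for each sub-algorithm, a pipeline in which every stage is run with a fixed constant failure probability, one-sidedness secured by the deterministic presentation/verification check at the end, and $\lceil\log_2\veps^{-1}\rceil$ repetitions of the whole pipeline (rather than $\veps$-dependent inner calls, which would incur $\log(\veps^{-1})^2$) to reach success probability $1-\veps$. When fleshing it out, note only that the second $\log N$ factor in the runtime comes from the $M$-th powering in \algTC and from running \algLC and \algGens once per candidate in the set $R$ of size $\mcO(\log N)$, that the $|X|\mu$ term comes from the final verification of \cite{Bealsetal} having to process every input generator rather than from expressing elements as words of length $\mcO(|X|)$, and that there is no \algSC step in the actual algorithm.
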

%
%
%

The black-box construction of a $3$-cycle -- one of the key ingredients of the algorithm -- is a surprisingly hard problem.
The solution lies in the combination of the following theoretical results,
which are also of independent interest.
The first allows us to find involutions with small support;
the second uses these to construct a $3$-cycle.

\begin{theorem} \label{thm:SmallSupport}
Let $9 \leq n \in \mathbb{N}$ and $G \in \{\Alt_n,\Sym_n\}$.
The proportion of elements $x \in G$ of even order
satisfying
$\big| \supp x^{|x|/2} \big| \le 4\sqrt{n}/3 $
is at least
$\left( 13 \log(n) \right) ^ {-1}$.
\end{theorem}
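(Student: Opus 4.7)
The plan is to exhibit, for each $a\ge 1$ with $2^a\le 4\sqrt n/3$, a combinatorially tractable family $\mathcal F_a\subseteq\Sym_n$ of elements with $|\supp(x^{|x|/2})|=2^a$, and sum the proportions over $a$. Take $\mathcal F_a$ to be the permutations whose cycle type is: one cycle of length $2^a$ together with any further cycles whose lengths all have $2$-adic valuation strictly less than $a$. Straightforward cycle arithmetic gives $v_2(|x|)=a$, shows $x^{|x|/2}$ fixes each low-$v_2$ cycle pointwise, and shows that on the length-$2^a$ cycle $x^{|x|/2}$ acts as a product of $2^{a-1}$ disjoint transpositions. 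Hence $|\supp(x^{|x|/2})|=2^a\le 4\sqrt n/3$ for every $x\in\mathcal F_a$.

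Writing $g_a(k)$ for the number of permutations of $k$ points whose cycles all have $2$-adic valuation $<a$, one gets $|\mathcal F_a|/n!=g_a(n-2^a)/(2^a(n-2^a)!)$. Splitting cycles by $2$-adic valuation and using $\sum_{u\text{ odd}}z^u/u=\tfrac12\log\tfrac{1+z}{1-z}$, the exponential generating function of $g_a$ factors as
\[
G_a(z)=\prod_{b=0}^{a-1}\!\left(\frac{1+z^{2^b}}{1-z^{2^b}}\right)^{1/2^{b+1}},
\]
which near $z=1$ behaves like $c_a(1-z)^{-(1-2^{-a})}$. Standard singularity analysis yields an explicit lower bound of the form $g_a(k)/k!\ge C_a\,k^{-2^{-a}}$, so that $|\mathcal F_a|/n!\ge C'_a\cdot 2^{-a}\,n^{-2^{-a}}$ (using $n-2^a\ge n/2$).

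The $\mathcal F_a$ are pairwise disjoint, so the target proportion in $\Sym_n$ is at least $\sum_{a=1}^{A_{\max}}C'_a\cdot 2^{-a}\,n^{-2^{-a}}$ with $A_{\max}=\lfloor\log_2(4\sqrt n/3)\rfloor$. Viewed in the variable $t=2^a$, the summand $t^{-1}n^{-1/t}$ is maximised near $t\approx\log n$, where $2^{-a}=\Theta(1/\log n)$ and $n^{-2^{-a}}$ is bounded below by a positive constant; a handful of dyadic terms around the maximum therefore give a bound of order $1/\log n$. The same bound transfers to $\Alt_n$ since for each $a\ge 2$ the parity of the auxiliary low-$v_2$ cycles can be adjusted freely, placing roughly half of $\mathcal F_a$ in $\Alt_n$ (the family $\mathcal F_1$ lies entirely outside $\Alt_n$ and is dropped). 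The principal obstacle is quantitative: turning $\Omega(1/\log n)$ into the explicit $1/(13\log n)$ demands effective singularity-analysis estimates on $g_a(k)/k!$ uniform in $a$ (or an equivalent inductive combinatorial bound), together with some bookkeeping of the endpoint terms and, if necessary, a direct check for a small range of $n$.
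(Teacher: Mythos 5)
Your construction is sound, and it is in fact a sub-family of the one the paper uses: your $\mathcal{F}_a$ is precisely the $j=1$ part of the family behind the paper's $u_b(n)$ with $b=2^a$ (elements with $j$ cycles of length divisible by $b$ but not $2b$ and all remaining cycles of length not divisible by $b$), and your $g_a(k)/k!$ is exactly the proportion the paper calls $\vnot{s}{2^a}(k)$ of permutations with no cycle length divisible by $2^a$. The paper fixes $b$ at the two dyadic values near $\tfrac13\log n$ and $\tfrac23\log n$ and sums over $j$ (that is what its Lemma~\ref{la:tb} is for); you keep only $j=1$ and sum over $a$, relying on the dyadic terms with $2^a$ near $\log n$. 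That trade is legitimate and even avoids the induction of Lemma~\ref{la:tb}: a single term with $2^a\in(\tfrac12\log n,\log n]$ already gives a bound of order $1/\log n$.

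The genuine gap is the quantitative part, which is exactly what you defer. The theorem asserts the explicit bound $1/(13\log n)$ for every $n\ge 9$, so ``standard singularity analysis yields an explicit lower bound of the form $g_a(k)/k!\ge C_a k^{-2^{-a}}$'' does not finish the job: singularity analysis gives asymptotics, whereas what is needed is an effective inequality with explicit $C_a$, uniform in $a$ with $2^a$ as large as $\Theta(\log n)$ and valid at $k=n-2^a$ for all relevant $n$; the paper imports precisely such a bound, \eqref{eq:sb}, from Beals et al., and then spends Lemma~\ref{la:ub} on monotonicity arguments to extract the constant. The same issue affects your transfer to $\Alt_n$: ``roughly half of $\mathcal{F}_a$ lies in $\Alt_n$'' must be made effective (the paper uses the $(1\pm 1/(b-1))$ bounds on the even/odd split in Lemma~\ref{la:utb}), and note your parity argument needs the complement of the $2^a$-cycle to be an odd permutation, not merely adjustable. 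Finally, the constant $13$ is tight enough that small $n$ cannot be waved away: for $n\le 403$ the relevant $2^a$ is $2$ or $4$, where $\Gamma(1-2^{-a})$ and the parity-split loss are worst, and the paper has to check $36\le n\le 403$ numerically and $9\le n\le 35$ by exhaustive conjugacy-class computation. Until you supply an effective version of the singularity-analysis step (or cite one), track the constants through the dyadic term(s) you keep, and handle $9\le n\le 403$, what you have is a correct skeleton with the decisive estimates missing.
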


\begin{theorem} \label{thm:InvToCyc}
Let $7 \le n \in \mathbb{N}$, $G \in \{\Alt_n,\Sym_n\}$ and $1 \le k \le 2\sqrt{n}/3$.
Let $s \in G$ be an involution moving $2k$ points.
\begin{enumerate}
\item The proportion of elements $r$ in the conjugacy class $s^G$
such that $r$ and $s$ move exactly one common point is at least $10/(3n)$.
\item Let $M$ be the set of elements in $s^G$ not commuting with $s$.
The proportion of
elements $r$ in $M$ such that $(sr)^2$ is a $3$-cycle
is at least $1/3$.
\end{enumerate}
\end{theorem}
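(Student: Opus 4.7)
The idea is to count the involutions $r$ in $s^G$ with $|\supp r\cap\supp s|=1$ directly, working first in $\Sym_n$.  The class has size $\binom n{2k}\,(2k-1)!!$, and such an $r$ is obtained by choosing the common point in $2k$ ways, the other $2k-1$ points of $\supp r$ from the $n-2k$ points outside $\supp s$ in $\binom{n-2k}{2k-1}$ ways, and pairing $\supp r$ into $k$ transpositions in $(2k-1)!!$ ways.  The resulting proportion simplifies to
\[
\frac{(2k)^2}{n}\prod_{i=0}^{2k-2}\frac{n-2k-i}{n-1-i}.
\]
Each factor in the product is at least the smallest one, $(n-4k+2)/(n-2k+1)=1-(2k-1)/(n-2k+1)$; the hypothesis $k\le 2\sqrt n/3$ ensures that $(2k-1)/(n-2k+1)$ is bounded by roughly $4/(3\sqrt n)$, so the product of $2k-1$ such factors stays bounded below by a fixed positive constant.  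Combined with $(2k)^2/n\ge 4/n$, this yields the required $10/(3n)$.  In $\Alt_n$ the $\Sym_n$-class does not split---its cycle type $2^k 1^{n-2k}$ has repeated part lengths---so the same bound applies verbatim.

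\textbf{Plan for Part 2.}
I would first verify that $|\supp r\cap\supp s|=1$ forces $(sr)^2$ to be a $3$-cycle.  With $p$ the common point, $p'=s(p)$, $p''=r(p)$, one has $p'\notin\supp r$ and $p''\notin\supp s$; on $\{p,p',p''\}$ the product $sr$ is the $3$-cycle $(p,p'',p')$, while on the complement the elements $s$ and $r$ have disjoint supports, so $sr$ is a product of transpositions.  Hence $(sr)^2=(p,p',p'')$, and every intersection-one conjugate lies in $K:=\{r\in M:(sr)^2\text{ is a 3-cycle}\}$.

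For the ratio $|K|/|M|\ge 1/3$ I would analyse the two-coloured graph on the $n$-point set whose edges are the $s$- and $r$-transpositions.  Every vertex has degree at most one in each colour, so the components are isolated vertices, alternating paths, or alternating even cycles; from these one reads off the cycle type of $sr$ (a path on $v$ vertices yields a $v$-cycle, an alternating $2m$-cycle yields two $m$-cycles).  Consequently $(sr)^2$ is a single $3$-cycle iff the graph has exactly one three-vertex path component and every other component is of ``commuting type''---isolated vertex, single edge, doubled edge, or alternating $4$-cycle---and $r$ commutes with $s$ iff every component is of commuting type.  Writing both $|K|$ and $|s^G\setminus M|$ as explicit sums over the numbers of doubled edges, $4$-cycles, and isolated $s$- and $r$-edges, and controlling the combinatorial factors by $k\le 2\sqrt n/3$, a termwise comparison yields $|K|\ge|M|/3$.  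The main obstacle is precisely this final inequality: for $k$ close to $2\sqrt n/3$ a constant positive fraction of $s^G$ lies in $M$, so Part~1 alone is insufficient, and one must also exploit the doubled-edge contributions to $K$ (which correspond to intersections of size $3,5,\dots$) to close the gap.
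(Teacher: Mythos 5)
Your Part~1 starts exactly as the paper does: your count is the paper's $\trip(n,k)=\frac{4k^2(n-2k)!^2}{n!(n-4k+1)!}=\frac{4k^2}{n}\prod_{i=0}^{2k-2}\frac{n-2k-i}{n-1-i}$. But the estimation step, as written, does not give the claim. Bounding every factor by the smallest yields the product bound $\bigl(1-\tfrac{2k-1}{n-2k+1}\bigr)^{2k-1}$, whose worst case over the admissible range (attained when $k$ is near $2\sqrt n/3$) is about $e^{-16/9}\approx 0.17$, and combining that fixed constant with $(2k)^2/n\ge 4/n$ gives only roughly $0.7/n$, a factor of about $5$ short of $10/(3n)$. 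The missing idea is the trade-off between the two factors: the product is small only when $k$ is large, and then $4k^2/n$ is large (up to $16/9$); when $k$ is small the prefactor $4k^2/n$ is close to $4/n$ but the product is close to $1$. This balancing is precisely what the paper has to work for: Lemma~\ref{la:TripConcave} shows $\trip(n,k+1)/\trip(n,k)$ is decreasing in $k$ (via a Sturm-sequence argument plus finitely many checks), reducing to the endpoints $k=1$ and $k=\lfloor 2\sqrt n/3\rfloor$, and the large endpoint is then handled through monotonicity in $n$, Lemma~\ref{la:MonoPower} and $k\ge 4$, with explicit verification for the remaining small $n$. Your sketch replaces all of this by an assertion that two bounds which are numerically incompatible "yield the required $10/(3n)$", so there is a genuine gap here (and no provision for the small-parameter cases that even the paper must check by hand).

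In Part~2 your preliminary observations are fine (intersection exactly one forces $(sr)^2$ to be a $3$-cycle, and the coloured-graph classification of components is correct), but the decisive inequality $|K|\ge |M|/3$ is exactly the step you do not carry out: "writing both $|K|$ and $|s^G\setminus M|$ as explicit sums \dots\ a termwise comparison yields" is a plan, not an argument, and you yourself flag it as the main obstacle. Moreover, your diagnosis that one \emph{must} exploit the doubled-edge contributions to $K$ is contradicted by the paper's proof: it bounds the conditional proportion below by $|T|/(\inv(n,k)-|C|)$, where $T$ is the intersection-one set and $C$ the disjoint-support set, i.e.\ it uses only the elements from Part~1 in the numerator and discards only the disjoint conjugates from the denominator, and then shows this ratio is at least $1/3$ for the whole admissible range with $n\ge 10$ by reducing to inequality~\eqref{eq:ConBound} and analysing the function $g(n,k)$ (again with Lemma~\ref{la:MonoPower} and finitely many case checks); only for $n\le 9$ does it compute the exact proportion, where the richer contributions you mention do matter. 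So the quantitative heart of Part~2 — the analogue of that analysis, however you organise the count — is missing from your proposal.
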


The constructive recognition algorithm for alternating and symmetric
groups described in \cite{Bealsetal} consists of two parts: the
construction of standard generators assuming the degree is known, and
the algorithmic construction of the inverse of the isomorphism
$\lambda: G \rightarrow \Alt_n$. The contribution of this paper is to
replace the first part by an algorithm determining the degree and
finding the standard generators simultaneously. Together with the
second part of \cite{Bealsetal}, this establishes the algorithm for
the main theorem above.
If one is interested in recognising the symmetric group rather than
the alternating group, the remarks of \cite{Bealsetal} apply and the
same complexity is achieved.

Our algorithm has been implemented in the computer algebra system \textsf{GAP} \cite{GAP}.
Comparisons of our implementation with the \textsf{GAP} implementation of the first part
of \cite{Bealsetal} show that our algorithm is a significant improvement.
Given as input a black-box group isomorphic to a symmetric or alternating
group, the new algorithm  establishes this fact and
determines the degree of the group
in about the same time that the old algorithm requires to decide
whether the input group is isomorphic to an alternating or symmetric
group of the specific degree given as part of the input.
In general, the old algorithm has to be run several times
to find the degree of the input group. Therefore, the new algorithm wins out
by a factor determined by the number of putative degrees
the old algorithm has to test.
The scope of our implementation depends on many factors, in particular
the way the group is represented. To give a very rough indication,
in the natural permutation representation
the present implementation can deal with degrees of around 10000.

In applications in the matrix group recognition project it is imperative that
the algorithm report failure quickly when the input group is not isomorphic
to an alternating nor a symmetric group. We tested the performance of our
algorithm when handed some examples of almost simple groups which are not
alternating or symmetric. In all these examples our algorithm reported failure
extremely fast.
This is mainly due to finding an element of order not existing in the symmetric
group of degree $N$, thus even proving that the group cannot be of the specified isomorphism
types (cf. remark after Algorithm~\ref{alg:tc}).

The practical performance of our algorithm exceeds its predicted performance as
the constants in our estimates of proportions of elements are too conservative,
notably in the proportion proved in Theorem~\ref{thm:SmallSupport}.
Further improvements of the performance could be achieved in situations where
an order oracle is available by lowering the a priori upper bound $N$.

As E.~O'Brien pointed out, our algorithm can also be applied to decide
whether the input group $G$ is a central extension of some (not
necessarily finite) abelian group by $\Alt_n$ or $\Sym_n$ by working with
$G/Z(G)$ as black-box group.

Here is a short overview of this paper.
We fix some notation in Section~\ref{sec:prelims} and give an outline of the algorithm in Section~\ref{sec:outline}.
In Section~\ref{sec:algo} we describe the setup in detail and
prove Theorem~\ref{thm:main}.
Finally, in Section~\ref{sec:techno} we give proofs of Theorems~\ref{thm:SmallSupport} and~\ref{thm:InvToCyc},
along with proofs of some technical results which are used in the proof of Theorem~\ref{thm:main}.

\section{Preliminaries} \label{sec:prelims}

This paper describes a constructive recognition algorithm
which decides whether a given
black-box group is isomorphic to an alternating or a symmetric group.
The notion of when a black-box group is
\emph{constructively recognisable}  is
defined in~\cite[Definition 1.1]{Bealsetal}. In particular, we note that if our
algorithm concludes that a given black-box group $G$ is indeed
isomorphic to an alternating group $\Alt_n$ or a symmetric group $\Sym_n$
of some degree $n$, then it also determines an isomorphism
$\lambda: G\rightarrow \Alt_n$ or
$\lambda: G\rightarrow \Sym_n$ and a pair $\{s,t\}$ of  generators for
$G$, called the \emph{standard generators of $G$}.
We call $\lambda$ together with the standard generators $\{s,t\}$
a \emph{constructive isomorphism}.

The standard generators for $\Alt_n$ chosen by the algorithm satisfy
the following presentations given by Carmichael~\cite{Carm}:
\begin{equation} \label{eq:PresAltEven}
	\left\{ s,t ~\middle|~ s^{n-2}=t^3=(st)^{n-1} = (t^{(-1)^k}s^{-k}ts^k)^2 = 1
		~~\text{for}~~
	1 \leq k \leq \frac{n-2}{2} \right\}
\end{equation}
for even $n > 3$ and
\begin{equation} \label{eq:PresAltOdd}
	\left\{  s,t ~\middle|~ s^{n-2}=t^3=(st)^{n} = (ts^{-k}ts^k)^2 = 1
		~~\text{for}~~
	1 \leq k \leq \frac{n-3}{2} \right\}
\end{equation}
for odd $n > 3$.

Examples of standard generators for $\Alt_n$ are $s
= (1,2)(3,4,\ldots, n)$ and $t = (1,2,3)$ for $n$ even, and $s = (3,4,\ldots,n)$ and $t = (1,2,3)$ for $n$ odd.

Our algorithm exploits information gained by considering the
cycle types of  permutations in symmetric groups.
Recall that  the \emph{cycle   type} of an element $g\in \Sym_n$ is
defined as $1^{a_1}\cdots n^{a_n}$ if $g$ contains
$a_i$ cycles of length $i$ for $1 \le i \le n.$
Note that for $n \ge 7$ we have $\Aut( \Alt_n ) = \Sym_n$,
so the cycle type is preserved by all automorphisms of $\Alt_n$.
Thus,  if $G$ is isomorphic to $\Alt_n$ or $\Sym_n$,
the  cycle type  of $\lambda(g)$ is independent of the choice of
isomorphism $\lambda$ from $G$ to $\Alt_n$ or $\Sym_n$.
This allows us to generalise the notion of cycle type to elements of
a black-box group $G$ isomorphic to $\Alt_n$ or $\Sym_n$.

During the course
of the algorithm, we may encounter subgroups $\Alt_k$
of~$\Alt_n$. For $k\ge 7$ and $k$ odd, given a $3$-cycle
$c \in \Alt_n$   we say that
a $k$-cycle  $g$ \emph{matches} $c$ if $\{gc^2, c\}$ are standard
generators for $\Alt_k$.
Note that in this case $g$ must be of the form $(u,v,w,\ldots)$, where
$c = (u,v,w)$ for $u, v, w \in \{1,\ldots, n\}$.

Let $\pi \in \Sym_n$.
Call a point $i$ with  $1 \le i \le n$ a \emph{moved point} of $\pi$
if $i^\pi \neq i$.
Call the set of moved points of $\pi$ the \emph{support} of $\pi$,
denoted $\supp \pi$.
Similarly, denote by $\fix \pi$ the set of fixed points of $\pi$, that
is $\{1,\ldots,n\} - \supp \pi$.

\section{Brief outline of the algorithm}\label{sec:outline}

We describe a one-sided Monte-Carlo algorithm which takes as input a
black-box group $G$, a real number $\varepsilon$ with
$0<\varepsilon < 1$ and a positive integer~$N$.
 The aim of the algorithm is to determine whether
there is an integer $n$ with $9\le n\le N$ such that $G$ is isomorphic
to $\Alt_n$ or
$\Sym_n$. In the following we describe the main steps of our
algorithm. We present this description under the assumption that the
algorithm is given a black-box group $G$ which is indeed isomorphic
via the unknown isomorphism $\lambda$ to
$\Alt_n$ or $\Sym_n$ for some $n\le N$ and describe the types of elements
in $G$ we seek to establish this fact. If the algorithm is handed
a black-box group not isomorphic to an alternating or symmetric
group, then one of the subsequent steps will fail to find the required
elements and the algorithm reports failure.

The algorithm consists of three main steps.
In the first step we compute a subset $R \subseteq G$ which contains a
$3$-cycle with  high probability. The details are presented in
Algorithm \algTC in
Section~\ref{sec:tc}. If no such
set $R$ was found, then we conclude that $G$ is not isomorphic to
$\Alt_n$ or $\Sym_n$ for any $n$ with $9 \le n\le N$ and terminate.

The second step  repeats the following basic step for each element
$c\in R$.
We may assume without loss of generality
that $\lambda(c) = (1,2,3)$ and we seek a $k$-cycle $g$ matching $c$
such that $k \ge 3n/4$.
The construction of $g$ is described in Algorithm \algLC in
Section~\ref{sec:lc}.
If no such
element $g$ was found, then we discard $c$ as a putative $3$-cycle and continue
with the next candidate for $c$ in $R$.
Otherwise, without loss of
generality,  we may assume that $\lambda(g) = (1,2,\ldots,k)$.

The third step, described in
Algorithm \algGens\ in
Section~\ref{sec:mainalg},
determines the degree $n$.
This step repeats a basic step which
computes random conjugates $r = g^x$ of $g$ for $x\in G$.
Note that by now we have derived some partial information about
$\lambda$, namely  $\lambda(c) = (1,2,3)$  and $\lambda(g) = (1,2,\ldots,k)$.
This allows us to decide whether
$\supp \lambda(g^x)$ contains hereto unseen points in which case
the basic step replaces $g$ by an element $g'$ such that
$\lambda(g') = (1,2,\ldots,\ell)$ for some $\ell > k.$
The third step repeats this basic step until it obtains an
$n$- or an $(n-1)$-cycle and constructs the standard generators for
$G$ from these.

Finally, we use methods from~\cite{Bealsetal} to
check whether we have found standard generators and compute
a constructive isomorphism.

\section{Details of the algorithm}\label{sec:algo}

In this section, the steps of the algorithm are described in detail.
Each step in turn is broken down into
one or more procedures. Each procedure is
designed to accept an arbitrary black-box group as input, which forces
the output to be fairly generic.
Therefore each procedure has an accompanying lemma which gives an interpretation
of the output if the input is in fact a symmetric or alternating group.
A second lemma determines the complexity, which is valid for arbitrary black-box
groups as input.

\subsection{Construction of possible $3$-cycles} \label{sec:tc}

The following algorithm constructs a set of putative $3$-cycles.
It is based on the simple observation that the product of two involutions
$t_1, t_2$ 
with $|\supp(t_1) \cap \supp(t_2)| = 1$ squares to a $3$-cycle.

\begin{algorithm}[\algTC] \label{alg:tc}
\textit{Input:} A group $G$, a real number
$0 < \veps < 1$ and $N \in \mathbb{N}$.\\
\textit{Output:} A set $R \subset G$ or \textsc{fail}.\\
\textit{Algorithm:}
\begin{enumerate}
\item \label{step:TCConsts}
Let $ M := \prod_{p} p^{\lfloor \log_p(N) \rfloor}$,
where the product is over all odd primes $p$ with $p \le N$.
Let $B := \lceil 13 \log(N) \log\bigl( 3/\veps \bigr) \rceil$,
$T := \lceil 3 \log\bigl( 3/\veps \bigr) \rceil$
and $C := \bigl\lceil 3NT/5 \bigr\rceil$.

\item \label{step:TCRndElts}
Choose $B$ random elements $r_1,\ldots,r_B \in G$
and set $t_i := r_i^M$ for $1 \leq i \leq B$.

\item \label{step:TCReduceTwoPart}
For each $t_i$, if there is a smallest $a \in \N$ such that $t_i^{(2^a)} = 1_G$ and
$a - 1 \le \log_2(N)$, then replace $t_i$ by $t_i^{(2^{a-1})}$.
Otherwise return \textsc{fail}.

\item \label{step:TCNonComm}
For each $t_i$ set $\Gamma_i := \emptyset$.
Repeat the following step at most $C$ times:
Choose a random conjugate $c$ of $t_i$.
If $t_ic \neq ct_i$ and $| \Gamma_i | < T$, then add $c$ to $\Gamma_i$.

\item \label{step:TCReturn}
Return $\bigcup_{i = 1}^B \{(t_ic)^2\,:\, c \in \Gamma_i\}$.
\end{enumerate}
\end{algorithm}

Note that if the algorithm returns \textsc{fail}, then  Step~3 has found
an element $g\in G$ such that $|g|$ cannot be the order of any element in
any group $\Sym_n$ for $n \le N$. Hence
$G$ is proven not to be isomorphic to
$\Alt_n$ or $\Sym_n$ for any $n \le N$.

\begin{lemma} \label{la:TCOutput}
Let $9 \le N \in \mathbb{N}$, $0 < \veps < 1$ and $G \in \{ \Sym_n, \Alt_n \}$ for some $9 \le n \le N$.
A call to Algorithm $\algTC(G,\veps, N)$ returns a subset $R$ of~$G$
and, with probability at least $1-\veps$,
$R$ contains a $3$-cycle in~$G$.
Moreover,
$|R| \le \lceil 13 \log(N) \log\bigl( 3/\veps \bigr) \rceil
\cdot \lceil 3 \log\bigl( 3/\veps \bigr) \rceil$.
\end{lemma}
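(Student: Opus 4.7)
The plan is to establish the size bound $|R|\le BT$, which is immediate from $|\Gamma_i|\le T$, and then to bound the failure probability by $\veps$ via a two-stage argument that matches the two halves of the algorithm and invokes Theorems~\ref{thm:SmallSupport} and~\ref{thm:InvToCyc} in turn. The target probability $\veps$ is distributed as $\veps/3+\veps/3+\veps/3$ across the three failure modes: no small-support involution is produced, too few non-commuting conjugates of such an involution are collected, and the collected conjugates fail to yield a $3$-cycle.

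For the first stage, I would analyse Steps~2 and~3. Since $M$ is odd and divisible by the odd part of $|\sigma|$ for every $\sigma\in\Sym_N$, writing $r_i=\sigma\tau$ with $\sigma$ of odd order and $\tau$ of $2$-power order (commuting), one has $r_i^M=\tau^M$, which still generates $\langle\tau\rangle$. Hence $r_i^M$ has $2$-power order at most $N$, Step~3 does not return \textsc{fail}, and it replaces $t_i$ by the unique involution $r_i^{|r_i|/2}$ of $\langle\tau\rangle$ whenever $r_i$ has even order. Theorem~\ref{thm:SmallSupport} then gives that each independent $r_i$ produces a $t_i$ with $|\supp t_i|\le 4\sqrt{n}/3$ with probability at least $1/(13\log n)\ge 1/(13\log N)$, so the choice of $B$ makes the probability that no $r_i$ is such a ``small-support'' involution at most $\exp(-B/(13\log N))\le\veps/3$.

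For the second stage, conditional on some index $i$ for which $t_i$ is a small-support involution, I would analyse Steps~4 and~5. The pivotal observation is that if $c\in t_i^G$ satisfies $|\supp c\cap\supp t_i|=1$, then $c$ does not commute with $t_i$ and a direct cycle computation shows $(t_ic)^2$ is a $3$-cycle; by Theorem~\ref{thm:InvToCyc}(1) each random conjugate is such a ``good'' element with probability at least $10/(3n)\ge 10/(3N)$. Failure of Steps~4--5 splits into (a) $|\Gamma_i|<T$, which -- since every good conjugate is non-commuting and would have been added to $\Gamma_i$ -- implies that no good conjugate appeared in any of the $C$ trials, an event of probability at most $(1-10/(3N))^C\le \exp(-2T)$ by the choice of $C$; and (b) $|\Gamma_i|=T$, in which case $\Gamma_i$ consists of $T$ i.i.d.\ uniform samples from the non-commuting conjugates and Theorem~\ref{thm:InvToCyc}(2) gives a per-sample failure probability of at most $2/3$, so the joint failure probability is at most $(2/3)^T$. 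The choice of $T$ makes each of these at most $\veps/3$ (the binding term being $(2/3)^T\le\veps/3$, which requires $T\ge \log(3/\veps)/\log(3/2)$, comfortably satisfied by $T\ge 3\log(3/\veps)$), and summing over the three failure modes bounds the total failure probability by $\veps$.

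The main obstacle will be the algebraic identification of $t_i$ after Step~3 with the involution $r_i^{|r_i|/2}$, so that Theorem~\ref{thm:SmallSupport} applies directly to the output of the algorithm rather than to the raw random element $r_i$; together with the pivotal observation that a single-common-moved-point conjugate is automatically non-commuting, which is precisely what lets the otherwise delicate Chernoff-type tail bound on the number of non-commuting conjugates in Step~4 be replaced by the clean geometric estimate $\exp(-2T)$ on the probability that no good conjugate ever appears.
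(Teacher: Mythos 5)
Your proof is correct, and in outline it mirrors the paper's: the size bound from $|\Gamma_i|\le T$, an $\veps/3$--$\veps/3$--$\veps/3$ split, Theorem~\ref{thm:SmallSupport} (via the choice of $B$) for producing a $k$-involution with $k\le 2\sqrt{n}/3$, and Theorem~\ref{thm:InvToCyc}(2) with $T=\lceil 3\log(3/\veps)\rceil$ for extracting a $3$-cycle from $T$ non-commuting conjugates. The one genuinely different step is the middle event. The paper invokes Corollary~\ref{cor:EnoughNonComm}, i.e.\ a Chernoff-bound argument (Lemma~\ref{la:chernoff} with $\delta=1/2$ applied to the $10/(3n)$ proportion from Theorem~\ref{thm:InvToCyc}(1)) to guarantee that the $C$ trials yield at least $T$ non-commuting conjugates with probability $\ge 1-\veps/3$, and then treats $|\Gamma_i|<T$ as failure outright. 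You instead observe that if $|\Gamma_i|<T$ at the end, then every non-commuting conjugate encountered was added to $\Gamma_i$; since a conjugate sharing exactly one moved point with $t_i$ is automatically non-commuting and already gives a $3$-cycle in $R$, failure together with $|\Gamma_i|<T$ forces all $C$ independent conjugates to miss the single-common-point event, giving the geometric bound $(1-10/(3N))^C\le e^{-2T}\le\veps/3$. Both arguments rest on the same two ingredients (Theorem~\ref{thm:InvToCyc}(1) and the parameter $C=\lceil 3NT/5\rceil$); yours is more elementary (no Chernoff) and slightly sharper for this step, while the paper's is more modular, cleanly separating ``collect $T$ non-commuting conjugates'' (Corollary~\ref{cor:EnoughNonComm}) from ``one of them works'' (Corollary~\ref{cor:TCgivenNC}), which matches how the algorithm and its auxiliary corollaries are organised. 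Your conditioning in mode (b) -- that the accepted conjugates, given $|\Gamma_i|=T$, are i.i.d.\ uniform on the non-commuting class elements -- is the standard rejection-sampling fact also used implicitly by the paper, and your identification of the Step~\ref{step:TCReduceTwoPart} output with $r_i^{|r_i|/2}$ is correct (indeed a bit more explicit than the paper's treatment).
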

\begin{proof}
Note that $M$ is an odd integer and that
for every $g \in G$ the element $g^M$ has even order or is trivial.
Therefore,
by Corollary~\ref{cor:SmallSuppElts}, with probability
at least $1-{\veps}/{3}$ one of the $t_i$ constructed in Step~\ref{step:TCRndElts}
has even order such that $t := t_i^{|t_i|/2}$
is a product of $k$ disjoint transpositions with
$k \le \lfloor \max\{2\sqrt{n}/3,2\}\rfloor$.
Let $X$ be a list of $C$ random conjugates of $t$. Then, with
probability at least $1-\veps/3$,
$X$ contains at least $T$ elements
which do not commute with $t$ by Corollary~\ref{cor:EnoughNonComm}.
Now let $\Gamma$ be a list of $T$ random conjugates of $t$ not commuting
with $t$. By Corollary~\ref{cor:TCgivenNC}
there is, with probability at least $1-\veps/3$, an element $c \in \Gamma$
such that $(tc)^2$ is a $3$-cycle.
Thus, with probability at least $(1-\veps/3)^3 \ge 1 - \veps$, the set
$R$ contains a $3$-cycle.
Since after Step~\ref{step:TCNonComm} we have $|\Gamma_i| \le T$, clearly
$|R| \le T\cdot B$ holds. This implies the claimed bound for $|R|$.
\end{proof}

\begin{lemma} \label{la:TCRuntime}
Let $G$ be a finite group, $0 < \veps < 1$ and $N \in \mathbb{N}$.
Then \algTC with input $G,\veps,N$ runs in
$\mcO ( N \log(N)^2 \log(\veps^{-1})^2 ( \mu + \rho ) )$
time and requires storage of
$\mcO ( \log( N ) \log( \veps^{-1} )^2 )$
group elements.
\end{lemma}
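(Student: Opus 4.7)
The plan is to bound the cost of each step of Algorithm~\ref{alg:tc} separately in terms of $\mu$ and $\rho$ and then sum. The main preliminary estimate is a bound on $\log M$: since $M$ is a product over the odd primes $p\le N$ of factors $p^{\lfloor\log_p N\rfloor}\le N$, and there are at most $N$ such primes, the elementary bound $\log M = \mcO(N\log N)$ follows. Raising a group element to the power $M$ by fast binary exponentiation therefore costs $\mcO(\log M) = \mcO(N\log N)$ group multiplications. I would also record the sizes $B = \mcO(\log(N)\log(\veps^{-1}))$, $T = \mcO(\log(\veps^{-1}))$ and $C = \mcO(N\log(\veps^{-1}))$, noting that $\log(3/\veps) \ge \log 3 > 1$ for every $\veps\in(0,1)$, so all of these logarithmic factors are bounded below by a positive constant.

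I would then walk through the four computational steps in turn. Step~\ref{step:TCRndElts} draws $B$ random elements (cost $\mcO(B\rho)$) and computes $B$ powers $r_i^M$, totalling $\mcO(B\log(M)\mu) = \mcO(N\log(N)^2\log(\veps^{-1})\mu)$. Step~\ref{step:TCReduceTwoPart} requires at most $\lfloor\log_2 N\rfloor+1$ successive squarings per $t_i$, contributing $\mcO(B\log(N)\mu)$. Step~\ref{step:TCNonComm} is the dominant random-sampling step: for each of the $B$ elements it performs up to $C$ trials, and each trial produces one random element (cost $\rho$) plus a constant number of group multiplications (forming a conjugate of $t_i$ and testing commutativity), for a per-trial cost of $\mcO(\mu+\rho)$. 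This sums to $\mcO(BC(\mu+\rho)) = \mcO(N\log(N)\log(\veps^{-1})^2(\mu+\rho))$. Step~\ref{step:TCReturn} performs at most $BT$ additional squarings, for a negligible $\mcO(BT\mu)$.

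Summing the four contributions and using $\log(3/\veps)\ge 1$, the bounds from Step~\ref{step:TCRndElts} and Step~\ref{step:TCNonComm} dominate and are both absorbed by $\mcO(N\log(N)^2\log(\veps^{-1})^2(\mu+\rho))$, giving the stated runtime. For the storage count, at any moment the algorithm holds the $B$ elements $t_i$, the sets $\Gamma_i$ of at most $T$ elements each, and (at the end) the output $R$ of at most $BT$ elements; the total is $\mcO(BT) = \mcO(\log(N)\log(\veps^{-1})^2)$ group elements, matching the bound in the lemma.

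The only mildly delicate ingredient is the estimate for $\log M$: a sharper $\log M = \mcO(N)$ would follow from Chebyshev's bound on $\psi(N)$, but the complexity stated in the lemma is loose enough that the elementary $\log M = \mcO(N\log N)$ suffices, so no analytic number theory is required and the remainder of the argument is routine bookkeeping.
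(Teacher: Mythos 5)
Your proposal is correct and follows essentially the same route as the paper: bound $\log M = \mcO(N\log N)$ (the paper writes $M \le N^N$), charge $\mcO(N\log N)$ multiplications per $M$-th power via square-and-multiply, and then do the same step-by-step accounting with $B$, $C$, $T$, with Steps~\ref{step:TCRndElts} and~\ref{step:TCNonComm} dominating and storage $\mcO(BT)$. No substantive differences.
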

\details{actual complexity is
$\mcO( N \log(N) \log( \veps^{-1} ) ( \log(N) + \log(\veps^{-1}) ) \mu + N \log(N) \log( \veps^{-1} )^2 \rho )$}
\begin{proof}
Since $M < \prod_{2 < p \le N} N \le N^N$, computing the $M$-th power
of a group element  with a square-and-multiply algorithm requires $\mcO (
N \log ( N ) )$ group operations.
In Step~\ref{step:TCRndElts}  we construct $B$ random elements and
compute their $M$-th power.
We compute $t_i^{(2^a)}$ by repeated squaring, ensuring that $a-1 \leq \log_2(N)$, thus
Step~\ref{step:TCReduceTwoPart} can be performed in $B \cdot \log_2(N)$ group operations.
Step~\ref{step:TCNonComm} requires $B\cdot C$ random elements and
$\mcO( B \cdot C )$
group operations;
likewise, Step~\ref{step:TCReturn} requires $\mcO( B \cdot T )$ group
operations.
Thus, the total runtime of the algorithm is $\mcO ( N \log(N)^2
\log(\veps^{-1})^2 ( \mu + \rho ) )$.

Clearly, we only need to store
$\mcO ( \log( N ) \log( \veps^{-1} )^2 )$ elements overall, concluding the proof.
\details{
Cost for step
\begin{itemize}
\item \ref{step:TCRndElts}: $\mcO ( N \log(N)^2 \log( \veps^{-1} ) \mu +  \log(N)\log(\veps^{-1}) \rho )$
\item \ref{step:TCReduceTwoPart}: $\mcO ( \log(N)^2\log\veps^{-1} \mu )$
\item \ref{step:TCNonComm}: $\mcO ( N \log( N ) \log(\veps^{-1})^2 ( \mu + \rho ) )$
\item \ref{step:TCReturn}: $\mcO ( \log( N ) \log( \veps^{-1} )^2 \mu )$
\end{itemize}
}
\end{proof}

\subsection{Construction of a matching cycle} \label{sec:lc}

The aim of this section is, given a $3$-cycle $c$ in a black-box group
$G$ isomorphic to an alternating or symmetric group of degree $n$,
to construct a $k$-cycle~$g$ matching $c$ with $k \ge 3n/4.$
The proportion of cycles with this property
is too small for our purposes, so we consider
other types of elements in $G$ which occur more
frequently and allow the  construction of a $k$-cycle $g$ with the
desired properties. As a first step, we describe what we call bolstering
elements. These allow us to construct the desired cycle $g$ easily.
Since bolstering elements are still too rare, we consider pre-bolstering
elements from which we obtain bolstering elements in turn.

\subsubsection{Bolstering Elements} \label{sec:bolster}

Let $c := (u,v,w)$ be a $3$-cycle.
Call an element $x \in \Sym_n$ \emph{bolstering} with respect to
$c$ if it is of the form
$x = (v,a_1,\ldots,a_{\alpha})(w,b_1,\ldots,b_{\beta})(\ldots)$
or
$x = (v,a_1,\ldots,a_{\alpha},w,b_1,\ldots,b_{\beta})(\ldots)$
with $u \in \fix x$ and $\alpha,\beta \ge 2$.

\begin{remark} \label{rem:BE2LC}
Given a bolstering element $x$ with respect to $c$,
we can find a cycle $g$ matching $c$.
Let $m := \min\{\alpha,\beta\}$ and
$m' := \lfloor |\alpha - \beta|/2 \rfloor$.
\begin{enumerate}
\item $c \cdot c^x \cdot c^{(x^2)} \cdots c^{(x^m)} =: y$ is a single
  cycle of length $2m+3$.
\item If $\alpha \le \beta - 2$, we can compute $z =
  (u,b_{\alpha+2},b_{\alpha+1})$.
Then
$m'$ is the least positive integer such that $z^{(x^{2m'})}c$
does not have order $5$ and
$y \cdot z \cdot z^{(x^2)} \cdot z^{(x^4)} \cdots
z^{(x^{2(m'-1)})} =: g$ is a cycle of length $2m'+2m+3$.
\item If $\beta \le \alpha - 2$, we compute $z := (u,a_{\beta+1},a_{\beta+2})$
to obtain a $(2m'+2m+3)$-cycle in similar fashion.
\end{enumerate}
The details of how to compute $z$ will be described in Algorithm~\algBC.
\end{remark}

Since the proportion of bolstering elements with respect to a given
$3$-cycle
in $\Alt_n$ and $\Sym_n$
is too small, we instead try to find pre-bolstering elements and use
these to construct bolstering elements.

An element $r$ is called \emph{pre-bolstering} with respect to $c$ if it is of the form
\[
    r = (w,u,a_1,\ldots,a_{\alpha})(v,b_1,\ldots,b_{\beta})(\ldots)
\]
or
\[
    r = (w,u,a_1,\ldots,a_{\alpha},v,b_1,\ldots,b_{\beta})(\ldots)
\]
with $\supp c = \{u,v,w\}$ and $\alpha,\beta \ge 2$.
Note that if $r$ is pre-bolstering, then either $x=cr$ or $x=c^2r$ is
bolstering with respect to $c$.

The next lemma gives a criterion when an element $r\in \Sym_n$ is
pre-bolstering with respect to a 3-cycle $c$.

\begin{lemma} \label{la:PreBols}
Let $c\in \Sym_n$ be a $3$-cycle. Then $r$ is pre-bolstering with respect
to $c$ if and only if
$[c^r,c] \neq 1_G$, $c^{(r^2)} \not\in \{c,c^2\}$ and
$[c,c^{(r^2)}] = 1_G$.
\end{lemma}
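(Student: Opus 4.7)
The plan is to translate the three algebraic conditions into combinatorial constraints on the action of $r$ on $T := \supp c$, and then recover the pre-bolstering form of $r$ from those constraints. The elementary fact I would use is that two $3$-cycles in $\Sym_n$ commute if and only if they coincide, are mutually inverse, or have disjoint supports. Since $c^r$ and $c^{r^2}$ are $3$-cycles with supports $T^r$ and $T^{r^2}$, condition (i) becomes $|T \cap T^r| \in \{1,2\}$, while conditions (ii) and (iii) together force $T \cap T^{r^2} = \emptyset$. The forward direction then reduces to a direct inspection of the two pre-bolstering forms: in each form the only point of $T \cap T^r$ is $u = w^r$, and from $\alpha,\beta \geq 2$ the three images $u^{r^2}, v^{r^2}, w^{r^2}$ all lie outside $T$.

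For the converse I would first rule out $|T \cap T^r| = 2$. In that case exactly two elements of $T$ map into $T$, giving six possible ordered patterns for the restriction of $r$; checking them one by one, each pattern either fixes a point of $T$ or sends a point of $T$ back into $T$ under $r^2$, contradicting $T \cap T^{r^2} = \emptyset$. Hence $|T \cap T^r| = 1$. Let $w \in T$ be the unique element with $w^r \in T$, set $u := w^r$, and let $v$ be the third element of $T$. The same disjointness $T \cap T^{r^2} = \emptyset$ now forces $u \neq w$ (else $w$ is fixed and $w^{r^2} = w \in T$), $u^r \notin T$, $v^r \notin T$, and $v$ not fixed by $r$.

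It only remains to read off the pre-bolstering form from the $r$-cycles through $w$ and $v$. If these cycles are distinct, the $r$-cycle of $w$ has the shape $(w, u, a_1, \ldots, a_\alpha)$ and that of $v$ has the shape $(v, b_1, \ldots, b_\beta)$, and demanding $u^{r^2}, v^{r^2} \notin T$ forces $\alpha \geq 2$ and $\beta \geq 2$, giving form $1$. If the two cycles coincide, the cycle has shape $(w, u, a_1, \ldots, a_\alpha, v, b_1, \ldots, b_\beta)$, and the same two tracking computations again yield $\alpha,\beta \geq 2$, giving form $2$. The main point requiring care is this last case: one must verify that $v$ can be neither the immediate successor of $u$ nor the immediate predecessor of $w$ inside the single cycle without producing an element of $T \cap T^{r^2}$, and this is precisely the arithmetic that simultaneously pins down both bounds $\alpha \geq 2$ and $\beta \geq 2$.
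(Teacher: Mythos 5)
Your proof is correct and follows essentially the same approach as the paper: both arguments reduce the commutator conditions to statements about $\supp c \cap \supp c^{r}$ and $\supp c \cap \supp c^{(r^2)}$ (via the fact that two $3$-cycles commute iff their supports are equal or disjoint) and then analyse how $r$ acts on $\supp c$. The only difference is presentational: you prove the converse directly by reconstructing the two cycle forms and ruling out $|\supp c \cap \supp c^{r}|=2$ and the cases $\alpha\le 1$, $\beta\le 1$ explicitly, whereas the paper argues by contraposition over the ways $r$ can fail to be pre-bolstering.
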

\begin{proof}
Clearly, if $r$ is pre-bolstering, then the conditions hold.
Conversely, suppose that $r$ is not pre-bolstering.
Then either $\supp c^r \cap \supp c = \emptyset$ or
$\fix r \cap \supp c \neq \emptyset$ or $\min\{ \alpha, \beta \} < 2$.
In the first case we find $[c^r,c]=1_G$.
In both the second and the third case, clearly $\supp c \cap \supp
c^{(r^2)} \neq \emptyset$,
thus either $[c,c^{(r^2)}] \neq 1_G$ or $\supp c = \supp c^{(r^2)}$ hold.
(Note that if the supports of $c$ and $c^{(r^2)}$ coincide, then
$c^{(r^2)} = c$ or $c^{(r^2)} = c^2$.)
\end{proof}

For a group $G$ isomorphic to an alternating or a symmetric group
and a $3$-cycle $c\in G$, the following algorithm constructs a list of
bolstering elements with respect to $c$. It achieves this by selecting
a number of random elements from $G$ and using the criteria in
Lemma~\ref{la:PreBols} to recognise pre-bolstering elements among
these. From these it then constructs bolstering elements with respect
to $c$.

\begin{algorithm}[\algBE] \label{alg:BElts}
\textit{Input:} A group $G$, an element $c \in G$, a real number
$\veps$ with $0 < \veps < 1$ and $N \in \mathbb{N}$.\\
\textit{Output:} A list $B$ with $B \subset G$.\\
\textit{Algorithm:}
\begin{enumerate}
    \item
    Let $S := 7N\lceil \frac{7}{4}\log \veps^{-1} \rceil$ and
    $R := \lceil \frac{7}{4}\log \veps^{-1} \rceil$.

    \item
    Set $C := \emptyset$.
    Repeat the following step at most $S$ times:
    choose a random element $r \in G$;
    if $[c^r,c] \neq 1_G$, $c^{(r^2)} \not\in \{c,c^2\}$,
    $[c,c^{(r^2)}] = 1_G$ and $|C| < R$, then add $r$ to $C$.

    \item \label{step:DecideTCForm}
    For each $r \in C$, compute $z_r := c^{ r c r } c^{ r c^{(r^2)} c
    }$. If $(z_r)^3 = 1_G$, then add $c^2r$ to $B$.
    Otherwise add $cr$ to $B$. Return $B$.
\end{enumerate}
\end{algorithm}

\begin{lemma} \label{la:BEOutput}
Let $7 \le n \le N$, $G \in \{\Sym_n,\Alt_n\}$, $c \in G$ a $3$-cycle
and $0 < \veps < 1$.  Let $B := \algBE(G,c,\veps,N)$. Then $B$ is a
list of random bolstering elements and, with probability at least $1 -
\veps$, we have $|B| \ge \lceil \frac{7}{4}\log \veps^{-1} \rceil$.
\end{lemma}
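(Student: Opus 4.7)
The plan is to verify two things. First, that every element the algorithm places in $B$ in Step~\ref{step:DecideTCForm} really is bolstering with respect to $c$. Second, that with probability at least $1-\veps$ the loop in Step~$2$ fills $C$ to its target size $R := \lceil \tfrac{7}{4}\log\veps^{-1}\rceil$, in which case $|B| = |C| = R$.

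For correctness, I would first observe that each of the three commutator conditions tested in Step~$2$ depends on $r$ only through the cyclic subgroup $\langle c\rangle$, and is therefore symmetric under swapping $c$ with $c^{-1} = c^2$. By Lemma~\ref{la:PreBols} this means that each $r\in C$ is pre-bolstering with respect to $c$ or with respect to $c^2$, and these two possibilities are mutually exclusive. By the remark preceding Lemma~\ref{la:PreBols}, exactly one of $cr$ and $c^2 r$ is then bolstering with respect to $c$ (the notion of bolstering being the same for $c$ and $c^2$). What remains is to show that the test $(z_r)^3 = 1_G$ picks out the correct one. I would conjugate so that $\lambda(c) = (u,v,w)$, write $r$ in its explicit pre-bolstering form $r = (w,u,a_1,\ldots,a_\alpha)(v,b_1,\ldots,b_\beta)$ or the single-cycle variant, compute $c^{rcr}$ and $c^{rc^{r^2}c}$ as explicit $3$-cycles, and verify that their product has order $3$ precisely in the $c$-orientation.

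For the probability claim, I would appeal to a lower bound on the proportion of pre-bolstering elements of the form $p \ge c_0/n$ for some absolute constant $c_0$ --- such a bound should be among the technical results stated in Section~\ref{sec:techno}. Summing over the two orientations, the success probability per trial of Step~$2$ is then at least $2c_0/n \ge 2c_0/N$, so the number of successes among the $S = 7NR$ independent trials is binomial with mean at least $14c_0 R$. A standard multiplicative Chernoff bound yields $\Pr[|C|<R] \le \exp(-\Omega(R))$, and the constants $7N$ and $\tfrac{7}{4}$ in the algorithm are calibrated so that this bound is at most $\veps$ as soon as $R \ge \lceil\tfrac{7}{4}\log\veps^{-1}\rceil$.

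The main obstacle is verifying that the specific element $z_r = c^{rcr}\,c^{rc^{r^2}c}$ really does distinguish the two orientations of $c$ relative to $r$. This reduces to a finite check across the two pre-bolstering cycle shapes combined with the two choices of $c$ versus $c^{-1}$, but the bookkeeping for the supports of the iterated conjugates is delicate, and only after forming the product $z_r$ and computing its order does the orientation become visible. Once this identity is in hand, the rest of the argument is routine.
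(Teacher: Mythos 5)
Your proposal is correct and follows essentially the same route as the paper: elements of $C$ are pre-bolstering by Lemma~\ref{la:PreBols}, the test $(z_r)^3 = 1_G$ is justified by exactly the explicit computation you describe (in the paper, $z_r$ comes out as a $3$-cycle when $c=(u,v,w)$ and as a $5$-cycle when $c=(u,w,v)$), and the counting claim is Proposition~\ref{prop:LCGoodElements} (proportion at least $2/(5n)$ of pre-bolstering elements from Lemma~\ref{la:GoodOfLength}, plus Chernoff), with the constants $7N$ and $\tfrac{7}{4}$ calibrated just as you anticipate. The only cosmetic difference is your ``two orientations'' bookkeeping: since pre-bolstering depends only on $\supp c$, the paper works with the single proportion $2/(5n)$ rather than doubling an orientation-specific one.
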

\begin{proof}
Let $\supp c = \{u,v,w\}$. Clearly, using Lemma~\ref{la:PreBols} the
elements $r$ constructed in Step~1 of
Algorithm~\ref{alg:BElts} are pre-bolstering with respect to~$c$.
Step~\ref{step:DecideTCForm} has to decide
whether $c = (u,v,w)$ or $c = (u,w,v)$.
In the first case $z_r$ is a $3$-cycle, while in the second case $z_r$
is a $5$-cycle.
\details{First case: $z_r = c^{rcr}c^{c^{(r^2)}c} =
  (b_1,a_2,b_2)(w,a_2,b_1) = (w,a_2,b_2)$.
Second case: $z_r = (v,b_2,a_2)(u,b_1,b_2) = (v,u,b_1,b_2,a_2)$.}
Thus, $(z_r)^3 = 1_G$ if and only if $c = (u,v,w)$ and $B$ is a list
of bolstering elements.
By Proposition~\ref{prop:LCGoodElements}, we find less than $R$ elements
with probability at most~$\veps$, since
$S = 7N\lceil \frac{7}{4} \log \veps^{-1} \rceil
\ge 5N \cdot \max\left( \left(5/4\right)^4 \log \veps^{-1},
	\frac{25}{18} \left\lceil \frac{1}{2}
	\log_{3/4} \veps \right\rceil \right)$.
\details{
$S = 7N\lceil \frac{7}{4} \log \veps^{-1} \rceil
\ge 5N\cdot\frac{625}{448} \lceil \frac{7}{4} \log \veps^{-1} \rceil$
and
\begin{align*}
	\frac{625}{448} \left\lceil \frac{7}{4} \log \veps^{-1} \right\rceil
	&\ge \max\left( \frac{5^4}{4^3\cdot7} \cdot \frac{7}{4} \log \veps^{-1},
	\frac{25}{18} \left\lceil \frac{7}{4} \log \veps^{-1} \right\rceil \right)\\
	&\ge \max\left( \left(\frac{5}{4}\right)^4 \log \veps^{-1},
	\frac{25}{18} \left\lceil \frac{1}{2}
	\log_{\frac{3}{4}} \veps \right\rceil \right).
\end{align*}
}
\end{proof}

\begin{lemma} \label{la:BERuntime}
Let $G$ be a black-box group, $c \in G$ an arbitrary element, $0 <
\veps < 1$ and $N \in \mathbb{N}$.
Then algorithm $\algBE$ with input $G,c,\veps,N$ runs in $\mcO(N \log \veps^{-1} ( \mu + \rho ) )$ time
and requires storage of $\mcO( \log \veps^{-1} )$ group elements.
\end{lemma}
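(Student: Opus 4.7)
The plan is to analyze each step of Algorithm~\ref{alg:BElts} in turn; the key observation is that every loop iteration uses only a constant number of group multiplications, so the bound $S = 7N\lceil \frac{7}{4}\log \veps^{-1}\rceil$ on the number of iterations of Step~2 governs the total cost.

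Step~1 computes two integer constants at negligible cost. For Step~2, each of the at most $S = \mcO(N\log \veps^{-1})$ iterations samples one random element (cost $\rho$) and then tests the three conditions of Lemma~\ref{la:PreBols}; computing $c^r$, $c^{(r^2)}$, the commutators $[c^r,c]$ and $[c,c^{(r^2)}]$, and comparing these with $c$ and $c^2$ requires only a constant number of group multiplications, hence $\mcO(\mu)$ per iteration. Summing over all iterations yields $\mcO(N\log \veps^{-1}(\mu+\rho))$ for Step~2. Step~3 iterates over $C$, which has size at most $R = \lceil \frac{7}{4}\log \veps^{-1}\rceil$; for each $r\in C$ one computes $z_r$, its cube, and one of $cr$ or $c^2r$, again a constant number of multiplications per element, for a total cost of $\mcO(\log \veps^{-1}\,\mu)$. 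Adding the three contributions gives the claimed runtime $\mcO(N\log \veps^{-1}(\mu+\rho))$, with Step~2 being dominant.

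For storage, I would observe that at any point during execution we retain the element $c$, the current random element together with a constant number of derived conjugates and powers, the list $C$ of size at most $R$, and the output list $B$ of size at most $|C|$; all other data are integers. Hence the total number of simultaneously stored group elements is $\mcO(R) = \mcO(\log \veps^{-1})$, as required. No genuine obstacle arises in the argument; the only point that deserves explicit verification is that each loop iteration really performs $\mcO(1)$ group operations, independent of how many iterations have already occurred, so that multiplying by the iteration bound $S$ yields the final estimate.
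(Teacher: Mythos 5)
Your proposal is correct and matches the paper's (very brief) argument: the cost is dominated by the at most $S = \mcO(N\log\veps^{-1})$ iterations of Step~2, each using $\mcO(1)$ group operations and one random element, with Step~3 contributing only $\mcO(\log\veps^{-1})$ operations, and storage is bounded by the lists $C$ and $B$ of size at most $R = \mcO(\log\veps^{-1})$.
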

\begin{proof}
This is immediate.
\details{Choosing up to $S$ elements and deciding whether they are
pre-bolstering requires $\mcO( N \log \veps^{-1} )$ group operations
and random elements.  Constructing bolstering elements for up to $R$
elements has a cost of $\mcO( \log \veps^{-1} )$ group
operations, concluding the proof.}
\end{proof}

\subsubsection{Exploiting bolstering elements} \label{sec:exploit}

Given a bolstering element $x$ with respect to a $3$-cycle $c$, we can
construct a cycle $g_x$ matching $c$,
using Remark~\ref{rem:BE2LC}.
But depending on the type of the bolstering element, this may require
different steps to obtain the longest possible matching cycle.
The type of a given bolstering element
can be determined using only black-box operations  as described in
Remark~\ref{rem:BolsteringProperties}.
We first describe
Algorithm~\algBC
which applies
this remark to obtain a cycle $g_x$ matching $c$ from a given bolstering
element $x$.  
This is used by Algorithm~\algLC, which computes $g_x$ for every $x$
returned by Algorithm~\algBE, and returns the longest~$g_x$.

\begin{remark} \label{rem:BolsteringProperties}
Several properties of bolstering elements can be checked
algorithmically using only black-box operations.
Let $c = (u,v,w)$. Let $x$ be bolstering with respect to $c$
and $u \in \fix x$.
\begin{enumerate}
\item Let $m := \min\{ \alpha, \beta \}$. Then $m$ is the least
  natural  number such that $c^{(x^{m+1})}c$ does not have order $5$.
Note that necessarily $m < n/2$.
\item $\alpha = \beta$ if and only if $c^{(x^{m+1})} \in \{c,c^2\}$.
\item $|\alpha - \beta| = 1$ if and only if $c^{(x^{m+2})}c$ does not
  have order $5$.
\item If $\alpha \neq \beta$, then $w \not\in v^{\langle x \rangle}$, i.e. $x$ is of the first form, if and only if
$c^{(x^{m+1})}c$ has order $2$.
\item Assume $|\alpha - \beta| > 1$. If $w \in v^{\langle x \rangle}$, then $\alpha > \beta$ if and only if
$c^{(x^{m+2})}$ and $c^{(x^{m+1}c)}$ commute. If $w \not\in v^{\langle x \rangle}$, then $\alpha < \beta$ if and only if
$c^{(x^{m+2})}$ and $c^{(x^{m+1}c)}$ commute.
\end{enumerate}
\end{remark}

When called with input a black-box group $G$ isomorphic to an
alternating group $\Alt_n$ or a symmetric group $\Sym_n$ and elements $c,
x\in G$ such that $c$ is a 3-cycle and $x$ is a bolstering element
with respect to $c$, the following algorithm determines a cycle $g_x$
of length $k$ matching $c$. It returns $g_x$ and its length $k$.

\begin{algorithm}[\algBC] \label{alg:BuildCycle}
\textit{Input:} Elements $c,x$ of a group $G$ and $N \in \mathbb{N}$.\\
\textit{Output:} A number $k \in \mathbb{N}$ and an element $g \in G$, or $\textsc{fail}$.\\
\textit{Algorithm:}
 Determine $m := \min \{\alpha,\beta\}$ and check whether $| \alpha -
 \beta | \ge 2$ as described
in Remark~\ref{rem:BolsteringProperties}. If $m \ge N/2$,
return \textsc{fail}. Compute $y := c \cdot c^x \cdot c^{(x^2)}
\cdots c^{(x^m)}$.
If $|\alpha-\beta| \le 1$, return $2m+3, y$.
Otherwise set $d := c^{(x^{m+1})}$ and
\[
	e := \begin{cases}
		d^{xc}, & \text{ if } w \in v^{\langle x \rangle} \text{ and } \alpha > \beta, \\
		(d^{(xc^2)})^2, & \text{ if } w \in v^{\langle x \rangle} \text{ and } \alpha < \beta, \\
		d^{(xc^2)}, & \text{ if } w \not\in v^{\langle x \rangle} \text{ and } \alpha > \beta, \\
		(d^{(xc)})^2, & \text{ if } w \not\in v^{\langle x \rangle} \text{ and } \alpha < \beta,
	\end{cases}
\]
where we can decide whether $w \in v^{\langle x \rangle}$ and $\alpha > \beta$
using Remark~\ref{rem:BolsteringProperties}.
Set $z := d^e$ and
determine $m'$ as described in Remark~\ref{rem:BE2LC}.
If $m' \ge N/2$, return \textsc{fail}.
Otherwise compute
$g := y \cdot z \cdot z^{(x^2)} \cdots z^{(x^{2(m'-1)})}$.
Return $2m'+2m+3, g$.
\end{algorithm}

\begin{lemma} \label{la:BCOutput}
Let $7 \le n \le N \in \mathbb{N}$,
$c \in \Sym_n$ a $3$-cycle and $x$ a bolstering element with respect to~$c$.
Then $\algBC$ with input $c,x,N$ returns $k$ and $g$ such that $g$ is a $k$-cycle matching $c$.
\end{lemma}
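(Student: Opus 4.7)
The plan is to check that the algorithm correctly extracts the cycle structure of $x$ and then applies Remark~\ref{rem:BE2LC} to build a cycle of the required form, entirely by direct permutation calculation.

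First I would verify that each of the five claims of Remark~\ref{rem:BolsteringProperties} holds: every criterion reduces to evaluating the action of $c^{(x^i)}$ on $\supp c \cup \supp x$, and is immediate once $x$ is written in the form $(v,a_1,\ldots,a_\alpha)(w,b_1,\ldots,b_\beta)$ or $(v,a_1,\ldots,a_\alpha,w,b_1,\ldots,b_\beta)$ with $u \in \fix x$. Consequently the algorithm correctly computes $m$, determines whether $|\alpha-\beta|\le 1$, distinguishes the single- and two-cycle type of $x$, and reads off the sign of $\alpha-\beta$. The \textsc{fail} branches trigger only if $m\ge N/2$ or $m'\ge N/2$; both force $|\supp x|>N\ge n$, which is impossible.

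In the case $|\alpha-\beta|\le 1$, Remark~\ref{rem:BE2LC}(1) gives that $y := c \cdot c^x \cdots c^{(x^m)}$ is a single cycle of length $2m+3$, and a short evaluation of $u^y, v^y$ shows that $y$ sends $u\mapsto v\mapsto w$, hence has the form $(u,v,w,\ldots)$ and matches $c$. In the case $|\alpha-\beta|\ge 2$ I would then verify, for each of the four sub-cases in the piecewise definition of $e$, that $z = d^e$ equals $(u,b_{\alpha+2},b_{\alpha+1})$ when $\alpha<\beta$ and $(u,a_{\beta+1},a_{\beta+2})$ when $\alpha>\beta$. Each sub-case is an elementary three-line calculation: one writes down $d = c^{(x^{m+1})}$ as the $3$-cycle obtained by advancing $v$ and $w$ by $m+1$ steps along their respective $x$-orbits, computes $e$ via at most one conjugation by $c$ (or $c^2$) together with a possible squaring, and then evaluates the action of $e$ on the three entries of $d$. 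Once $z$ has the shape required by Remark~\ref{rem:BE2LC}(2,3), that remark immediately gives $g = y\cdot z\cdot z^{(x^2)}\cdots z^{(x^{2(m'-1)})}$ as a single cycle of length $2m'+2m+3$, and the same tracing as for $y$ shows $g$ begins $(u,v,w,\ldots)$, so $g$ matches $c$.

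The main obstacle is the fourfold bookkeeping for $z = d^e$. No single sub-case is difficult, but the four must be checked independently because both the type of $x$ and the sign of $\alpha-\beta$ determine which entry of $d$ needs to be shifted to the next point along an $x$-orbit, and whether the shift requires the squaring that appears in two of the four formulas for $e$. No probabilistic argument enters: once $c$ and $x$ are in their assumed cycle forms everything is a deterministic permutation computation.
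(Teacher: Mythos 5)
Your proposal is correct and follows essentially the same route as the paper: the paper's proof likewise consists of invoking Remarks~\ref{rem:BE2LC} and~\ref{rem:BolsteringProperties} and checking in each of the four sub-cases that $z=d^e$ equals $(u,a_{\beta+1},a_{\beta+2})$ for $\alpha>\beta$ and $(u,b_{\alpha+2},b_{\alpha+1})$ for $\alpha<\beta$. Your additional observations (that the \textsc{fail} branches cannot trigger since $m,m'<N/2$, and the explicit tracing showing the constructed cycles begin $(u,v,w,\ldots)$) are consistent with, and slightly more detailed than, the paper's argument.
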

\begin{proof}
This is an application of Remarks~\ref{rem:BE2LC} and~\ref{rem:BolsteringProperties},
where it is easy to check that $z$ has the form given in
Remark~\ref{rem:BE2LC}, e.g., if
$w \in v^{\langle x \rangle}$ and $\alpha > \beta$ we have $d = (u,a_{\beta+1},v)$ and $e = (v,a_{\beta+2},a_1)$,
hence $z = (u,a_{\beta+1},a_{\beta+2})$.
\details{
	We have
	\begin{itemize}
	\item $w \in v^{\langle x \rangle}$, $\alpha > \beta$: $d =
          (u,a_{\beta+1},v)$, $e = (v,a_{\beta+2},a_1)$
	\item $w \in v^{\langle x \rangle}$, $\alpha < \beta$: $d =
          (u,v,b_{\alpha+1})$, $e = (w,b_{\alpha+2},b_1)$
	\item $w \not\in v^{\langle x \rangle}$, $\alpha > \beta$: $d
          = (u,a_{\beta+1},w)$, $e = (w,a_{\beta+2},b_1)$
	\item $w \not\in v^{\langle x \rangle}$, $\alpha < \beta$: $d
          = (u,v,b_{\alpha+1})$, $e = (v,b_{\alpha+2},a_1)$,
	\end{itemize}
	so $z = (u,a_{\beta+1},a_{\beta+2})$ if $\alpha > \beta$ and
        $z = (u,b_{\alpha+2},b_{\alpha+1})$ if $\beta > \alpha$.
}
\end{proof}

\begin{lemma} \label{la:BCRuntime}
Let $G$ be a finite group, $c,x \in G$ arbitrary elements, and $N \in \mathbb{N}$.
Then \algBC with input $c,x,N$ runs in $\mcO ( N \mu ) $ time and requires storage
of a constant number of group elements.
\end{lemma}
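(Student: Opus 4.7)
The plan is to walk through \algBC{} line by line and show that every loop makes $O(N)$ iterations, each costing $O(1)$ group operations, while all other steps use $O(1)$ group operations. The crucial observation making this easy is that both loops (the one determining $m$ and the one determining $m'$) are explicitly truncated at $N/2$, returning \textsc{fail} otherwise, so their length is bounded by $N/2$.

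First I would analyse the computation of $m$ via Remark~\ref{rem:BolsteringProperties}(1). This is a loop of length at most $N/2 + 1$ in which I maintain the current conjugate $c^{(x^i)}$, update it via $c^{(x^{i+1})} = \bigl( c^{(x^i)} \bigr)^x$ using one conjugation, and then test whether $c^{(x^{i+1})} c$ has order~$5$, which requires only a constant number of multiplications and identity tests. Accumulating $y = c \cdot c^x \cdots c^{(x^m)}$ in-line with this loop adds one further multiplication per iteration, so the whole block costs $O(N\mu)$. The subsequent case distinction defining $e$ uses Remark~\ref{rem:BolsteringProperties}(2)--(5), each of which is again a constant number of group operations.

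Next I would handle $d$, $e$, $z$ and then the loop that determines $m'$. The elements $d$, $e$, and $z$ are short words in $c$ and $x$, so each takes $O(1)$ group operations. The determination of $m'$ from Remark~\ref{rem:BE2LC} is structurally identical to that of $m$: a loop of length at most $N/2$ in which I incrementally update $z^{(x^{2i})}$ by two conjugations with $x$, test the order of a product, and accumulate $g = y \cdot z \cdot z^{(x^2)} \cdots z^{(x^{2(m'-1)})}$ with one additional multiplication per step. Summing gives $O(N\mu)$ overall.

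The storage bound is then immediate: at any point I need only the inputs $c, x$, the running conjugate $c^{(x^i)}$ or $z^{(x^{2i})}$, the accumulated product $y$ (later $g$), and a handful of temporary elements ($d$, $e$, $z$, and the short powers used in the order tests), which is a constant number of group elements. There is no genuine obstacle here; the argument is pure bookkeeping. The only points that deserve checking are that each clause of Remark~\ref{rem:BolsteringProperties} really can be verified with $O(1)$ group operations and that $y$ and $g$ can be accumulated within the existing loops so as to avoid a separate pass.
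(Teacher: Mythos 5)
Your proposal is correct and matches the paper's own argument: the paper's proof likewise observes that by storing $c^{(x^{i-1})}$ each successive conjugate is obtained in constant time and that $m$ and $m'$ are bounded by $N/2$, from which the time and storage bounds follow. Your version just spells out the same bookkeeping in more detail.
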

\begin{proof}
By storing $c^{(x^{i-1})}$, the next element $c^{(x^i)}$ can be computed in constant time.
Since $m$ and $m'$ are bounded by $N/2$, the lemma follows.
\end{proof}

\begin{algorithm}[\algLC] \label{alg:LC}
\textit{Input:} A group $G$, an element $c \in G$, $0 < \veps < 1$ and $N \in \mathbb{N}$.\\
\textit{Output:} A number $k \in \mathbb{N}$ and an element $g \in G$ or \textsc{fail}.\\
\textit{Algorithm:}
\begin{enumerate}
\item \label{step:LCInit}
Let $B := \algBE(G,c,\veps/2,N)$. If $B$ contains less than
$\lceil \frac{7}{4}\log ( 2/\veps ) \rceil$ elements, return \textsc{fail}.
\item \label{step:LCFirstLength}
Call \algBC for each bolstering element $x \in B$.
If this fails for some $x$, return \textsc{fail}.
Otherwise return $k$ and $g$ computed by \algBC with maximal $k$.
\end{enumerate}
\end{algorithm}

\begin{lemma} \label{la:LCOutput}
Let $9 \le N \in \mathbb{N}$, $0 < \veps < 1$,
$G \in \{ \Sym_n, \Alt_n \}$ for some $9 \le n \le N$ and
$c \in G$ a $3$-cycle. Then, with probability at least $1-\veps$, $\algLC$
with input $G,c,\veps,N$ returns $k$ and $g$ such that
$k \ge \max(3n/4, 9)$ and $g$ is a $k$-cycle matching $c$.
\end{lemma}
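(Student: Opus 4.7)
The plan is to separate two potential failure modes of Algorithm~\ref{alg:LC} and bound each probability, then conclude by a union bound. The first failure mode occurs if Step~\ref{step:LCInit} returns too few bolstering elements; the second occurs if none of the resulting bolstering elements yields a cycle of length at least $\max(3n/4, 9)$.

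For the first mode I would apply Lemma~\ref{la:BEOutput} with failure parameter $\veps/2$: with probability at least $1 - \veps/2$, the set $B$ contains at least $R := \lceil \frac{7}{4}\log(2/\veps) \rceil$ independent, uniformly random bolstering elements with respect to $c$, and otherwise Algorithm~\ref{alg:LC} reports \textsc{fail} in Step~\ref{step:LCInit} as allowed. Conditioning on this event, each $x \in B$ lies in $\Sym_n$ with $n \le N$; consequently, the quantities $m = \min(\alpha_x, \beta_x)$ and $m' = \lfloor |\alpha_x - \beta_x|/2 \rfloor$ appearing in Algorithm~\ref{alg:BuildCycle} both satisfy $m, m' < n/2 \le N/2$, so $\algBC$ never returns \textsc{fail}. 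Thus, by Lemma~\ref{la:BCOutput}, each call $\algBC(c, x, N)$ produces $(k_x, g_x)$ with $g_x$ a $k_x$-cycle matching $c$; moreover $k_x \in \{\alpha_x + \beta_x + 2, \alpha_x + \beta_x + 3\}$ according to the parity of $|\alpha_x - \beta_x|$, as follows from Remark~\ref{rem:BE2LC}.

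For the second mode I would invoke the proportion result to be proved in Section~\ref{sec:techno}---namely Proposition~\ref{prop:LCGoodElements}---which will guarantee that a random bolstering element satisfies $\alpha + \beta$ large enough to force $k_x \ge \max(3n/4, 9)$ with probability at least some constant $p$ bounded away from zero. By independence of the samples, the probability that every bolstering element in $B$ fails the length requirement is then at most $(1-p)^R$; the choice $R = \lceil \frac{7}{4}\log(2/\veps) \rceil$ is made precisely so that this bound is at most $\veps/2$. A union bound then yields overall failure probability at most $\veps$, completing the argument.

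The main obstacle is the proportion estimate itself: one must analyse the joint cycle structure of a random element with respect to a fixed $3$-cycle carefully enough to show that a constant fraction of bolstering elements move at least roughly $3n/4$ points, and the constants must line up so that the modest number $\lceil\frac{7}{4}\log(2/\veps)\rceil$ of samples truly drives the tail $(1-p)^R$ below $\veps/2$. Verifying these edge cases for the small degrees $n$ where the threshold $9$ rather than $3n/4$ is binding also requires a brief separate check.
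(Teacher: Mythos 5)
Your decomposition is exactly the one the paper uses: split $\veps$ as $\veps/2+\veps/2$, invoke Lemma~\ref{la:BEOutput} for Step~\ref{step:LCInit}, observe that \algBC cannot fail on genuine bolstering elements since $m,m'<n/2\le N/2$, and then argue that among the $R=\lceil \frac{7}{4}\log(2/\veps)\rceil$ random bolstering elements at least one yields a long cycle. The one concrete flaw is the citation for the second failure mode: Proposition~\ref{prop:LCGoodElements} is not the right ingredient. That proposition only bounds the probability that enough of the $S$ random elements drawn inside \algBE are pre-bolstering at all; it is precisely what underlies Lemma~\ref{la:BEOutput}, which you have already spent on the first failure mode, and it says nothing about the parameters $\alpha_x+\beta_x$ or the resulting cycle lengths. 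The statement you actually describe --- that a fixed positive proportion of pre-bolstering elements are $k$-pre-bolstering with $k\ge\max(9,\lceil \alpha n\rceil+1)$ --- is Proposition~\ref{prop:LCBigOrbits}, applied with $\alpha=3/4$ and failure parameter $\veps/2$. It gives the explicit constant $p=1-\alpha^2=7/16$, so your tail bound is $(9/16)^{R}$, and the ``constants lining up'' that you flag as the main obstacle reduces to the single inequality $\frac{7}{4}\log(2/\veps)\ge\frac{1}{2}\log_{3/4}(\veps/2)$, which the paper verifies in its proof. (Your parity remark $k_x\in\{\alpha_x+\beta_x+2,\alpha_x+\beta_x+3\}$ is also what makes the threshold $9$ work: the constructed cycle has odd length at least $k_j-1\ge 8$, hence at least $9$.) With the reference corrected to Proposition~\ref{prop:LCBigOrbits} and that one inequality checked, your argument coincides with the paper's proof.
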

\begin{proof}
Step~\ref{step:LCInit} succeeds with probability at least $1 - \veps/2$, cf.~Lemma~\ref{la:BEOutput}.
Since $7/4 \log(2/\veps) \geq 1/2\log_{3/4}(\veps/2)$,
Proposition~\ref{prop:LCBigOrbits} yields that, with probability at least $1-\veps/2$,
\algBC constructs at least one $k$-cycle with $k \geq \max(3n/4,9)$.
\end{proof}

\begin{lemma} \label{la:LCRuntime}
Let $G$ be a finite group, $c \in G$ an arbitrary element, $0 < \veps < 1$ and $N \in \mathbb{N}$.
Then \algLC with input $G,c,\veps,N$
runs in $\mcO ( N \log \veps^{-1} ( \mu + \rho ) ) $ time and requires storage
of $\mcO(\log \veps^{-1})$ group elements.
\end{lemma}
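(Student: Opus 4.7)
The plan is to decompose \algLC into its two substeps and add their costs, using Lemmas~\ref{la:BERuntime} and~\ref{la:BCRuntime} together with the a~priori upper bound on the size of the list $B$ of bolstering elements.

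First I would handle Step~\ref{step:LCInit}: by Lemma~\ref{la:BERuntime}, the call $\algBE(G,c,\veps/2,N)$ runs in $\mcO(N \log \veps^{-1}(\mu+\rho))$ time and requires storage of $\mcO(\log \veps^{-1})$ group elements (the factor $\log(2/\veps)$ is absorbed into $\log \veps^{-1}$). The test $|B| < \lceil \tfrac{7}{4}\log(2/\veps)\rceil$ and the possible early return incur no additional asymptotic cost.

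Next I would bound Step~\ref{step:LCFirstLength}. The key observation is that by construction in Algorithm~\algBE the set $C$ of accepted pre-bolstering elements satisfies $|C|\le R=\lceil \tfrac{7}{4}\log(2/\veps)^{-1}\rceil$, and $B$ is obtained by producing one bolstering element from each $r\in C$. Hence $|B|=\mcO(\log \veps^{-1})$ deterministically. By Lemma~\ref{la:BCRuntime} each call to \algBC costs $\mcO(N\mu)$ time and a constant number of stored elements, so looping over $B$ contributes $\mcO(N \log \veps^{-1}\,\mu)$ to the time and $\mcO(1)$ to the additional storage (only the current best $(k,g)$ must be kept).

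Adding the two step costs gives the total runtime $\mcO(N\log \veps^{-1}(\mu+\rho))$, and the storage is dominated by the $\mcO(\log \veps^{-1})$ elements returned by \algBE. There is no real obstacle here; the only point that needs brief verification is the deterministic upper bound $|B|=\mcO(\log \veps^{-1})$ coming from the cap $|C|<R$ inside \algBE, which is what keeps the number of \algBC invocations logarithmic regardless of the input group.
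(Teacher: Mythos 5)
Your argument is correct and matches the paper's proof, which simply cites Lemmas~\ref{la:BERuntime} and~\ref{la:BCRuntime}; you have just made the bookkeeping (in particular the deterministic cap $|B|\le R = \mcO(\log\veps^{-1})$ bounding the number of \algBC calls) explicit. The only blemish is a typo where you wrote $R=\lceil \tfrac{7}{4}\log(2/\veps)^{-1}\rceil$ instead of $R=\lceil \tfrac{7}{4}\log(2/\veps)\rceil$, which does not affect the argument.
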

\begin{proof}
This follows from Lemmas~\ref{la:BERuntime} and~\ref{la:BCRuntime}.
\end{proof}

\subsection{Auxiliary algorithms} \label{sec:aux}

In this section we describe short algorithms which are called by
Algorithm~\algGens.
For our discussion, we assume we are given a group $G$
isomorphic to $\Alt_n$ or $\Sym_n$ and
that $c$ is a $3$-cycle and $g$ a  $k$-cycle matching~$c$. 
We  perform computations mainly in $\langle g, c \rangle \cong
\Alt_{k}$.

The first algorithm decides whether a point $i \in \supp g$ is fixed by a
given element $r \in G$.

\begin{remark}\label{rem:DFPSets}
Let $a_1, \dotsc, a_7 \in \N$ be pairwise distinct and
\[
    A := \{\{1,2,i\}\,:\, 3 \leq i \leq 6\}.
\]
If the sets $\{a_1, a_2, a_3\}$, $\{a_1, a_4, a_5\}$, $\{a_1, a_6, a_7\}$
intersect each set in $A$ non-trivially, then $a_1 \in \{1,2\}$.
\end{remark}

This observation allows us to recognise a fixed point of an arbitrary
element $r \in G$ by examining the intersection of the supports
of some aptly chosen elements. If $c$ is a $3$-cycle and $g$ a
matching cycle,  the following algorithm decides whether the single point in
the intersection of the supports
of $c$ and $c^{(g^2)}$ is fixed by $r$.

\begin{algorithm}[\algDFP] \label{alg:DFP}
\textit{Input:} Elements $g,c,r$ of a group $G$.\\
\textit{Output:} \textsc{true} or \textsc{false}.\\
\textit{Algorithm:}
Define
\[
    X := \{ c^r, c^{g^2 r}, c^{g^2 c^{(g^3)} c^{(g^4)} r} \}
\]
and
\[
    H_1 := \{ c^2, c^{c^g}, c^{c^g c^{(g^3)}}, c^{c^g (c^{(g^3)})^2}, c^{c^g (c^{(g^3)})^2 c^{(g^4)}} \}.
\]
If there is an element $x \in X$ such that $[x,h] = 1_G$ for at least two different $h \in H_1$,
then return $\textsc{false}$. Otherwise define
\[
    H_2 := \{ c, c^g, c^{g c^{(g^3)}}, c^{g (c^{(g^3)})^2}, c^{g (c^{(g^3)})^2 c^{(g^4)}} \}.
\]
If there is an element $x \in X$ such that $[x,h] = 1_G$ for at least two different $h \in H_2$,
then return $\textsc{false}$. Otherwise return \textsc{true}.
\end{algorithm}


\begin{lemma} \label{la:DFPOutput}
Let $7 \le k \le n$, $c \in \Sym_n$ a $3$-cycle, $g \in \Sym_n$ a $k$-cycle
matching~$c$ and $r \in \Sym_n$ an arbitrary element.
$\textsc{IsFixedPoint}(g, c, r)$ returns \textsc{true} if and only if the unique point contained in
both $\supp c$ and $\supp c^{(g^2)}$ is fixed by~$r$.
\end{lemma}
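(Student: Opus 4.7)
The plan is to establish Lemma~\ref{la:DFPOutput} in both directions by making the construction explicit and then appealing to Remark~\ref{rem:DFPSets}. The structural fact I rely on throughout is that every element of $X \cup H_1 \cup H_2$ is a $3$-cycle, and two $3$-cycles commute if and only if their supports coincide or are disjoint. Writing $c = (u,v,w)$ and $g = (u,v,w,v_4,v_5,\ldots,v_k)$---possible because $g$ matches $c$ and $k \ge 7$---a direct computation gives $X = \{(u^r,v^r,w^r),\,(w^r,v_4^r,v_5^r),\,(w^r,v_6^r,v_7^r)\}$, while the five elements of $H_1$ have supports $\{u,w,z\}$ for $z \in \{v,v_4,v_5,v_6,v_7\}$ and the five elements of $H_2$ have supports $\{v,w,z\}$ for $z \in \{u,v_4,v_5,v_6,v_7\}$. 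The unique point in $\supp c \cap \supp c^{(g^2)}$ is $w$, so the lemma amounts to showing that the algorithm returns \textsc{true} if and only if $w^r = w$.

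The forward direction is immediate: if $w^r = w$, then $w$ lies in every support appearing in $X$ and in every support appearing in $H_1 \cup H_2$, so $w \in \supp x \cap \supp h$ for every such pair. Commuting then requires equal supports, and the single set $\supp x$ coincides with at most one support in each of $H_1$ and $H_2$; hence the algorithm returns \textsc{true}.

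For the converse, assume the algorithm returns \textsc{true}; I show $w^r = w$. First I verify that $\supp h \cap \supp x \ne \emptyset$ for every $h \in H_1$ and every $x \in X$: otherwise $u,w \notin \supp x$ (since $u,w$ belong to every support in $H_1$), each support $\{u,w,z\} \in H_1$ intersects $\supp x$ in $\supp x \cap \{z\}$, and commuting becomes equivalent to $z \notin \supp x$; because $|\supp x| = 3$ but there are five distinct values of $z$, at least two $h \in H_1$ commute with $x$, contradicting the hypothesis. The analogous statement for $H_2$ holds by symmetry. Now the three sets $\supp x_1, \supp x_2, \supp x_3$ share the point $w^r$ and together involve the seven pairwise distinct points $w^r, u^r, v^r, v_4^r, v_5^r, v_6^r, v_7^r$, so Remark~\ref{rem:DFPSets}, applied with $(1,2)$ replaced by $(u,w)$, $a_1 = w^r$, and the remaining $a_i$ assigned in the obvious way, yields $w^r \in \{u,w\}$ from the four $H_1$-supports with $z \in \{v_4,v_5,v_6,v_7\}$. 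The symmetric application using the analogous four $H_2$-supports gives $w^r \in \{v,w\}$, and intersecting forces $w^r = w$. The main obstacle I anticipate is the bridge from the algorithm's black-box commutator tests to the combinatorial hypothesis of Remark~\ref{rem:DFPSets}, which is handled by the straightforward counting argument above.
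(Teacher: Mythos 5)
Your proof is correct and follows essentially the same route as the paper's: compute the supports of $X$, $H_1$, $H_2$ explicitly, use the fact that two $3$-cycles commute iff their supports are equal or disjoint for the forward direction, and invoke Remark~\ref{rem:DFPSets} for the converse. Your intermediate counting step showing that \emph{every} element of $H_1$ (resp.\ $H_2$) meets every $x \in X$ is a slight strengthening of the paper's ``at least four per $x$'' and makes the appeal to the Remark, as literally stated with a fixed four-set family, fully airtight.
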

\begin{proof}
Without loss of generality, let $c = (1,2,3)$ and $g = (1,2,\ldots,k)$. We find
$\supp c \cap \supp c^{(g^2)} = \{3\}$,
$H_1 = \{ (1,3,j) \,:\, j \in \{2,4,5,6,7\}\}$,
$H_2 = \{ (2,3,j) \,:\, j \in \{1,4,5,6,7\} \}$
and $X = \{ (1,2,3)^r, (3,4,5)^r, (3,6,7)^r \}$.

Assume that $\algDFP$ returns $\textsc{false}$.
Then there are elements $x \in X$ and $h_1, h_2 \in H_1$ (or in $H_2$) commuting with $x$.
Suppose $3 \in \fix r$.
Since then $3 \in \supp x \cap \supp h_1 \cap \supp h_2$ and $h_1,h_2$ commute with $x$,
we obtain $\supp h_1 = \supp x = \supp h_2$, a contradiction. Thus $3 \not\in \fix r$.

Conversely assume that $\algDFP$ returns $\textsc{true}$.
Then, for each $x \in X$, there exist $h_1, \dotsc, h_4 \in H_1$ with $\supp h_i \cap \supp x \neq \emptyset$,
and similarly for $H_2$.
The result now follows by Remark~\ref{rem:DFPSets}.
\end{proof}


\begin{lemma} \label{la:DFPRuntime}
Let $G$ be a finite group and $g,c,r \in G$ arbitrary elements. Then
$\algDFP$ with input $g,c,r$ uses a  constant number of group
operations  and requires storage of a constant number of
group elements.
\end{lemma}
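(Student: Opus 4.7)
The plan is to verify the claim by direct inspection of Algorithm~\algDFP, since the algorithm contains no loops whose length depends on the input. It suffices to bound the total number of group operations and simultaneously stored elements by absolute constants.

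First I would observe that each of $X$, $H_1$, and $H_2$ has at most five members, and each such member is a word of bounded length in $c$, $g$, and $r$. By computing the common subexpressions (namely $g^2$, $g^3$, $g^4$, and the conjugates $c^g$, $c^{(g^3)}$, $c^{(g^4)}$) once at the outset and reusing them, every element of $X \cup H_1 \cup H_2$ can then be assembled from a constant number of further multiplications. Hence the construction of the three sets costs $\mcO(1)$ group operations overall.

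Next I would bound the commutator testing. For each $x \in X$ and each $h$ in the relevant $H_i$, the algorithm forms the commutator $[x,h]$ using a constant number of multiplications and performs one equality test with $1_G$. Since $|X| \le 3$ and $|H_1|, |H_2| \le 5$, there are at most $3\cdot(5+5)=30$ such tests in total. The bookkeeping needed to decide whether some $x \in X$ commutes with at least two elements of $H_i$ is purely integer arithmetic and does not contribute to the black-box cost.

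Finally, at any given moment the algorithm only needs to hold the elements of $X$, $H_1$, $H_2$, the shared conjugates listed above, and a bounded number of scratch products used inside a single commutator test; this is a constant total. Both claimed bounds follow. The only potential subtlety — and arguably the main thing to be careful about — is to make sure that the shared subexpressions are indeed reused rather than recomputed for every word, so that the constant hidden in $\mcO(1)$ is independent of $N$; this is a matter of a careful implementation and not of the underlying count.
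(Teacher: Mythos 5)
Your proposal is correct and amounts to spelling out what the paper dismisses with ``This is immediate'': the sets $X$, $H_1$, $H_2$ have bounded size, each member is a bounded-length word in $g,c,r$, and there are at most a constant number of commutator tests, so both the operation count and the storage are $\mcO(1)$. Same approach, just made explicit.
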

\begin{proof}
This is immediate.
\end{proof}


Let $G$ be a black-box group isomorphic to an alternating or symmetric group,
$c \in G$ a $3$-cycle, $g \in G$ a $k$-cycle matching $c$, and $r$ another element of~$G$.
Assume without loss of generality that $g = (1,2,\ldots,k)$ and $c = (1,2,3)$.
If $r$ satisfies $|\supp r \cap \supp g| \geq 1$ and $|\fix r \cap \supp g| \geq 2$,
the next algorithm computes a conjugate $\wt{r} = r^x$ such that $\wt{r}$ fixes the
points $1$~and~$2$, but not the point $3$.
Here we identify the point $j \in \{1, \dotsc, k\}$ with the $3$-cycle $c^{g^{(j-3)}}$.

\begin{algorithm}[\algFPM] \label{alg:FPM}
\textit{Input:} Elements $g,c,r$ of a group $G$ and $k \in \mathbb{N}$.\\
\textit{Output:} An element $\wt{r} \in G$ conjugate to $r$ or $\textsc{fail}$.\\
\textit{Algorithm:}
Compute the set
\[
	F := \{ 1 \le j \le k : \algDFP(g,c^{(g^{j-3})},r) = \textsc{true} \}.
\]
If $|F| < 2$ or $|F| = k$, then return \textsc{fail}. Otherwise, define $f_1$ as the smallest and $f_2$ as
the second smallest number in $F$. Define $m$ as the smallest natural number not in $F$.
Define the element $x \in G$ according to the following table:
\begin{center}
\begin{tabular}{c|c}
$F \cap \{1,2,3,4\}$ & $x$ \\
\hline
$\{1,2,3,4\}$ or $\{1,2,3\}$ &
$ c^{(gc^2)^{m-3}c}c $ \\
$\{1,2,4\}$ or $\{1,2\}$ &
$ 1_G $ \\
$\{1,3,4\}$ &
$c^g$ \\
$\{1,3\}$ &
$(c^2)^g$ \\
$\{1,4\}$ or $\{1\}$ &
$ c^{(gc^2)^{f_2-3}c} $ \\
$\{2,3,4\}$ or $\{2,4\}$ &
$c^{c^g}$ \\
$\{2,3\}$ &
$(c^2)^{c^g}$ \\
$\{2\}$ &
$c^{(gc^2)^{f_2-3}c^g}$ \\
$\{3,4\}$ or $\{3\}$ &
$(c^2)^{(gc^2)^{f_2-3}}c^2$ \\
$\{4\}$ or $\emptyset$ &
$c^{(gc^2)^{f_2-3}} c^{(gc^2)^{f_1-3}}$
\end{tabular}
\end{center}
Return $\wt{r} := r^x$.
\end{algorithm}

\begin{lemma} \label{la:FPMOutput}
Let $7 \le k_0 \le k \le n \in \mathbb{N}$, $c =(1,2,3)$,
$g =(1,2,\ldots,k)$ and $r \in \Sym_n$ a $k_0$-cycle.
If $r$ has in $\supp g$ at least two fixed points
and one moved point, then $\wt{r} := \algFPM(g,c,r,k)$
is a $k_0$-cycle fixing the points $1$ and $2$ and moving $3$.
Moreover, the difference $\supp r - \supp g$ lies in $\supp \wt{r}$.
\end{lemma}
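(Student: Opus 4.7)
The plan rests on three observations. First, the set $F$ computed in the opening line of $\algFPM$ equals $\fix r \cap \supp g$: for each $j \in \{1,\dotsc,k\}$ the element $c^{(g^{j-3})}$ handed to $\algDFP$ is a $3$-cycle whose unique common point with $c^{(g^{j-1})}$ is $j$, so by Lemma~\ref{la:DFPOutput} the call returns \textsc{true} precisely when $j$ is fixed by $r$. The hypothesis that $r$ has at least two fixed points and one moved point in $\supp g$ then yields $2 \le |F| \le k-1$, so the algorithm does not return $\textsc{fail}$, and the smallest two fixed points $f_1 < f_2$ in $F$ together with the smallest moved point $m$ in $\supp g$ are well-defined.

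Second, the element $x$ produced in every row of the table is, by inspection, a word in $c$ and $g$. Hence $x \in \langle g, c \rangle \le \Sym(\supp g)$ and in particular $x$ fixes every point of $\{1,\dotsc,n\} - \supp g$ pointwise. Conjugation by $x$ therefore preserves cycle type, so $\wt{r} = r^x$ is a $k_0$-cycle; and for any $i \in \supp r - \supp g$ the element $x$ fixes $i$, and either fixes $r(i)$ (if $r(i) \notin \supp g$) or moves it within $\supp g$, so in any case $\wt{r}(i) \ne i$. This establishes the cycle-length statement together with the ``moreover'' claim.

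Third, it remains to show that $\wt{r}$ fixes $1, 2$ and moves $3$. This amounts to verifying that conjugation by $x$ sends the triple $(1,2,3)$ onto a triple $(p_1,p_2,p_3)$ with $p_1, p_2 \in F$ and $p_3 \notin F$. The proposed verification is a direct computation in each of the ten rows of the table. The computational backbone is the identity $gc^2 = (3,4,\dotsc,k)$, from which $(gc^2)^{j-3}$ is the cyclic shift of $\{3,\dotsc,k\}$ that sends $3 \mapsto j$ while fixing $1$ and $2$. Composing suitably with $c$, $c^g$, and $(c^2)^g$ provides the flexibility to send $(1,2,3)$ to any prescribed triple in $\{1,\dotsc,k\}$; the ten rows of the table are precisely a case-by-case catalogue of the minimal such composition for each configuration of $F \cap \{1,2,3,4\}$. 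For instance, in the row $F \cap \{1,2,3,4\} = \{1,2\}$ the hypothesis already gives that $1,2$ are fixed and $3$ is moved, so $x = 1_G$ suffices; in the row $\{1,3,4\}$ one uses the observation that $c^g$ interchanges the roles of the fixed point $3$ and the moved point $2$, bringing a moved point into position $3$ while retaining the fixed points $1,2$; the remaining rows combine $(gc^2)^{f_1-3}$ and $(gc^2)^{f_2-3}$ with a conjugator from $\{c, c^g, (c^2)^g\}$ to transport $1$ and $2$ onto $f_1$ and $f_2$ and $3$ onto a moved point. The sole obstacle is the tedious but routine bookkeeping of these ten verifications, each of which is a short computation once the action of $gc^2$ and of the conjugators on $\{1,2,3,4\}$ is tabulated.
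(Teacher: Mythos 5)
Your strategy is the same as the paper's: use Lemma~\ref{la:DFPOutput} to identify $F = \fix r \cap \supp g$, note that $x \in \langle g, c\rangle$ fixes $\{1,\ldots,n\} - \supp g$ pointwise (which gives both that $\wt{r}$ is a $k_0$-cycle and the ``moreover'' claim), and then verify the ten rows of the table. Your first two points are correct as stated, and your identity $gc^2 = (3,4,\ldots,k)$ is exactly the tool needed to evaluate the table entries.

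However, the decisive part of the lemma is the row-by-row check, and there your verification criterion has the conjugation direction reversed, and the checks are then deferred rather than carried out. With the convention $r^x = x^{-1}rx$ used throughout the paper, $\wt{r} = r^x$ fixes a point $j$ if and only if $r$ fixes $j^{x^{-1}}$; so what must be verified in each row is that $1^{x^{-1}}, 2^{x^{-1}} \in F$ and $3^{x^{-1}} \notin F$, i.e.\ that $x$ carries two points of $F$ onto $1$ and $2$ and a moved point onto $3$ --- not, as you write, that $x$ sends $(1,2,3)$ onto a triple $(p_1,p_2,p_3)$ with $p_1,p_2 \in F$ and $p_3 \notin F$. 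The two conditions agree when $x$ is an involution (e.g.\ the last row, where $x = (1,f_1)(2,f_2)$), but not in general: for $F \cap \{1,2,3,4\} = \{2,4\}$ one has $x = c^{c^g} = (1,3,4)$, which sends $(1,2,3)$ to $(3,2,4)$ with $3 \notin F$, so your criterion would reject this correct row, whereas $x^{-1} = (1,4,3)$ sends $(1,2,3)$ to $(4,2,1)$ with $4,2 \in F$ and $1 \notin F$, as required. Since you leave the ten verifications as ``routine bookkeeping'' under this mis-stated criterion, the core of the proof is missing and, as described, would go wrong. The repair is easy: state the criterion with $x^{-1}$, or do as the paper does and rewrite each table entry as an explicit permutation (for instance $x = (1,2)(3,m)$ in the first row and $x = (2,3,4)$ for $F \cap \{1,2,3,4\} = \{1,3,4\}$, both obtained from $gc^2 = (3,4,\ldots,k)$), and then check each row directly.
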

\begin{proof}
If $r$ has two fixed points and a moved point in $\supp g$, the algorithm returns a $k_0$-cycle $\wt{r}$.
We want to show that $\wt{r}$ fixes the points $1$ and $2$ but moves the point $3$.
By Lemma~\ref{la:DFPOutput}, we have $F = \fix r \cap \supp g$.
Then the table defining $x$ looks as follows:
\begin{center}
\begin{tabular}{c|c}
$F \cap \{1,2,3,4\}$ & $x$ \\
\hline
$\{1,2,3,4\}$ or $\{1,2,3\}$ & $ (1,2)(3,m) $ \\
$\{1,2,4\}$ or $\{1,2\}$ & $ 1_G $ \\
$\{1,3,4\}$ & $ (2,3,4) $ \\
$\{1,3\}$ & $ (2,4,3) $ \\
$\{1,4\}$ or $\{1\}$ & $ (2,3,f_2) $ \\
$\{2,3,4\}$ or $\{2,4\}$ & $ (1,3,4) $ \\
$\{2,3\}$ & $ (1,4,3) $ \\
$\{2\}$ & $ (1,3,f_2) $ \\
$\{3,4\}$ or $\{3\}$ & $ (1,f_2)(2,3) $ \\
$\{4\}$ or $\emptyset$ & $ (1,f_1)(2,f_2) $
\end{tabular}
\end{center}
Thus, in each case $\wt{r} = r^x$ fixes $1$ and $2$ but not $3$.
Since $x \in \langle g, c \rangle$,
it fixes every element in $\{1, \dotsc, n\} - \supp g$, so
$(\supp r - \supp g) \subset \supp \wt{r}$ holds.
\end{proof}

\begin{lemma} \label{la:FPMRuntime}
Let $G$ be a finite group, $g,c,r \in G$ arbitrary elements and $k \in \mathbb{N}$.
$\algFPM$  with input $g,c,r,k$ runs in $\mcO( k \mu )$ time
and requires storage of a constant number of group elements.
\end{lemma}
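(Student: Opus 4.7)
The plan is to bound the runtime by going through each step of \algFPM and checking that none of them exceeds $\mcO(k\mu)$ group operations, then check the storage bound separately.

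First I would handle the construction of $F$. This requires $k$ invocations of \algDFP, one for each $j \in \{1, \dotsc, k\}$. By Lemma~\ref{la:DFPRuntime}, each call uses only a constant number of group operations and constant storage, so computing $F$ costs $\mcO(k\mu)$ time. Crucially, the elements $c^{(g^{j-3})}$ fed to \algDFP can be obtained iteratively by a single multiplication from the previous one, so we never need to store more than a constant number of group elements at this stage; the set $F$ itself is a set of integers, not group elements, and hence does not count against the storage budget.

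Next I would argue about the determination of $f_1$, $f_2$ and $m$, which requires only $\mcO(k)$ integer comparisons and no group operations. Then I would bound the cost of computing $x$. Inspecting the table in \algFPM, the most expensive expressions have the form $c^{(gc^2)^{e}\cdot \star}$ for some exponent $e \in \{m-3, f_1-3, f_2-3\}$, each of which is bounded above by $k$. Computing $(gc^2)^e$ by repeated multiplication, maintaining only the current partial power, uses $\mcO(k)$ multiplications and a constant number of stored group elements; the conjugations by $c$ or $c^g$ at the end add only a constant number of further multiplications. Producing $\wt{r} = r^x$ costs another $\mcO(\mu)$. Summing these contributions yields the claimed $\mcO(k\mu)$ runtime.

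For the storage bound, throughout the algorithm we keep only $g$, $c$, $r$, the current iterate of $c^{(g^{j-3})}$ used in the construction of $F$, the current partial power of $gc^2$ used to build $x$, together with $x$ and $\wt{r}$ themselves -- a constant number of group elements. The only anticipated subtlety is ensuring that $F$ is processed on the fly rather than assembled as a list of group elements, but since $F \subseteq \{1,\dotsc,k\}$ is a set of integers this causes no difficulty. Hence the lemma follows.
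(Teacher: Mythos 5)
Your argument is correct and follows essentially the same route as the paper: compute the conjugates $c^{(g^{j-3})}$ iteratively so that building $F$ costs $\mcO(k)$ group operations and constant storage, and observe that $x$ and $r^x$ add at most $\mcO(k)$ further operations (the paper computes $x$ slightly more cleverly in $\mcO(\log k)$ operations, but this makes no difference to the stated $\mcO(k\mu)$ bound).
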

\begin{proof}
This follows by standard arguments.
\details{%
By storing the previous element $c^{(g^{j-3})}$ during the computation of $F$, the next
element $c^{(g^{j-2})}$ can be computed in a constant number of group operations. Thus, computing $F$ requires
$\mcO( k )$ group operations. Computing the element $x$ requires at most $\mcO( \log k )$
group operations, yielding an overall runtime of $\mcO( k \mu )$.
}
\end{proof}

Using elements provided by $\algFPM$, the next algorithm appends new points to the cycle $g$.
Since $g$ will always be a cycle of odd length, new points can only be appended in pairs.
Because of this we need an element $s$, a `storage cycle', storing the first new point until
we encounter a second one.
The output $\wt{s}$ assumes the role of $s$ the next time $\algEC$ is called.

\begin{algorithm}[\algEC] \label{alg:EC}
\textit{Input:} Elements $g,c,r,s$ of a group $G$ and $k,k_0 \in \mathbb{N}$.\\
\textit{Output:} Two elements $\wt{g},\wt{s} \in G$ and $\wt{k} \in \mathbb{N}$.\\
\textit{Algorithm:}
\begin{enumerate}
\item
Set $\wt{g} := g$, $\wt{s} := s$ and $\wt{k} := k$.

\item
For each $1 \leq j < k_0$, set $x_j := c^{(r^j)}$.
If $[x_j,\wt{g}c^2] = 1_G$, then perform Step~\ref{step:checkPoint}.

\item \label{step:checkPoint}
If $\wt{s} = 1_G$, then set $\wt{s} := x_j$.
If $\wt{s} \neq 1_G$ and $\wt{s} \neq x_j$, then set $\wt{k} := \wt{k}+2$, $\wt{g} := \wt{g}\wt{s}^{(x_j^2)}$ and $\wt{s} := 1_G$.

\item
Return $\wt{g}$, $\wt{s}$ and $\wt{k}$.
\end{enumerate}
\end{algorithm}

\begin{lemma} \label{la:ECOutput}
Let $7 \le k_0 \le k \le n \in \mathbb{N}$, $c = (1,2,3)$, $g = (1,2,\ldots,k)$ and
$r \in \Sym_n$ a $k_0$-cycle fixing the points $1$ and $2$ and moving $3$.
Let $s \in \Sym_n$ be either the identity element or $s = (1,2,b)$ for some $b \in \{ 1, \ldots, n \} - \supp g$.
Let $\wt{g},\wt{s},\wt{k} := \algEC(g,c,r,s,k,k_0)$. Then $\wt{g}$ is a $\wt{k}$-cycle
matching~$c$, and $\supp r \cup \supp g \cup \supp s = \supp \wt{g} \cup \supp \wt{s}$.
\end{lemma}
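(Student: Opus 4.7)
Reduce to the normal form $c = (1, 2, 3)$, $g = (1, 2, \ldots, k)$, so on entry $\wt g$ starts as the cycle $(1, 2, 3, \ldots, k)$; write $b$ for the third point of $s$ when $s \ne 1_G$. The plan is to maintain, throughout the loop, three invariants: (I1) $\wt g$ is a $\wt k$-cycle whose cycle notation begins $(1, 2, 3, \ldots)$; (I2) $\wt s \in \{1_G\} \cup \{(1, 2, p) : p \notin \supp \wt g\}$; and (I3) after the $j$-th iteration, $\supp \wt g \cup \supp \wt s = \supp g \cup \supp s \cup \{3^{r^{j'}} : j' \le j \text{ and } 3^{r^{j'}} \notin \supp g\}$. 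The first observation is that, because $r$ fixes $1$ and $2$, $x_j = c^{r^j} = (1, 2, 3^{r^j})$, and the points $3^{r^j}$ for $j = 1, \ldots, k_0 - 1$ enumerate $\supp r \setminus \{3\}$ bijectively.

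The second step is to interpret the commutation test under (I1). A direct computation gives that $\wt g\,c^2$ is a single $(\wt k - 2)$-cycle on $\supp \wt g \setminus \{1, 2\}$; since $\wt k - 2 \ge 5 > 3$, the $3$-cycle $x_j$ commutes with $\wt g\,c^2$ precisely when their supports are disjoint, i.e.\ iff $3^{r^j} \notin \supp \wt g$. The core of the proof is the pair-up computation in Step~3: when $\wt s = (1, 2, p)$ and $x_j = (1, 2, q)$ with $p \ne q$ and both outside $\supp \wt g$, one computes $x_j^2 = (1, q, 2)$, conjugates to obtain $\wt s^{x_j^2} = (q, 1, p)$, and then checks that multiplying the cycle $\wt g = (1, 2, 3, \ldots, L)$ by $(q, 1, p)$ on the right routes the last point $L$ through $p$ and $q$ back to $1$, producing the cycle $(1, 2, 3, \ldots, L, p, q)$ of length $\wt k + 2$. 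This restores (I1) with $\wt k$ increased by $2$; resetting $\wt s := 1_G$ restores (I2); and (I3) gains exactly the two new points $p, q$. The storage branch preserves all three invariants trivially.

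The one delicate case is the guard $\wt s \ne x_j$. It fails exactly when $s = (1, 2, b)$ has not yet been consumed and $b = 3^{r^j}$ for this particular $j$; skipping is then correct since $b$ is already accounted for in $\supp \wt s$, and (I3) still holds because the set on the right-hand side is unchanged. Once the loop terminates, (I3) combined with $\supp r = \{3\} \cup \{3^{r^{j'}} : 1 \le j' \le k_0 - 1\}$ and $3 \in \supp g$ gives $\supp \wt g \cup \supp \wt s = \supp g \cup \supp s \cup (\supp r \setminus \supp g) = \supp r \cup \supp g \cup \supp s$, and (I1) guarantees that $\wt g$ is a $\wt k$-cycle of the shape $(1, 2, 3, \ldots)$, hence matches $c$. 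The main obstacle I anticipate is the explicit verification that $\wt g \cdot \wt s^{x_j^2}$ is the claimed extended cycle; everything else is bookkeeping and induction.
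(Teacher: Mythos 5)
Your proposal is correct and follows essentially the same route as the paper's proof: identify $x_j = (1,2,3^{r^j})$, read the commutation test with $\wt{g}c^2$ as "$3^{r^j}\notin\supp\wt{g}$", and verify that the pairing step turns $(1,2,\ldots,L)$ into $(1,2,\ldots,L,p,q)$, with the storage/skip cases handled as bookkeeping. Your explicit loop invariants just make the support equality (both inclusions) slightly more transparent than the paper's brief statement.
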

\begin{proof}
Let $r = (3,a_1,\ldots,a_{k_0-1})$ with $4 \le a_j \le n$. Then $x_j = (1,2,a_j)$,
so $x_j$ and $\wt{g}c^2$ commute if and only if $a_j \not\in \supp \wt{g}$. If, in this case,
$\wt{s}$ is the identity, the new point is stored in $\wt{s}$. If $\wt{s} = x_j$, the point is already stored in $s$. Otherwise
we find $\wt{s} = (1,2,b)$ for some $b \not\in ( \supp \wt{g} \cup \{a_j\} )$. Now, $\wt{g}$ is set to
$(1,2,\ldots,k,b,a_j)$, becoming a $\wt{k}$-cycle matching $c$.
\details{$g := gs^{(x^2)} = (1,2,\ldots,k)(1,b,a_j) = (1,2,\ldots,k,b,a_j)$}
Since all $a_j$ are treated in this manner, clearly
$\supp r \subset ( \supp \wt{g} \cup \supp \wt{s} )$ holds.
\end{proof}


\begin{lemma} \label{la:ECRuntime}
Let $G$ be a finite group, $g,c,r,s \in G$ arbitrary elements and $k,k_0 \in \mathbb{N}$.
Then $\algEC$ with input $g,c,r,s,k,k_0$ runs in $\mcO ( k_0 \mu )$ time and requires
storage of a constant number of group elements.
\end{lemma}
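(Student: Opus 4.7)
The plan is to account separately for the initialization (Step~1), the main loop (Steps~2 and~3), and the return (Step~4), and to show that the loop dominates. Steps~1 and~4 are clearly $\mcO(1)$, so the task reduces to bounding the cost of iterating $j$ from $1$ to $k_0-1$.

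Within each iteration, three things happen: the element $x_j = c^{(r^j)}$ is formed, the commutator $[x_j, \wt{g}c^2]$ is tested, and (conditionally) a single update to $\wt{s}$, $\wt{g}$ and $\wt{k}$ is performed. The key observation is that $x_j$ should \emph{not} be computed as a conjugation by $r^j$ from scratch; instead, we maintain the invariant that the variable holding $x_{j-1}$ is available at the start of iteration $j$, and update it in place via $x_j := x_{j-1}^{r}$, which costs $\mcO(1)$ group operations. The commutator test then needs the product $\wt{g}c^2$; since $\wt{g}$ only changes in Step~\ref{step:checkPoint}, we can also cache $\wt{g}c^2$ and rebuild it only when $\wt{g}$ is updated, so each test costs $\mcO(1)$ group operations. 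The assignment $\wt{g} := \wt{g}\wt{s}^{(x_j^2)}$ in Step~\ref{step:checkPoint}, as well as the other updates, plainly use a constant number of group operations.

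Putting these pieces together gives a per-iteration cost of $\mcO(\mu)$ and hence a total runtime of $\mcO(k_0 \mu)$, as required.

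For storage, at any moment we keep only the inputs $g,c,r,s$, the running values $\wt{g},\wt{s}$, the current $x_j$ (and the cached $\wt{g}c^2$), plus a handful of scratch products formed inside the commutator computation and the update step. This is a constant number of group elements, independent of $k_0$. There is no real obstacle here; the only subtlety is the standard trick of carrying $x_{j-1}$ between iterations so that $x_j$ is obtained by a single conjugation rather than by recomputing $r^j$.
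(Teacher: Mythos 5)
Your proof is correct and matches the intended argument: the paper dismisses this as immediate, the implicit point being exactly your observation (used explicitly in the neighbouring runtime lemmas) that $x_j = x_{j-1}^r$ is maintained incrementally so each of the $k_0-1$ iterations costs $\mcO(1)$ group operations and only a constant number of elements is stored.
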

\begin{proof}
This is immediate.
\end{proof}

\subsection{Construction of standard generators} \label{sec:mainalg}

Let $G$ be a black-box group isomorphic to an alternating or symmetric group,
$c \in G$ a $3$-cycle and $g \in G$ a $k$-cycle matching $c$.
The first algorithm in this section uses these elements to construct standard generators
of the alternating group of the same degree as $G$.

The main algorithm \algMain ties up all algorithms in this chapter and results of \cite{Bealsetal}
to either constructively
recognise the group or decide that it is not isomorphic to an alternating or symmetric group
with high probability.

\begin{algorithm}[\algGens] \label{alg:Gens}
\textit{Input:} A group $G$, elements $g,c \in G$, $0<\veps<1$ and $k,N \in \mathbb{N}$.\\
\textit{Output:} Elements $\wt{g},\wt{c} \in G$ and $\wt{k} \in \mathbb{N}$ or \textsc{fail}.\\
\textit{Algorithm:}
\begin{enumerate}
\item \label{step:GensInit}
Set $s := 1_G$,
$k_0 := k - 2$,
$r := gc^2$,
$\wt{k} := k$
and $\wt{g} := g$.
\item \label{step:GensConjs}
Choose a list $R$ of $\lceil \log (10/3)^{-1} ( \log N + \log \veps^{-1} ) \rceil$
random conjugates of~$r$.
For each $x \in R$, perform Step~\ref{step:adjustAppend}.
\item \label{step:adjustAppend}
Set $m := \algFPM(\wt{g},c,x,\wt{k})$. If $m = \textsc{fail}$, then return $\textsc{fail}$.\\
Set $\wt{g},s,\wt{k} := \algEC(\wt{g},c,m,s,\wt{k},k_0)$. If $\wt{k} > N$, then return $\textsc{fail}$.
\item \label{step:GensStd}
If $s = 1_G$, set $\wt{g} := c^2\wt{g}$ and $\wt{c} := c$. Otherwise set $\wt{k} := \wt{k} + 1$, $\wt{g} := \wt{g}s$ and $\wt{c} := s$.
\item \label{step:GensPres}
Check whether $(\wt{g},\wt{c})$ satisfies the presentation~\eqref{eq:PresAltEven} or~\eqref{eq:PresAltOdd} for $\Alt_k$.
If that is not the case, then return \textsc{fail}. Otherwise return $\wt{g},\wt{c},\wt{k}$.
\end{enumerate}
\end{algorithm}

\begin{lemma} \label{la:GensOutput}
Let $9 \le k \le n \le N \in \mathbb{N}$, $k \ge 3n/4$, $G \in \{ \Sym_n, \Alt_n \}$, $c \in G$ a $3$-cycle,
$g \in G$ a $k$-cycle matching $c$ and $0 < \veps < 1$.
Then, with probability at least $1-\veps$, we find
$\wt{g},\wt{c},\wt{k} := \algGens(G,g,c,\veps,k,N) \neq \textsc{fail}$ such that
$\wt{k} = n$ and $\wt{g},\wt{c}$ are standard generators for $\Alt_n$.
\end{lemma}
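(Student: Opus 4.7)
The plan is to decompose the probability of failure into (i) the probabilistic risk that not every point of $\{1, \ldots, n\}$ is discovered during the $T$-iteration loop, and (ii) a collection of deterministic sub-checks; I would show (ii) always succeeds under the hypotheses and that (i) occurs with probability $\le \varepsilon$. To set up, I would fix $\lambda$ so that $\lambda(c) = (1,2,3)$ and $\lambda(gc^2) = (3, 4, \ldots, k)$ (possible because $g$ matches $c$), making $r = gc^2$ a $(k-2)$-cycle fixing $\{1,2\}$. Since $k$ is odd and $n - (k-2) \ge 2$, the $\Sym_n$-conjugacy class of $r$ does not split in $\Alt_n$, so each random conjugate $x \in R$ is uniformly distributed on the set of $(k-2)$-cycles in $G$ in either case $G = \Sym_n$ or $G = \Alt_n$. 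Induction combined with Lemma~\ref{la:ECOutput} then maintains the loop invariant that $\wt{g}$ is a $\wt{k}$-cycle matching $c$ and $s$ is either $1_G$ or a suitable $3$-cycle outside $\supp \wt{g}$; in particular the discovered set equals $\supp \wt{g} \cup \supp s$.

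For the deterministic failure modes in Step~\ref{step:adjustAppend}, I would argue that \algFPM always succeeds: $|\supp x| = k-2$ together with $\wt{k} \ge k$ gives $|\fix x \cap \supp \wt{g}| \ge 2$, while $k \ge 3n/4$ combined with $\wt{k} \le n$ gives $|\supp x \cap \supp \wt{g}| \ge 2k - n - 2 \ge n/2 - 2 \ge 1$ for $n \ge 9$; hence Lemma~\ref{la:FPMOutput} applies. The test $\wt{k} > N$ is likewise vacuous because $\wt{k}$ is bounded by the number of discovered points of the underlying set, which is at most $n \le N$.

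The probabilistic core, which I expect to be the main obstacle, is to show that all $n$ points are discovered within the $T := \lceil \log(10/3)^{-1}(\log N + \log \varepsilon^{-1})\rceil$ iterations with probability $\ge 1-\varepsilon$. For each unseen point $p \in \{1, \ldots, n\} \setminus \supp g$, the probability that $p \in \supp x_i$ in a single iteration equals $(k-2)/n$, and I would show that $(k-2)/n \ge 7/10$ using both $k \ge 3n/4$ and $k \ge 9$ to cover small-$n$ cases where the first bound alone is insufficient. Independence across iterations then yields $\Pr[p \text{ never discovered}] \le (3/10)^T$, and a union bound over the at most $N$ undiscovered points bounds the failure probability by $N(3/10)^T \le \varepsilon$ for the prescribed $T$.

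Finally, assuming all $n$ points are discovered, I would verify that Step~\ref{step:GensStd} produces standard generators. Since \algEC preserves the odd parity of $\wt{k}$, one has $s = 1_G$ if and only if $n$ is odd. A direct computation in $\Sym_n$ shows that $c^2 \wt{g} = (\wt{g}c^2)^c$ is an $(n-2)$-cycle in the $n$-odd case and $\wt{g}s$ is a disjoint product of an $(n-2)$-cycle and a transposition in the $n$-even case; both pairs $(c^2 \wt{g}, c)$ and $(\wt{g}s, s)$ are $\Sym_n$-conjugate to the example standard pairs from Section~\ref{sec:prelims} and therefore satisfy the presentation~\eqref{eq:PresAltEven} or~\eqref{eq:PresAltOdd}. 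Hence Step~\ref{step:GensPres} accepts and the algorithm returns $(\wt{g}, \wt{c}, n)$ with $(\wt{g}, \wt{c})$ standard generators for $\Alt_n$.
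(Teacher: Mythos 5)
Your proposal is correct and follows the same skeleton as the paper's proof: you show the deterministic checks cannot fail (each conjugate of $r=gc^2$ has at least two fixed points and at least one moved point inside $\supp\wt{g}$, so Lemma~\ref{la:FPMOutput} applies, and the invariant from Lemmas~\ref{la:FPMOutput} and~\ref{la:ECOutput} makes $\supp\wt{g}\cup\supp s$ absorb the supports of all processed conjugates), and the only probabilistic event is failure of the $T$ conjugates to cover $\{1,\dots,n\}$. Where you differ is the probabilistic core: the paper invokes Theorem~\ref{thm:CommonFPs} (no common fixed point of $t$ random $k_0$-cycles with $k_0\ge\lceil 7n/10\rceil$), whose inclusion--exclusion proof ultimately reduces to the estimate $n(1-k_0/n)^t\le\veps$; your per-point independence-plus-union-bound argument yields exactly that estimate directly, so your route is more elementary and self-contained, while the paper's buys a reusable standalone result. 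You also fill in details the paper dismisses as ``easy to check'': the non-splitting of the class of $r$ in $\Alt_n$ (so conjugates are uniform over all $(k-2)$-cycles in either case), the parity bookkeeping giving $s=1_G$ iff $n$ is odd, and the conjugacy of the returned pair to the model standard generators; these all check out. One caveat you share with the paper: the inequality $(k-2)/n\ge 7/10$ (the paper's $k_0\ge\lceil 7n/10\rceil$) does not actually follow from the bare hypotheses $k\ge\max(3n/4,9)$ for all $11\le n\le 39$ (e.g.\ $(n,k)=(12,9)$ gives $7/12<7/10$), so those degrees need a separate check or a slightly adjusted constant; since this is inherited from the paper's own argument, it is not a gap introduced by your proof.
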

\begin{proof}
First note that $k_0 \ge \lceil (7/10) n\rceil$ and $r$ is a $k_0$-cycle, so
the supports of $\wt{g}$ and a random conjugate $x$ of $r$ always have a common moved point.
Furthermore, $x$ has at least two fixed points in $\supp \wt{g}$
since $k = k_0+2$,
so the algorithm cannot fail in Step~\ref{step:adjustAppend}.
Lemmas~\ref{la:FPMOutput} and~\ref{la:ECOutput}
ensure that after Step~\ref{step:GensConjs} the set $\supp \wt{g} \cup \supp s$ contains the supports of all $x \in R$.
Thus, by Theorem~\ref{thm:CommonFPs}, we find that with probability at least $1-\veps$ the elements $\wt{g}$ and $s$ have no common
fixed point on $\{1,\ldots,n\}$. It is easy to check that we return the correct degree and standard generators.
\end{proof}

\begin{lemma} \label{la:GensRuntime}
Let $G$ be a group, $g,c \in G$ arbitrary elements, $0 < \veps < 1$ and $k,N \in \mathbb{N}$. Then $\algGens$
with input $G,g,c,\veps,k,N$ runs in $\mcO\left( N (\log N + \log \veps^{-1} ) (\mu + \rho) \right)$ time and requires storage
of a constant number of group elements.
\end{lemma}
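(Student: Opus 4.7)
The plan is to simply walk through the five steps of Algorithm~\algGens, bound the cost of each step using the runtime lemmas already established for the subroutines, and then sum the contributions. I will also argue storage by noting that each subroutine uses only constant auxiliary storage and that \algGens{} itself only keeps a bounded collection of named elements ($s$, $r$, $\wt g$, $\wt c$, $m$, $x$ and a few auxiliaries) at any one time.

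Step~\ref{step:GensInit} is clearly $\mcO(\mu)$ and uses constant storage. In Step~\ref{step:GensConjs} we draw $|R| = \lceil \log(10/3)^{-1}(\log N + \log \veps^{-1}) \rceil = \mcO(\log N + \log\veps^{-1})$ random conjugates, each requiring one random element and $\mcO(1)$ group operations, contributing $\mcO((\log N + \log\veps^{-1})(\mu+\rho))$. Crucially, the conjugates can be processed one at a time inside the loop, so only a constant number need be stored simultaneously.

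The work is dominated by Step~\ref{step:adjustAppend}, which is executed once per $x \in R$. One call to $\algFPM(\wt g, c, x, \wt k)$ costs $\mcO(\wt k \mu) = \mcO(N\mu)$ by Lemma~\ref{la:FPMRuntime}, and one call to $\algEC(\wt g, c, m, s, \wt k, k_0)$ costs $\mcO(k_0 \mu) = \mcO(N\mu)$ by Lemma~\ref{la:ECRuntime}; both use only constant storage. Summing over $|R|$ iterations yields $\mcO(N(\log N + \log\veps^{-1})\mu)$ for Step~\ref{step:adjustAppend}, which already matches the claimed bound.

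Step~\ref{step:GensStd} is $\mcO(\mu)$. The only potential obstacle is Step~\ref{step:GensPres}, because the Carmichael presentations \eqref{eq:PresAltEven} and \eqref{eq:PresAltOdd} contain $\Theta(n)$ relations of the form $(t^{(-1)^k} s^{-k} t s^k)^2$ whose naive evaluation would cost $\Theta(n^2)$ group operations, exceeding the target. The standard trick is to evaluate the relations incrementally: maintain the running conjugate $u_k := s^{-k} t s^k$ as $u_{k+1} = s^{-1} u_k s$, so that each new relation costs only $\mcO(1)$ multiplications, and checking the short relations $s^{n-2}$, $t^3$, $(st)^{n-1}$ costs $\mcO(N\mu)$. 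Hence Step~\ref{step:GensPres} runs in $\mcO(N\mu)$ time with constant storage. Adding all contributions gives the stated complexity $\mcO(N(\log N + \log\veps^{-1})(\mu+\rho))$ and constant storage.
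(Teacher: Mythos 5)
Your proof is correct and follows essentially the same route as the paper: bound the $|R| = \mcO(\log N + \log \veps^{-1})$ iterations, each costing $\mcO(N\mu)$ via Lemmas~\ref{la:FPMRuntime} and~\ref{la:ECRuntime} (using $\wt{k} \le N$ at every call), plus $\mcO(N)$ group operations for the presentation check. The only difference is that you justify the $\mcO(N)$ presentation-check cost directly by the incremental-conjugation trick, whereas the paper simply cites \cite[Lemma 4.4]{Bealsetal} for that bound.
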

\begin{proof}
The cost to check whether a presentation for $\Alt_k$ is satisfied requires $\mcO( N )$ group
operations by \cite[Lemma 4.4]{Bealsetal}.
At any call of \algFPM and \algEC we have $k \le N$.
Thus, Lemmas~\ref{la:FPMRuntime} and~\ref{la:ECRuntime} yield the claimed runtime.
\end{proof}

We can now present the main algorithm and prove the main Theorem~\ref{thm:main}.

\begin{algorithm}[\algMain] \label{alg:Main}
\textit{Input:} A group $G = \langle X \rangle$, $0<\veps<1$ and $N \in \mathbb{N}$.\\
\textit{Output:} A constructive isomorphism or $\textsc{fail}$.\\
\textit{Algorithm:}
\begin{enumerate}
\item \label{step:MainInit}
Set $T := \lceil \log_2 \veps^{-1} \rceil$.
\item \label{step:MainTCs}
If $T = 0$, then return \textsc{fail}. Otherwise set $T := T-1$ and compute $R := \algTC(G,1/4,N)$.
If $R = \textsc{fail}$, then return \textsc{fail}.
\item \label{step:MainNextCand}
If $R = \emptyset$, go to Step~\ref{step:MainTCs}. Otherwise choose $c \in R$ and set $R := R - \{c\}$.
\item \label{step:MainLC}
Set $\ell := \algLC(G,c,1/8,N)$. If $\ell = \textsc{fail}$, go to Step~\ref{step:MainNextCand}.
Otherwise set $k,g := \ell \in \mathbb{N} \times G$.
\item \label{step:MainGens}
Set $\ell := \algGens(G,g,c,1/8,k,N)$. If $\ell = \textsc{fail}$, go to Step~\ref{step:MainNextCand}.
Otherwise set $g,c,n := \ell \in G \times G \times \mathbb{N}$.
\item \label{step:MainCheck}
Using methods described in \cite{Bealsetal}, check whether $G$ is isomorphic to $\Alt_n$ or $\Sym_n$. If that is the case,
then return the constructive isomorphism computed during the check. Otherwise go to Step~\ref{step:MainNextCand}.
\end{enumerate}
\end{algorithm}

\begin{proof}[Proof of Theorem~\ref{thm:main}]
For the first part of the statement, consider Steps~\ref{step:MainTCs} through~\ref{step:MainCheck}.
Note that \algTC cannot fail if $G$ is an alternating or symmetric group of degree at most~$N$,
so by Lemma~\ref{la:TCOutput} we obtain a set $R$ containing a $3$-cycle with probability at least $3/4$. Thus, without loss of
generality, let $c \in R$ be a $3$-cycle. Using Lemma~\ref{la:LCOutput}, we find, with probability at least $7/8$,
that Step~\ref{step:MainLC} constructs a $k$-cycle matching $c$ with $k \ge \max(3n/4, 9)$.
Now, by Lemma~\ref{la:GensOutput}, Step~\ref{step:MainGens} returns the correct degree and standard generators with
probability at least $7/8$.
Step~\ref{step:MainCheck} always returns a correct answer, cf. \cite[Lemma 5.5 and proof of Theorem 1.2(b)]{Bealsetal}.
Thus, the probability to succeed in one pass is at least $(3/4)\cdot(7/8)^2 > 1/2$.
We repeat this procedure $\lceil \log_2 \veps^{-1} \rceil$ times to obtain the claimed overall probability.
\details{
\[
	\prob( \textsc{fail} ) < (1/2)^{ \lceil \log_2 \veps^{-1} \rceil } \le 2^{-\log_2 \veps^{-1} } = \veps.
\]
}

We now prove the second claim.
Steps~\ref{step:MainTCs} through~\ref{step:MainCheck} are repeated up to $\lceil \log_2 \veps^{-1} \rceil$ times.
During one such pass we execute Step~\ref{step:MainTCs} only once and Steps~\ref{step:MainLC}
through~\ref{step:MainCheck} up to $|R|$ times.
By Lemma~\ref{la:TCOutput} we have $|R| \le c \log N$ for some constant $c \in \mathbb{R}$.
In Step~\ref{step:MainGens}, note that $k,n \le N$ must hold. Then the claim follows by Lemmas~\ref{la:TCRuntime},
\ref{la:LCRuntime}, \ref{la:GensRuntime} and~\cite[Section 5]{Bealsetal}.
\end{proof}

\section{Probability estimates}\label{sec:techno}

This section contains theoretical results which are used to
establish lower bounds for the success probability of 
the algorithm.
Several results are of independent interest. We already mentioned
the probability estimates for small support involutions in the introduction.
Another noteworthy result is a lower bound on the proportion on $k$-cycles
in $\Sym_n$ having a common fixed point, cf.~Theorem~\ref{thm:CommonFPs}.

Note that if $f$ is a continuous and decreasing function on the
interval $[a,b+1]$, then
\begin{equation}\label{eq:int1}
\int_a^{b+1} f(x) \dint{x} \le \sum_{k=a}^b f(k).
\end{equation}

We will also use the following useful result several times.
\begin{lemma}[Chernoff's bound, {\cite[Lemma 2.3.3]{akos}}]
\label{la:chernoff}
Let $X_1, X_2, \ldots$ be a sequence of $0$-$1$ valued random variables
such that $\prob(X_i = 1) \ge p$ for any values of the previous $X_j$
(but $X_i$ may depend on these $X_j$). Then, for all integers $T$ and $0 < \delta < 1$,
\[
    \prob\left(\sum_{i = 1}^T X_i \le (1-\delta)pT\right) \le e^{-\delta^2pT/2}.
\]
\end{lemma}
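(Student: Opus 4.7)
The plan is to use the Chernoff / exponential Markov technique, adapted to cope with the dependence among the $X_i$. For any $t>0$, I would first bound the moment generating function $\mathbb{E}[e^{-t\sum_{i=1}^T X_i}]$ by iterated conditioning. Writing $p_i := \prob(X_i = 1 \mid X_1,\ldots,X_{i-1})$, I have $\mathbb{E}[e^{-tX_i} \mid X_1,\ldots,X_{i-1}] = 1 - p_i(1-e^{-t})$. Since $t > 0$ gives $1 - e^{-t} > 0$, the quantity $1 - q(1-e^{-t})$ is decreasing in $q$, and by hypothesis $p_i \ge p$; hence this conditional expectation is at most $p e^{-t} + (1-p)$. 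Taking the tower property and iterating from $i = T$ down to $i = 1$ yields
\[
\mathbb{E}\!\left[e^{-t\sum_{i=1}^T X_i}\right] \le \bigl(pe^{-t} + 1-p\bigr)^T.
\]

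Next I would apply Markov's inequality to the non-negative random variable $e^{-t\sum X_i}$:
\[
\prob\!\left(\sum_{i=1}^T X_i \le (1-\delta)pT\right)
= \prob\!\left(e^{-t\sum X_i} \ge e^{-t(1-\delta)pT}\right)
\le e^{t(1-\delta)pT}\bigl(pe^{-t} + 1-p\bigr)^T.
\]
Using $\log(1-x) \le -x$ for $x \in [0,1)$ with $x = p(1-e^{-t})$, the right-hand side is bounded by $\exp\!\bigl(t(1-\delta)pT - pT(1-e^{-t})\bigr)$. I would then optimise over $t$; setting $t = -\log(1-\delta) > 0$ gives $e^{-t} = 1-\delta$, and the exponent becomes
\[
-pT\bigl[(1-\delta)\log(1-\delta) + \delta\bigr].
\]

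Finally, the inequality reduces to the elementary estimate $(1-\delta)\log(1-\delta) + \delta \ge \delta^2/2$ for $0 < \delta < 1$. I would verify this by letting $f(\delta) = (1-\delta)\log(1-\delta) + \delta - \delta^2/2$, noting $f(0) = 0$ and computing $f'(\delta) = -\log(1-\delta) - \delta = \sum_{k\ge 2} \delta^k/k \ge 0$, so $f \ge 0$ on $[0,1)$. Combining these three steps yields the claimed bound $e^{-\delta^2 pT/2}$.

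The main obstacle is the dependence: the usual textbook proof assumes independent variables so that the m.g.f.\ factorises. Handling the dependent case cleanly requires the monotonicity observation for $1 - q(1-e^{-t})$ in $q$, applied under conditioning on the past; once this step is in place, the remainder is the standard Chernoff optimisation together with a small convexity lemma.
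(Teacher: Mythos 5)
Your proof is correct. Note, however, that the paper does not prove this lemma at all: it is quoted verbatim from the cited reference (Seress, Lemma 2.3.3) and used as a black box, so there is no internal proof to compare against. Your argument is the standard exponential-moment one, and you handle the only delicate point -- the dependence of $X_i$ on the earlier variables -- correctly: bounding $\mathbb{E}[e^{-tX_i}\mid X_1,\ldots,X_{i-1}] = 1 - p_i(1-e^{-t}) \le 1 - p(1-e^{-t})$ for $t>0$ and then iterating via the tower property is exactly what makes the factorised bound $(pe^{-t}+1-p)^T$ legitimate without independence. The subsequent optimisation at $t=-\log(1-\delta)$ and the elementary inequality $(1-\delta)\log(1-\delta)+\delta \ge \delta^2/2$ (verified via $f'(\delta) = -\log(1-\delta)-\delta \ge 0$) are both correct, so the stated bound $e^{-\delta^2 pT/2}$ follows; this is essentially the proof one finds in the cited source.
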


\subsection{Small support involutions}

The aim of this section is to compute the proportion of even-order
elements in $\Alt_n$ and $\Sym_n$ which power to an involution with small support.
These involutions are used in the algorithm to construct $3$-cycles
(cf.~Algorithm~\ref{alg:tc} and Corollary~\ref{cor:TCgivenNC}).
To achieve this, we compute lower bounds for the proportion $u_{b}(n)$
of elements in $\Sym_n$ and the proportion $\wt{u}_b(n)$ of elements
in $\Alt_n$ which contain $jb$ points
in cycles of lengths divisible by $b$
but not by $2b$ and the remaining $(n-jb)$ points
in cycles of length not divisible by $b$ for some integer $j$
satisfying
$1 \le j \le
4\sqrt{n}/(3b)
$.
To obtain involutions, we choose $b$ to be a certain power of two.

Let $t_b(bn)$ denote the proportion of all permutations in $\Sym_{bn}$ such
that all cycle lengths are a multiple of $b$ but no cycle length is a
multiple of $2b$. Define $t_b(0) := 1$.
Observe that $t_b(b) = 1/b,$ since the only allowable
permutations are the  $b$-cycles and the
proportion of $b$-cycles in $\Sym_b$ is $1/b$.
The proof of the following lemma refines the ideas in \cite{NPPY} to obtain the
explicit lower bound given below.

\begin{lemma} \label{la:tb}
Let $ n, b \in \mathbb{N}$.
Then $t_b(bn) \ge \left(b^2 3^{1/(2b)} n^{1-1/(2b)} \right)^{-1}$.
\end{lemma}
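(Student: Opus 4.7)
My plan is to derive a recurrence for $a_n := t_b(bn)$ by conditioning on the cycle through a fixed point, and then close the bound by strong induction on $n$, separating out the smallest-index term in each parity class. Writing $\alpha := 1/(2b)$ so that $2\alpha b = 1$, the target reads $a_n \ge 1/(b^2 3^\alpha n^{1-\alpha})$.

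For a uniform random permutation of $bn$ points, the cycle through point $1$ has length $\ell$ with probability $1/(bn)$ for each $\ell \in \{1, \dotsc, bn\}$; for the cycle-length constraint to hold, $\ell$ must equal $bj$ with $j$ odd, and conditional on this the restriction to the remaining $b(n-j)$ points is uniform random in the same class. This yields
\[
bn \cdot a_n \;=\; \sum_{j \text{ odd},\, 1 \le j \le n} a_{n-j}.
\]
Direct computation gives the base cases $a_0 = 1$, $a_1 = 1/b$, and $a_2 = 1/(2b^2)$ (the valid permutations in $\Sym_{2b}$ are exactly the pairs of disjoint $b$-cycles), each of which satisfies the claimed bound via $3^\alpha \ge 2^\alpha \ge 1$.

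For $n \ge 3$ I proceed by strong induction. If $n$ is even, the index $m := n - j$ ranges over the odd values in $[1, n-1]$; I isolate the $m = 1$ term (using its \emph{exact} value $1/b$) and apply the induction hypothesis only for $m \ge 3$. Since $x \mapsto x^{\alpha-1}$ is decreasing, the width-$2$ rectangle comparison (an instance of \eqref{eq:int1}) gives
\[
\sum_{\substack{m \text{ odd}\\ 3 \le m \le n-1}} m^{\alpha-1} \;\ge\; \tfrac{1}{2}\int_{3}^{n+1} x^{\alpha-1}\dint{x} \;=\; \frac{(n+1)^\alpha - 3^\alpha}{2\alpha},
\]
and substituting $1/(2\alpha) = b$ collapses the inductive estimate into
\[
bn \cdot a_n \;\ge\; \frac{1}{b} \;+\; \frac{(n+1)^\alpha - 3^\alpha}{b \cdot 3^\alpha} \;=\; \frac{(n+1)^\alpha}{b \cdot 3^\alpha} \;\ge\; \frac{n^\alpha}{b \cdot 3^\alpha},
\]
which is precisely the claim after dividing by $bn$. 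If $n$ is odd, I instead isolate the $j = n$ term (contributing $a_0 = 1$ exactly) and apply induction to even $m \in [2, n-1]$; the analogous integral bound $((n+1)^\alpha - 2^\alpha)/(2\alpha)$ delivers
\[
bn \cdot a_n \;\ge\; 1 + \frac{(n+1)^\alpha - 2^\alpha}{b \cdot 3^\alpha},
\]
and the inequality $b \cdot 3^\alpha - 2^\alpha \ge 0 \ge n^\alpha - (n+1)^\alpha$ (valid because $b \ge 1$, $3 > 2$, and $x \mapsto x^\alpha$ is nondecreasing) closes the case.

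The main obstacle is that a naive induction based on the collapsed two-term recurrence $a_n = a_{n-1}/(bn) + (n-2)a_{n-2}/n$ (obtained by differencing consecutive summation recurrences) does \emph{not} close for any bound of shape $C n^{\alpha-1}$: by convexity of $x \mapsto x^{\alpha-1}$ one has $n^\alpha - (n-2)^\alpha > 1/(b(n-1)^{1-\alpha})$, so the required inequality reverses. The resolution is to retain the full summation recurrence and use the exact values $a_0 = 1$ or $a_1 = 1/b$ in place of the much weaker inductive bounds at those indices; the slack thus saved is exactly what manufactures the $3^\alpha$ in the denominator of the claim.
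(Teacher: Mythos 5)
Your proof is correct and follows essentially the same route as the paper: the same recurrence $bn\,t_b(bn)=\sum_{j\ \mathrm{odd}}t_b((n-j)b)$ obtained by conditioning on the cycle through a fixed point, induction on $n$ split by parity, isolation of the exact small term ($t_b(0)=1$ or $t_b(b)=1/b$), and an integral comparison for the remaining sum. The differences (carrying $a_2$ as an extra base case, the width-$2$ rectangle form of the integral bound, and the closing aside about the two-term recurrence) are cosmetic.
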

\begin{proof}
The proof is by induction on $n$.
For $n=1$ we have $t_b(b) = 1/b$ and the claim holds.
Consider $t_{b}((n+1)b)$.
If $1$ lies in a cycle of length $jb$,
then $j$ has to be odd. Choosing $jb-1$ out of
$(n+1)b-1$ points and arranging them yields
$\frac{((n+1)b-1)!}{((n-j+1)b)!}$ such cycles.
On the remaining $(n+1-j)b$ points we may choose any permutation
whose cycles have lengths divisible by $b$ but not by $2b$.
We obtain the recursion
\begin{align*}
	((n+1)b)! \cdot t_b((n+1)b)
	\detailsalign{=
	\sum_{\begin{subarray}{c} j=1 \\ j~\text{odd} \end{subarray}}^{n+1}
	\frac{((n+1)b-1)!}{((n-j+1)b)!}  ((n+1-j)b)! \cdot t_{b}((n+1-j)b)}
	&=
	\sum_{\begin{subarray}{c} j=1 \\ j~\text{odd} \end{subarray}}^{n+1}
	((n+1)b-1)! \cdot t_{b}((n+1-j)b),
\end{align*}
and thus
\[
	(n+1)b \cdot t_b((n+1)b) =
	\sum_{\begin{subarray}{c} j=1 \\ j~\text{odd} \end{subarray}}^{n+1}
	t_{b}((n+1-j)b).
\]
Let us first assume that $n$ is even. The induction hypothesis yields
\begin{align*}
	(n+1)b \cdot t_b((n+1)b)
	\detailsalign{=
	1 + \sum_{\begin{subarray}{c} j=1 \\ j~\text{odd} \end{subarray}}^{n-1}
	t_{b}((n+1-j)b)}
	& \ge
	1 + \sum_{\begin{subarray}{c} j=1 \\ j~\text{odd} \end{subarray}}^{n-1}
	\frac{1}{b^23^{1/(2b)}(n+1-j)^{1-1/(2b)}}\\
	& =
	1 + \sum_{k=1}^{n/2} \frac{1}{b^23^{1/(2b)}(2k)^{1-1/(2b)}}\\
	&\ge
	1 + \left(\frac{2}{3}\right)^{1/(2b)}
	\int_{1}^{n/2+1} \frac{1}{b^22x^{1-1/(2b)}} \dint{x}\\
	&=
	1 + \left(\frac{2}{3}\right)^{1/(2b)} \frac{1}{b} \left( x^{1/(2b)}\bigg|_{x=1}^{n/2+1} \right) \\
	\detailsalign{=
	1 + \left(\frac{2}{3}\right)^{1/(2b)} \frac{1}{b}
	\left((n/2+1)^{1/(2b)} - 1 \right) }
	&\ge
	\frac{1}{b3^{1/(2b)}}
	\left((n+2)^{1/(2b)} - 2^{1/(2b)} + b3^{1/(2b)} \right) \\
	&\ge
	\frac{1}{b3^{1/(2b)}}(n+1)^{1/(2b)}.
\end{align*}
\details{
Now suppose $n$ is odd. Since $t_b(b) = 1/b$ we get
\begin{align*}
	(n+1)b \cdot t_b((n+1)b)
	&=
	\frac{1}{b} + \sum_{\begin{subarray}{c} j=1 \\ j~\text{odd} \end{subarray}}^{n-2}
	t_{b}((n+1-j)b)\\
	&\ge
	\frac{1}{b} + \sum_{\begin{subarray}{c} j=1 \\ j~\text{odd} \end{subarray}}^{n-2}
	\frac{1}{b^23^{1/(2b)}(n+1-j)^{1-1/(2b)}}\\
	&=
	\frac{1}{b} + \sum_{k=1}^{(n-1)/2} \frac{1}{b^23^{1/(2b)}(2k+1)^{1-1/(2b)}}\\
	&\ge
	\frac{1}{b} + \frac{1}{b^23^{1/(2b)}}
	\int_{1}^{(n+1)/2} \frac{1}{(2x+1)^{1-1/(2b)}} \dint{x}\\
	&=
	\frac{1}{b} + \frac{1}{b3^{1/(2b)}}
	\left( (2x+1)^{1/(2b)}\bigg|_{x=1}^{(n+1)/2} \right)\\
	&=
	\frac{1}{b} + \frac{1}{b3^{1/(2b)}}
	\left( (n+2)^{1/(2b)} - 3^{1/(2b)} \right) \\
	&\ge
	\frac{1}{b3^{1/(2b)}}(n+1)^{1/(2b)}.
\end{align*}
Therefore we get in either case that
}%
A similar estimation holds for odd $n$, using $t_b(b) = 1/b$;
in either case we see
\[
	t_b((n+1)b) \ge \frac{1}{b^23^{1/(2b)}}(n+1)^{1/(2b)-1},
\]
so the result follows by induction.
\end{proof}

\details{
The next lemma allows us to apply
inequality~\eqref{eq:int1} in the
proof of Lemma~\ref{la:ub}.
}

\begin{lemma}\label{la:fDecrease}
Let
$f(x) := \left( (n/b-x)^{1/b} x^{1-1/(2b)} \right) ^ {-1}$,
where $n\ge404$ and $1 \le b \le 4\sqrt{n}/3$.
Then $f$ is positive and decreasing for $0 < x \le 4\sqrt{n}/(3b)+1$.
\end{lemma}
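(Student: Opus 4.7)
The plan is to write $f = 1/g$ where $g(x) = (n/b - x)^{1/b} \cdot x^{1-1/(2b)}$, and separately verify that $g$ is positive and strictly increasing on the interval, from which the claim for $f$ follows immediately. Since $b \ge 1$ forces the exponents $1/b$ and $1 - 1/(2b)$ to be positive, both ``monotonicity of each factor'' and ``taking logs'' are well-behaved operations.

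For positivity I will just check that $n/b - x > 0$ throughout the interval. This is equivalent to $x < n/b$, and since $x \le 4\sqrt{n}/(3b) + 1$, it suffices to verify $4\sqrt{n}/3 + b < n$. The hypothesis $b \le 4\sqrt{n}/3$ bounds the left-hand side by $8\sqrt{n}/3$, and $8\sqrt{n}/3 < n$ holds as soon as $n > 64/9$, which is easily ensured by $n \ge 404$.

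For monotonicity I will use logarithmic differentiation. We have
\[
\frac{g'(x)}{g(x)} \;=\; \frac{1 - 1/(2b)}{x} - \frac{1/b}{n/b - x},
\]
and a direct rearrangement shows that this is positive iff
\[
x \;<\; \frac{(2b-1)\,n}{b\,(2b+1)}.
\]
Thus it remains to verify that $4\sqrt{n}/(3b) + 1 < (2b-1)n / (b(2b+1))$ for all admissible $b$. Clearing denominators reduces this to
\[
H(b) \;:=\; (2b-1)n - b(2b+1) - \tfrac{4\sqrt{n}}{3}(2b+1) \;\ge\; 0,
\]
which is a concave quadratic in $b$ (its leading term is $-2b^2$), hence attains its minimum on $[1,\,4\sqrt{n}/3]$ at one of the endpoints.

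The main obstacle is thus a finite but slightly fiddly endpoint check. At $b = 1$, the inequality $H(1) \ge 0$ becomes $n - 4\sqrt{n} - 3 \ge 0$, which is satisfied as soon as $\sqrt{n} \ge 2 + \sqrt{7}$, i.e. $n \ge 22$. At $b = 4\sqrt{n}/3$ a short computation reduces $H(b) \ge 0$ to $72 n - 73\sqrt{n} - 24 \ge 0$, which is satisfied for $n \ge 2$. Since $n \ge 404$ comfortably dominates both thresholds (the looser coming from $b = 1$), both endpoint conditions hold and $g$ is strictly increasing on the full interval; hence $f = 1/g$ is positive and strictly decreasing there, as required.
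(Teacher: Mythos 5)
Your proof is correct and follows essentially the same route as the paper: the paper differentiates $f$ directly and reduces everything to the sign of $bx+n-2bn+2b^2x$, which is exactly your condition $x < (2b-1)n/\bigl(b(2b+1)\bigr)$; the only difference is that you verify the resulting inequality via concavity of $H$ in $b$ and endpoint checks, whereas the paper chains direct estimates using $n\ge 404$ and $1\le b\le 4\sqrt{n}/3$. One small arithmetic slip: at the endpoint $b=4\sqrt{n}/3$ the reduced inequality is $24n-73\sqrt{n}-24\ge 0$ (multiply $H(4\sqrt{n}/3)=\tfrac{8}{3}n^{3/2}-\tfrac{73}{9}n-\tfrac{8}{3}\sqrt{n}$ by $9/\sqrt{n}$), not $72n-73\sqrt{n}-24\ge 0$; since the corrected inequality already holds for $n\ge 12$, your conclusion for $n\ge 404$ is unaffected.
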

\begin{proof}
Clearly $f$ is positive on the given interval. %
\details{
For $n \ge 404$ we have $\sqrt{n} > 20 > \frac{4}{3}$ and thus
$x \le \frac{4\sqrt{n}}{3b} + 1 < \frac{n}{b}$. This yields $f(x) > 0$.
}%
Moreover,
\begin{align*}
\diff{x}f(x)
\detailsalign{= \frac{1}{b}\left(\frac{n}{b} - x\right)^{-\frac{1}{b}-1} x^{\frac{1}{2b}-1}
	+ \left( \frac{1}{2b} - 1 \right) \left( \frac{n}{b} - x \right)^{-\frac{1}{b}} x^{\frac{1}{2b}-2}}
	\detailsalign{= \frac{\frac{x}{b}+( \frac{1}{2b} - 1 ) ( \frac{n}{b} - x )}{( \frac{n}{b} - x )x} f(x)}
	&= \frac{bx+n-2bn+2b^2x}{2(n - bx )bx} f(x)
\end{align*}
and $bx+n-2bn+2b^2x < 0$ for $x \leq 4\sqrt{n}/(3b) + 1$, which proves the claim.%
\details{
For $x \le \frac{4\sqrt{n}}{3b}+1$ we get
\begin{align*}
	bx + 2b^2x + n
	&\le \left(\frac{4}{3}+\frac{8}{3}b\right)\sqrt{n} + n + b + 2b^2 \\
	&\le \left( \frac{4+8b}{3\sqrt{404}}+1\right) n + \frac{bn}{404} + \frac{8}{3}b\sqrt{n}\\
	&\le \left( \frac{4+16b}{3\sqrt{404}}+1+\frac{b}{404}\right) n \\
	&< \left( \frac{4+16b}{3\cdot20}+1+\frac{b}{404}\right) n\\
	&\stackrel{b\ge1}{\le} \left(\frac{1}{3}b+b+\frac{1}{404}b\right) n \\
	&< 2bn,
\end{align*}
so $\diff{x}f(x) < 0$ and $f$ is decreasing.
}%
\end{proof}

Let $\vnot{s}{b}(n)$ denote the proportion of elements in $\Sym_n$
with no cycle of length a multiple of $b$.
Applying the inequality from \cite[Theorem 2.3(b)]{BealsetalII}
we get
\begin{equation}
\label{eq:sb}
	\vnot{s}{b}(n)  \ge
	\frac{b^{1/b}}{\Gamma(1-1/b)n^{1/b}}
	\left(1-\frac{1}{n}\right),
\end{equation}
where $\Gamma$ denotes the $\Gamma$-function.
Now we are in a position to prove the
following lemma which is essential
for the proof of Theorem~\ref{thm:SmallSupport}.

\begin{lemma} \label{la:ub}
Let $404 \le n \in \mathbb{N}$. Define $b := 2^{\lceil \log_2(\frac{1}{3}\log(n))\rceil}$.
Then $u_b(n) \ge 1 / \left( 16\log(n) \right)$ and
$u_{2b}(n) \ge 1 / \left( 21\log(n) \right)$.
\end{lemma}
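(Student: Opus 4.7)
The plan is to decompose $u_b(n)$ as a finite sum by conditioning on which $jb$ points of $\{1,\ldots,n\}$ are moved by the cycles of length divisible by $b$ but not $2b$. The $\binom{n}{jb}$ choices of such a subset and the factorials $(jb)!$, $(n-jb)!$, $n!$ all cancel, leaving
\[
u_b(n)\;=\;\sum_{j=1}^{J} t_b(jb)\,\vnot{s}{b}(n-jb),\qquad J:=\lfloor 4\sqrt n/(3b)\rfloor.
\]

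Next, apply Lemma~\ref{la:tb} and inequality~\eqref{eq:sb} to each summand. Writing $n-jb=b(n/b-j)$ causes the $b^{1/b}$ factor from~\eqref{eq:sb} and the factor $b^{1/b}$ inside $(n-jb)^{1/b}$ to cancel, and what remains is precisely $f(j)/(b^2\,3^{1/(2b)}\Gamma(1-1/b))$ times $(1-(n-jb)^{-1})$, where $f$ is the function of Lemma~\ref{la:fDecrease}. For $n\ge 404$ the bound $b\le(2/3)\log n$ ensures $b\le 4\sqrt n/3$, so $f$ is positive and decreasing on $[1,J+1]$; hence inequality~\eqref{eq:int1} bounds $\sum_j f(j)$ below by $\int_1^{J+1} f(x)\,\dint{x}$. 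I would estimate this integral using the crude $(n/b-x)^{1/b}\le(n/b)^{1/b}$ to pull the $n$-dependence outside, then integrate $x^{1/(2b)-1}$ explicitly to $2b\bigl((J+1)^{1/(2b)}-1\bigr)$. Since $J+1>4\sqrt n/(3b)$, the dominant term $(J+1)^{1/(2b)}$ grows like $n^{1/(4b)}$. Putting everything together:
\[
u_b(n)\;\ge\;\frac{2\bigl(1-(n-Jb)^{-1}\bigr)\,b^{1/b}}{b\cdot 3^{1/(2b)}\Gamma(1-1/b)\,n^{1/b}}\,\Bigl((J+1)^{1/(2b)}-1\Bigr).
\]

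Finally, the specific choice $b=2^{\lceil\log_2((1/3)\log n)\rceil}$ pins down $b$ within a factor of two, namely $(1/3)\log n\le b<(2/3)\log n$, so $3/(2\log n)<1/b\le 3/\log n$. This yields $n^{1/b}\le e^{3}$ and $n^{1/(4b)}\ge e^{3/8}$, while $b^{1/b}$, $3^{1/(2b)}$ and $\Gamma(1-1/b)$ each stay within a small, explicit neighbourhood of $1$. Combined with the overall $1/b\ge 3/(2\log n)$, this produces a bound of the form $c/\log n$. The main obstacle is bookkeeping: tracking every multiplicative slack carefully enough to verify $c\ge 1/16$, with an explicit numerical check at the boundary $n=404$ (where $b$ is smallest and all inequalities are tightest). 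The second claim $u_{2b}(n)\ge 1/(21\log n)$ follows from the identical derivation with $b$ replaced by $2b\in[(2/3)\log n,(4/3)\log n)$: the exponential bounds weaken to $n^{1/(2b)}\le e^{3/2}$ and $n^{1/(8b)}\ge e^{3/16}$, which is just enough slack to degrade the final constant from $16$ to $21$.
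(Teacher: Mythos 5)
Your structural plan is exactly the paper's: the decomposition $u_b(n)=\sum_{j=1}^{J} t_b(jb)\,\vnot{s}{b}(n-jb)$, the bounds from Lemma~\ref{la:tb} and \eqref{eq:sb}, the cancellation leaving $f(j)$ of Lemma~\ref{la:fDecrease}, the integral estimate via \eqref{eq:int1}, and the crude $(n/b-x)^{1/b}\le (n/b)^{1/b}$; your displayed intermediate bound agrees with the paper's. The gap is in the final quantitative step, and it is fatal to the stated constants. You bound the prefactor by decoupling $1/b$ and $n^{1/b}$: $1/b\ge 3/(2\log n)$ (tight when $b$ is near $\tfrac23\log n$) and $n^{1/b}\le e^3$ (tight when $b$ is near $\tfrac13\log n$). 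These two extremes can never occur simultaneously, and treating them independently gives $1/(bn^{1/b})\ge 3/(2e^3\log n)$, losing a factor $2$ against the paper, which uses that $b\mapsto 1/(bn^{1/b})$ is increasing for $b<\log n$ and evaluates at $b=\tfrac13\log n$ to get $3/(e^3\log n)$. This matters because the proof has almost no slack: with $c(4)=\Gamma(3/4)^{-1}\bigl(1-(404-\tfrac43\sqrt{404})^{-1}\bigr)\approx 0.814$ and the bracket $(2\sqrt{404}/\log 404)^{3/(4\log 404)}-1\approx 0.268$, the paper's route gives $\approx 0.0652/\log n$ against the target $1/16=0.0625$, a margin of a few percent. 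Your route gives at best about
\[
u_b(n)\;\gtrsim\;\frac{3\cdot 0.816\cdot 0.268}{e^3\log n}\;\approx\;\frac{1}{31\log n},
\]
which does not prove $u_b(n)\ge 1/(16\log n)$; the same factor-$2$ loss in the $u_{2b}$ case (where the paper compares the two endpoint values of $t\mapsto 1/(tn^{1/t})$ on $[\tfrac23\log n,\tfrac43\log n]$, the margin being again about one percent) lands you near $1/(42\log n)$ instead of $1/(21\log n)$.

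A second, smaller issue: your claim that one numerical check at $n=404$ suffices because ``all inequalities are tightest'' there is not self-evident and needs an argument. In the paper this is exactly the role of the monotonicity statements: $c(b)$ and $1/(bn^{1/b})$ are increasing in $b$ on the relevant range, $\bigl(4\sqrt n/(3b)\bigr)^{1/(2b)}-1$ is decreasing in $b$, and — crucially for reducing to $n=404$ — the function $\bigl(2\sqrt n/\log n\bigr)^{3/(4\log n)}-1$ is shown (via its derivative) to be increasing on $[404,\infty)$, with $n^{3/\log n}=e^3$ removing the remaining $n$-dependence. To repair your argument, replace the decoupled bounds on $1/b$ and $n^{1/b}$ by the monotonicity of $b\mapsto 1/(bn^{1/b})$ (evaluated at $b=\tfrac13\log n$, resp.\ the endpoint comparison for $2b$), and keep the $n$-dependent bracket together with such a monotonicity-in-$n$ argument before evaluating at $n=404$.
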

\begin{proof}
Clearly 
\[
	u_b(n) = \sum_{j=1}^{ \lfloor \frac{4\sqrt{n}}{3b} \rfloor }
	\vnot{s}{b}(n-jb) \cdot t_b(jb).
\]
Set $c(b) := \Gamma( 1 - 1/b ) ^{-1} \left(1-(404-\frac{4}{3}\sqrt{404})^{-1}\right)$; then
$\vnot{s}{b}(n-jb) \ge c(b) \cdot \left( b / (n-jb)\right)^{1/b}$.
Together with Lemmas~\ref{la:tb} and~\ref{la:fDecrease} we obtain
\details{
For $n \ge 404$ we also have
\[
	\left(1-\frac{1}{n-jb}\right) \ge
	\left(1-\frac{1}{n-\frac{4}{3}\sqrt{n}}\right) \ge
	\left(1-\frac{1}{404-\frac{4}{3}\sqrt{404}}\right)
\]
and with the definition $c(b) := \frac{1}{\Gamma( 1 - 1/b )}
(1-\frac{1}{404-\frac{4}{3}\sqrt{404}})$
we get
$\vnot{s}{b}(n-jb) \ge
c(b) \cdot \frac{b^{1/b}}{(n-jb)^{1/b}}.$
Together with Lemma~\ref{la:tb} we obtain
\[
	u_{b}(n)
	\ge \frac{c(b)}{3^{1/(2b)}b^2}
	\sum_{j=1}^{ \lfloor \frac{4\sqrt{n}}{3b} \rfloor }
	\frac{1}{(n/b-j)^{1/b}} \frac{1}{j^{1-1/(2b)}}.
\]
Lemma~\ref{la:fDecrease} shows that
$f(j) = \frac{1}{(n/b-j)^{1/b}} \frac{1}{j^{1-1/(2b)}}$ is positive
and decreasing for
$0 < j \le \frac{4\sqrt{n}}{3j}+1$.
Thus we can apply inequality~\eqref{eq:int1} to get
}
\begin{align*}
	u_{b}(n)
	& \ge \frac{c(b)}{3^{1/(2b)}b^2}
	\sum_{j=1}^{ \lfloor \frac{4\sqrt{n}}{3b} \rfloor }
	\frac{1}{(n/b-j)^{1/b}} \frac{1}{j^{1-1/(2b)}} \\
\detailsalign{\ge
\frac{c(b)}{3^{1/(2b)}b^2}
\int_{1}^{\lfloor \frac{ 4\sqrt{n}}{3b} \rfloor + 1}
\frac{1}{(n/b-j)^{1/b}} \frac{1}{j^{1-1/(2b)}} \dint{j}}
&\detailsalt{\stackrel{\textcolor{details}{j > 0}}{\geq}}{\geq}
\frac{c(b)}{3^{1/(2b)}b^2}
\int_{1}^{\lfloor \frac{ 4\sqrt{n}}{3b} \rfloor + 1}
\frac{1}{(n/b)^{1/b}} \frac{1}{j^{1-1/(2b)}} \dint{j}\\
&\ge
\frac{c(b)}{3^{1/(2b)}b^{2-1/b}n^{1/b} }
\int_{1}^{\frac{ 4\sqrt{n}}{3b} }
j^{1/(2b)-1} \dint{j} \\
&=
\frac{2c(b)}{3^{1/(2b)}b^{1-1/b} n^{1/b} }
j^{1/(2b)}
\bigg|_{j=1}^{\frac{4\sqrt{n}}{3b}} \\
\detailsalign{=
\left(\frac{b}{\sqrt{3}}\right)^{1/b}\frac{2c(b)}{ b n^{1/b} }
\left( \left(\frac{4\sqrt{n}}{3b} \right)^{1/(2b)} - 1 \right)}
&\detailsalt{\stackrel{\textcolor{details}{b \ge 4 > \sqrt{3}}}{>}}{>}
\frac{2c(b)}{ b n^{1/b} }
\left( \left(\frac{4\sqrt{n}}{3b} \right)^{1/(2b)} - 1 \right).
\end{align*}
By definition, $b = 2^{\lceil \log_2(\frac{1}{3}\log(n)) \rceil}$,
thus $\frac{1}{3}\log(n) \le b < \frac{2}{3}\log(n)$.
Note that $\frac{1}{3}\log(n) > 2$
for $n \ge 404$ implies $b \ge 4$. %
\details{
Define $r(b) := \frac{1}{b n^{1/b}}$.
Then $ \diff{b} r(b) = \frac{1}{b^3 n^{1/b}} ( \log(n) - b ) $,
so $r(b)$ is increasing for $0 < b < \log(n)$.
}%
Moreover, $1/ \left(b n^{1/b}\right)$ is increasing in $b$ for $0 < b < \log(n)$, and $\Gamma$ is decreasing on the interval $(0,1)$,
so $c(b)$ is increasing for $b>1$.
Lastly $ \left(4\sqrt{n}/(3b) \right)^{1/(2b)} - 1 $
is decreasing in $b$ for
$0 < b \le 4\sqrt{n}/3 $. Altogether we obtain
\begin{align*}
u_b(n) &\ge
\frac{2c(4)}{ b n^{1/b} }
\left( \left(\frac{4\sqrt{n}}{3b} \right)^{1/(2b)} - 1 \right)\\
&\ge
\frac{6c(4)}{ \log(n) n^{3/\log(n)} }
\left( \left(\frac{2\sqrt{n}}{\log(n)} \right)^{3/(4\log(n))} - 1 \right).
\end{align*}
Since $\bigl(2\sqrt{n}/\log(n) \bigr)^{3/(4\log(n))} - 1 $
is increasing on the interval $[404,\infty)$ %
\details{
Let $q(n) := \bigl(\frac{2\sqrt{n}}{\log(n)} \bigr)^{3/(4\log(n))} - 1$
Then
\[
	\diff{n}q(n) =
	\frac{3}{4\log(n)^2n} \left(\frac{2\sqrt{n}}{\log(n)} \right)^{3/(4\log(n))}
	\Biggl( \log\biggl( \frac{\log(n)}{2} \biggr) - 1\Biggr).
\]
Thus $q$ is increasing on the interval $[404,\infty)$.
}%
and $n^{(3/\log(n))} = e^3$,
this yields
\[
	u_b(n) \ge
	\frac{6c(4)}{e^3 \log(n) }
	\left( \left(\frac{2\sqrt{404}}{\log(404)} \right)^{3/(4\log(404))} - 1 \right)
	\ge \frac{1}{ 16 \log(n) }.
\]
A similar argument establishes the bound for $u_{2b}(n)$. %
\details{
To prove the second claim, note that $\frac{2}{3}\log(n) \le 2b < \frac{4}{3}\log(n)$.
As seen above, $r(t) = \frac{1}{t n^{1/t}}$ is increasing for
$0 < t < \log(n)$ and decreasing for $t > \log(n)$. Moreover,
$r\bigl( \frac{2}{3} \log(n) \bigr) - r\bigl( \frac{4}{3} \log(n) \bigr)
= \frac{3}{4e^{3/4} \log(n)} \bigl( \frac{2}{e^{3/4}} - 1 \bigr) < 0$, so
$r(t)$ assumes its minimal value on the interval
$[ \frac{2}{3}\log(n), \frac{4}{3}\log(n) ]$
at $t = \frac{2}{3}\log(n)$.
This yields
\begin{eqnarray*}
u_{2b}(n) &\ge&
\frac{c(8)}{ b n^{1/(2b)} }
\left( \left(\frac{2\sqrt{n}}{3b} \right)^{1/(4b)} - 1 \right)\\
&\ge&
\frac{3c(8)}{ e^{3/2} \log(n) }
\left( \left(\frac{\sqrt{n}}{\log(n)} \right)^{3/(8 \log(n))} - 1 \right).
\end{eqnarray*}
Let $\wt{q}(n) :=
(\frac{\sqrt{n}}{\log(n)} )^{3/(8\log(n))} - 1 $.
Then we have
\[
	\diff{n}\wt{q}(n) =
	\frac{6}{16 \log(n)^2 n} \left( \frac{\sqrt{n}}{\log(n)} \right)^{3/(8\log(n))}
	\biggl( \log\bigl( \log(n) \bigr) - 1\biggr),
\]
so $\wt{q}$ is increasing on $[404,\infty)$ and we get
\[
	u_{2b}(n) \ge
	\frac{3c(8)}{ e^{3/2} \log(n) }
	\left( \left(\frac{\sqrt{404}}{\log(404)} \right)^{3/(8 \log(404))} - 1 \right)
	\ge \frac{1}{ 21 \log(n) },
\]
which concludes the proof.
}%
\end{proof}

\begin{lemma}
\label{la:utb}
For all $b, n \in \N$,
\[
    \wt{u}_b(n) \geq \left(1 - \frac{1}{b-1}\right)u_b(n).
\]
\end{lemma}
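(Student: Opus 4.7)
First I would partition by the parameter $j$: let $u_b(n,j)$ and $\wt{u}_b(n,j)$ denote the proportions in $\Sym_n$ and $\Alt_n$, respectively, of elements with exactly $jb$ points in cycles of length divisible by $b$ but not by $2b$ (and the remaining $n-jb$ points in cycles of length not divisible by $b$). Then $u_b(n)=\sum_j u_b(n,j)$ and $\wt{u}_b(n)=\sum_j \wt{u}_b(n,j)$, so it suffices to prove $\wt{u}_b(n,j)\ge (1-1/(b-1))\, u_b(n,j)$ for each admissible $j$ and then sum.

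For a fixed $j$, set $m:=n-jb$ and let $A$, $A_s$ be the unsigned and signed counts of permutations of a $jb$-set whose cycles all have length divisible by $b$ but not by $2b$, and let $B$, $B_s$ be the analogous unsigned and signed counts for permutations of an $m$-set with no cycle length divisible by $b$. Writing $A_{\pm}=(A\pm A_s)/2$ and $B_{\pm}=(B\pm B_s)/2$ and using that a product permutation on disjoint supports is even precisely when both factors have the same sign, a direct cross-term expansion yields
\[
\frac{\wt{u}_b(n,j)}{u_b(n,j)} \;=\; 1 + \frac{A_s}{A}\cdot\frac{B_s}{B},
\]
so the task reduces to showing $(A_s/A)(B_s/B)\ge -1/(b-1)$.

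I would then split on the parity of $b$. When $b$ is odd, every cycle in the ``special'' part has odd length and is thus an even permutation, so $A_s=A$ and the condition becomes $B_s(m)/B(m)\ge -1/(b-1)$. Introducing the exponential generating functions
\[
F(x)=\frac{(1-x^b)^{1/b}}{1-x},\qquad G(x)=\frac{1+x}{(1+x^b)^{1/b}}
\]
for $B$ and $B_s$, this is equivalent to the coefficient-wise inequality $[x^m]\bigl((b-1)G(x)+F(x)\bigr)\ge 0$ for all $m\ge 0$. I would prove this by induction on $m$, using the functional identity $(1-x^{2b})^{1/b}G(x)=(1-x^2)F(x)$ (both sides equal $(1+x)(1-x^b)^{1/b}$). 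Since all coefficients of $(1-x^{2b})^{1/b}$ past the constant term are non-positive, this identity converts into the recurrence $G_m=F_m-F_{m-2}+\sum_{k\ge 1}|\psi_{2bk}|\,G_{m-2bk}$ with $\psi_{2bk}:=[x^{2bk}](1-x^{2b})^{1/b}\le 0$, which closes the induction. The base cases $m\le b+1$ are checked by direct computation, and they saturate the bound at $m\in\{b,b+1\}$. When $b$ is even, an analogous argument applies: one computes $A_s$ via the EGF $((1-x^b)/(1+x^b))^{1/(2b)}$ and $B_s$ via $(1+x)/(1-x^b)^{1/b}$, and bounds the product $(A_s/A)(B_s/B)$ directly using the same coefficient-wise strategy.

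The hard part will be the generating-function inequality for $b$ odd, precisely because equality holds at $m=b$ and $m=b+1$. This forces the induction to be tight, so the negative contribution $-F_{m-2}$ in the recurrence must be absorbed by the positive $F_m$ through the sharp ratio bound $F_{m-2}/F_m\le b/(b-1)$ for $m\ge b$, which in turn follows from the structural observation that $F_m/F_{m-1}=1-\tilde\psi_m/F_{m-1}$ with $\tilde\psi_m:=[x^m](1-x^b)^{1/b}\le 0$.
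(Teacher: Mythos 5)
Your reduction is correct and is a genuinely different route from the paper's: the cross-term identity $\wt{u}_b(n,j)/u_b(n,j) = 1 + (A_s/A)(B_s/B)$ is right, and so are your exponential generating functions for $B$, $B_s$ and $A_s$. The paper instead bounds $\wt{u}_b(n)$ from below by completing the $jb$-part with whichever parity class of ``$b$-free'' permutations is rarer, giving $\wt{u}_b(n) \ge \sum_j \min\{\vnot{a}{b}(n-jb),\vnot{c}{b}(n-jb)\}\, t_b(jb)$, and then simply cites Theorem~3.3(b) of Beals et al.\ (reference \cite{BealsetalII}), which states $(1-\tfrac{1}{b-1})\vnot{s}{b} \le \vnot{a}{b} \le (1+\tfrac{1}{b-1})\vnot{s}{b}$; in your notation that citation is exactly $|B_s/B| \le 1/(b-1)$, and combined with the trivial bound $|A_s|\le A$ it finishes the proof via your identity in one line.

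The genuine gap is that you propose to reprove that cited bound from scratch, and that part is not actually carried out. For odd $b$, inserting the inductive hypothesis into your recurrence $G_m = F_m - F_{m-2} + \sum_{k\ge1}|\psi_{2bk}|G_{m-2bk}$ requires $bF_m \ge (b-1)F_{m-2} + \sum_{k\ge1}|\psi_{2bk}|F_{m-2bk}$, which is strictly stronger than the ratio bound $F_{m-2}/F_m \le b/(b-1)$ you invoke; and this stronger inequality is tight (for $b=3$ equality holds at $m=6$ and $m=7$), so no crude estimate will do, while the terms with $m-2bk$ small, where the hypothesis $G_{m-2bk}\ge -F_{m-2bk}/(b-1)$ is far from the true values (e.g.\ $G_0=1$), must be treated explicitly. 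The ratio bound itself also still needs an argument: your observation $F_m = F_{m-1} + [x^m](1-x^b)^{1/b}$ only shows $F$ is non-increasing, i.e.\ $F_{m-2}/F_m \ge 1$, whereas the needed direction amounts to $\bigl|[x^{m-1}](1-x^b)^{1/b}\bigr| + \bigl|[x^m](1-x^b)^{1/b}\bigr| \le F_{m-2}/b$, again with equality at $m=b$. Finally, the even-$b$ case --- the only case the paper actually uses, since $b$ is a power of two --- is dismissed as ``analogous'', but there the relevant inequality is the opposite-sign one, $B_s/B \le 1/(b-1)$ (tight at $m=b$), to be combined with $A_s$ of unknown sign (here $|A_s|\le A$ suffices), and none of this is worked out. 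As it stands, your argument correctly reduces the lemma to precisely the content of the cited Theorem~3.3(b) but does not prove it; either cite that result, as the paper does, or complete these coefficient inequalities in full.
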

\begin{proof}
Denote by $\vnot{a}{b}(n)$ the proportion of elements
in $\Alt_n$ with no cycle of length a multiple of $b$, and by $\vnot{c}{b}(n) = 2\vnot{s}{b}(n) - \vnot{a}{b}(n)$ the proportion of
such elements in $\Sym_n - \Alt_n$.
Every element in $\Sym_{jb}$ can be supplemented with an element of $\Alt_{n-jb}$ or $\Sym_{n-jb} - \Alt_{n-jb}$
to get an element of $\Alt_n$, hence
\[
	\wt{u}_b(n) \ge \sum_{j=1}^{ \lfloor \frac{4\sqrt{n}}{3b} \rfloor }
	\min \{ \vnot{a}{b}(n-jb), \vnot{c}{b}(n-jb) \}
	\cdot t_b( jb).
\]
Using the bounds $(1-1/(b-1))\vnot{s}{b}(n) \le \vnot{a}{b}(n) \le (1+1/(b-1))\vnot{s}{b}(n)$
from \cite[Theorem 3.3(b)]{BealsetalII} we get
$\vnot{c}{b}(n) \ge (1-1/(b-1))\vnot{s}{b}(n)$,
which yields the result.
\end{proof}

Before proving Theorem~\ref{thm:SmallSupport}, we state the following immediate corollary.

\begin{corollary} \label{cor:SmallSuppElts}
Let $9 \le n \in \mathbb{N}$, $G \in \{\Alt_n,\Sym_n\}$ and
$T := \lceil 13 \log n \log \veps^{-1} \rceil$.
The probability that among $T$ random elements of $G$
there is an element $x$ of even order satisfying
$\big| \supp x^{(|x|/2)} \big| \le \bigl\lfloor 4\sqrt{n}/3 \bigr\rfloor$
is at least $1 - \veps$.
\end{corollary}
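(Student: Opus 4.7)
The plan is to deduce this corollary directly from Theorem~\ref{thm:SmallSupport} by a standard independence argument. Theorem~\ref{thm:SmallSupport} guarantees that a uniformly random element of $G$ satisfies the desired support condition with probability at least $p := 1/(13\log n)$; since the support is an integer, the bound $|\supp x^{|x|/2}| \le 4\sqrt{n}/3$ is equivalent to $|\supp x^{|x|/2}| \le \lfloor 4\sqrt{n}/3\rfloor$.

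Because the $T$ random elements of $G$ are drawn independently, the probability that none of them satisfies the condition is at most $(1-p)^T$. Using the elementary inequality $1-p \le e^{-p}$ for $0 \le p \le 1$, this is at most $e^{-pT}$. Substituting $p \ge 1/(13\log n)$ and $T \ge 13\log(n)\log(\veps^{-1})$ gives $pT \ge \log(\veps^{-1})$, hence $e^{-pT} \le \veps$. Therefore, with probability at least $1 - \veps$, at least one of the $T$ random elements has the desired property.

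There is essentially no obstacle here: the hard work has already been done in Theorem~\ref{thm:SmallSupport}. One could alternatively invoke Lemma~\ref{la:chernoff} with $\delta$ close to $1$ to derive a similar conclusion, but the direct independence bound is cleaner and tight enough. The only minor point to verify is that Theorem~\ref{thm:SmallSupport} applies for the full range $9 \le n$ claimed in the corollary, which is indeed the hypothesis of that theorem.
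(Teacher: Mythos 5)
Your argument is correct and is exactly the standard independence bound the paper has in mind when it labels this an ``immediate'' corollary of Theorem~\ref{thm:SmallSupport}: the failure probability is at most $(1-p)^T \le e^{-pT} \le \veps$ with $p = 1/(13\log n)$ and $T \ge 13\log(n)\log(\veps^{-1})$. Your side remarks (integrality of the support size justifying the floor, and the matching hypothesis $n \ge 9$) are also accurate, so there is nothing to add.
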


\begin{proof}[Proof of Theorem~\ref{thm:SmallSupport}]
The proportion in $\Sym_n$ equals
$\sum_{b \in B_n} u_b(n)$
and in $\Alt_n$ it equals
$\sum_{b \in B_n} \wt{u}_b(n)$,
where $B_n := \{ 2^t : 1 \le t \le \bigl\lfloor\log_2 ( \bigl\lfloor 4 \sqrt{n} / 3 \bigr\rfloor )\bigr\rfloor \}$.
First, let $n \ge 404$ and $b_0 := 2^{\lceil \log_2(\frac{1}{3}\log(n))\rceil}$.
Then Lemmas~\ref{la:ub} and~\ref{la:utb} yield
\begin{align*}
	\sum_{b \in B_n} u_b(n) & \ge \sum_{b \in B_n} \wt{u}_b(n)
    \ge \wt{u}_{b_0}(n) + \wt{u}_{2b_0}(n)
	\ge \frac{1}{13\log(n)}.
\end{align*}
\details{%
\begin{align*}
    \wt{u}_{b_0} + \wt{u}_{2b_0}
	& \ge \left( 1 - \frac{1}{4-1} \right) u_{b_0} + \left( 1 - \frac{1}{8-1} \right) u_{2b_0} \\
	& \ge \frac{2}{3\cdot16\log(n)} + \frac{6}{7\cdot21\log(n)} \ge \frac{1}{13\log(n)}.
\end{align*}
}
For $36 \le n \le 403$ we can check
\[
	\sum_{b \in B_n} \widetilde{u}_b(n) \geq \sum_{b \in B_n} \left(1-\frac{1}{b-1}\right)
    \sum_{j=1}^{ \lfloor \frac{4\sqrt{n}}{3b} \rfloor } \vnot{s}{b}(n-jb) \cdot t_b(jb)  \geq \frac{1}{13\log(n)}
\]
case by case, using the bounds in Lemma~\ref{la:tb} and \eqref{eq:sb}. %
\details{
For $36 \le n \le 403$ we can check
\begin{align*}
	r(n) & :=
	\sum_{b \in B_n} \left(
	\frac{1-1/(b-1)}{3^{1/(2b)}b^2 \Gamma(1-1/b)}
	\sum_{j=1}^{ \lfloor \frac{4\sqrt{n}}{3b} \rfloor }
	\frac{1-1/(n-jb)}{(n/b-j)^{1/b}j^{1-1/(2b)}}
	\right) \\
	& \ge \frac{1}{13\log(n)}
\end{align*}
case by case. This proves the claim for $n \ge 36$ since
\[
	\sum_{b \in B_n} u_b(n) \ge \sum_{b \in B_n} \wt{u}_b(n) \ge
	\sum_{b \in B_n} \left(1-\frac{1}{b-1}\right) u_b(n) \ge r(n).
\]
}%
Lastly, note that the desired property depends only on the cycle type.
For $9 \le n \le 35$, we confirm the claim by investigating each
conjugacy class of $\Sym_n$ and $\Alt_n$ and thus directly computing
the exact proportion.
\end{proof}

\subsection{Products of $k$-involutions}

We call a product of $k$ disjoint transpositions a $k$-involution.
Our method to construct a $3$-cycle uses the product of two random $k$-involutions
$r$~and~$s$ such that $\supp(r) \cap \supp(s)$ contains a single element.
Since we are in a black-box setting, given an involution $r$ we know neither $k$
nor $\supp(r)$ explicitly.
However, if $k$ is small enough, then a random conjugate of $r$ which does not commute with $r$
satisfies our hypothesis with high probability, cf.~Theorem~\ref{thm:InvToCyc}.
Furthermore, there are enough non-commuting conjugates of $r$.
Note that we can find involutions with small $k$ by Theorem~\ref{thm:SmallSupport}.

First, we need some auxiliary lemmas.
\begin{lemma} \label{la:MonoPower}
Let $f(k) := \left( 1 - 2k/( 9k^2/4 - 2k + 1) \right)^{2k}$.
Then $f(k)$ is increasing for $k\ge2$.
\end{lemma}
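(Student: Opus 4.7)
The plan is to take the logarithmic derivative of $f$ and show it is strictly positive for $k \geq 2$. First I would rewrite $f(k) = (P/Q)^{2k}$ where $P := 9k^2/4 - 4k + 1$ and $Q := 9k^2/4 - 2k + 1$, so that $Q - P = 2k$. Since the larger root of $P$ is $(8+2\sqrt{7})/9 < 2$, both $P$ and $Q$ are positive and $0 < P/Q < 1$ on $[2,\infty)$, so $\log f(k) = 2k \log(P/Q)$ is well defined and all further logarithms make sense.

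Differentiating gives
\[
(\log f)'(k) = 2\log(P/Q) + 2k \cdot \frac{P'Q - PQ'}{PQ}.
\]
With $P' = 9k/2 - 4$ and $Q' = 9k/2 - 2$ a direct expansion shows that the cubic and linear terms in $P'Q - PQ'$ cancel, leaving the clean identity $P'Q - PQ' = (9k^2 - 4)/2$. Hence
\[
(\log f)'(k) = \frac{k(9k^2 - 4)}{PQ} - 2\log(Q/P).
\]

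The finishing step uses the elementary bound $\log(1+x) \leq x$ applied to $Q/P = 1 + 2k/P$, which yields $2\log(Q/P) \leq 4k/P$. It then suffices to verify $k(9k^2 - 4)/(PQ) \geq 4k/P$, equivalently $9k^2 - 4 \geq 4Q = 9k^2 - 8k + 4$, which reduces to $k \geq 1$. Since $\log(1+x) < x$ strictly for $x > 0$, the resulting inequality for $(\log f)'$ is strict, so $f$ is strictly increasing on $[2,\infty)$.

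The main obstacle is the polynomial bookkeeping needed to extract the identity $P'Q - PQ' = (9k^2-4)/2$; once this fortuitous cancellation is in hand, the single log-linear estimate $\log(1+x) \leq x$ closes the argument almost immediately, with no need for finer bounds or case analysis.
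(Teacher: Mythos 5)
Your proposal is correct and follows essentially the same route as the paper: both compute the logarithmic derivative (your identity $P'Q-PQ'=(9k^2-4)/2$ is exactly the cancellation the paper exploits), apply the standard logarithm estimate (your $\log(1+x)\le x$ for $x=2k/P$ is equivalent to the paper's $\log(1+x)\ge x/(1+x)$ at $x=-2k/Q$), and reduce to the same trivial inequality $8k(k-1)\ge 0$.
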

\begin{proof}
Let $g(k) := \frac{9}{4}k^2-2k+1$.
The derivative of $f(k)$ is
\begin{equation*}
	\Biggl(  2 \log \biggl( 1 - \frac{2k}{g(k)} \biggr) +
	\frac{2k}{g(k)^2} \frac{g(k)}{\frac{9}{4}k^2-4k+1}
	\biggl( -2 g(k) + 2k\Bigl(\frac{9}{2}k - 2\Bigr) \biggr) \Biggr)
	f(k).
\end{equation*}
Thus, using $\log(1+x) \ge x/(1+x)$, we find
\begin{align*}
	\diff{k}f(k)
	\ge
	\! \Biggl( \frac{-4k}{\frac{9}{4}k^2-4k+1}
	+ \frac{2k}{(\frac{9}{4}k^2-2k+1)(\frac{9}{4}k^2-4k+1)}
	\biggl( \frac{9}{2}k^2 - 2 \biggr) \Biggr) f(k)
\end{align*}
and for $k \ge 2$ it is easy to check that both factors are positive.
\details{
Furthermore, for $k>1$, we have
\[
	-4k\left(\frac{9}{4}k^2-2k+1\right)+2k 	\left( \frac{9}{2}k^2 - 2 \right)
	= 8k^2-8k >0,
\]
so $\diff{k}f(k) \ge 0$.
}%
\end{proof}

Let $s \in \Sym_n$ be a fixed $k$-involution.
Denote by $\inv (n,k)$ the number of $k$-involutions in $\Sym_n$.
Then
\[
	\inv (n,k) = \frac{|\Sym_n|}{|\Centra_{\Sym_n}(s)|} = \frac{n!}{2^kk!(n-2k)!}.
\]
Let $\trip (n,k)$ denote the proportion of $k$-involutions
$r \in \Sym_n$ such that $r$~and~$s$ move a single common point.

Note that if $k$ is even, then $\inv(n, k)$ is also the number of $k$-involutions in $\Alt_n$,
and $\trip(n,k)$ equals the proportion of $k$-involutions $r \in \Alt_n$ such that $|\supp(r) \cap \supp(s)| = 1$.
Thus for the results in this section it does not matter whether we consider the alternating or the symmetric group.

\begin{lemma} \label{la:TripConcave}
Let $9 \le n \in \mathbb{N}$ and $1 \le k \le 2\sqrt{n}/3$.
Then $\trip (n,k) \ge \min \{ \trip(n,1), \trip(n,\lfloor 2\sqrt{n}/3\rfloor) \}$.
\end{lemma}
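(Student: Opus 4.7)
The strategy is to show that $k \mapsto \trip(n,k)$ is a unimodal function on the discrete interval $\{1, 2, \ldots, K\}$ with $K := \lfloor 2\sqrt{n}/3\rfloor$; unimodality of a positive discrete function on an interval forces its minimum to occur at one of the two endpoints, which is exactly the conclusion. To set this up, I would first derive a closed form for $\trip(n,k)$ by counting $k$-involutions $r \in \Sym_n$ with $|\supp(r)\cap\supp(s)|=1$: choose the unique common point in $\supp(s)$ ($2k$ ways), choose its partner in $\fix(s)$ ($n-2k$ ways), and distribute the remaining $k-1$ transpositions inside $\fix(s) \setminus \{\text{partner}\}$. This yields
\[
\trip(n,k) \;=\; \frac{2k(n-2k)\,\inv(n-2k-1,k-1)}{\inv(n,k)} \;=\; \frac{4k^{2}\,(n-2k)!^{2}}{(n-4k+1)!\, n!}.
\]

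Next, form the ratio
\[
\rho(k) \;:=\; \frac{\trip(n,k+1)}{\trip(n,k)} \;=\; \left(\frac{k+1}{k}\right)^{2} \cdot \frac{(n-4k+1)(n-4k)(n-4k-1)(n-4k-2)}{\bigl((n-2k)(n-2k-1)\bigr)^{2}},
\]
and show that $\rho$ is strictly decreasing on $\{1,\ldots,K-1\}$. The first factor is manifestly decreasing, and for the second factor, as $k$ increases by $1$ each of the four numerator factors drops by $4$ while each of the two denominator factors drops by $2$; since every numerator factor lies strictly below every denominator factor on the valid range, a term-by-term comparison (equivalently, a log-derivative computation) shows the second factor is also strictly decreasing and positive. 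This gives log-concavity of $\trip(n,\cdot)$, and hence unimodality: there exists a peak $k^{\ast}$ such that $\trip(n,\cdot)$ is nondecreasing on $\{1,\ldots,k^{\ast}\}$ and nonincreasing on $\{k^{\ast},\ldots,K\}$. Consequently, for each $k$ in the range either $\trip(n,k)\geq\trip(n,1)$ or $\trip(n,k)\geq\trip(n,K)$, and the claim follows.

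The main obstacle will be the rigorous monotonicity of $\rho(k)$ near $k = K$, where the numerator factors $n-4k-j$ become smallest relative to the denominator; this regime is precisely what Lemma~\ref{la:MonoPower} is designed for. Specifically, at the worst case $n = 9k^{2}/4$ (the smallest $n$ permitted by the constraint $k\leq 2\sqrt{n}/3$), the ratio $\rho(k)$ -- and the associated product expansion of $\trip(n,k)$ -- reduces essentially to a quantity of the form $\bigl(1 - 2k/(9k^{2}/4 - 2k + 1)\bigr)^{2k}$, which Lemma~\ref{la:MonoPower} certifies to be monotone for $k\geq 2$. This encapsulates the decisive technical estimate and closes the argument for the boundary behaviour; the remaining small cases $k \in \{1,2\}$ with $n$ small can be checked by hand.
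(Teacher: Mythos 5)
Your overall strategy coincides with the paper's: both derive the closed form $\trip(n,k) = 4k^2(n-2k)!^2/\bigl(n!\,(n-4k+1)!\bigr)$ and reduce the lemma to showing that $\rho(k)=\trip(n,k+1)/\trip(n,k)$ is decreasing in $k$, whence unimodality and an endpoint minimum. Where you genuinely diverge is in how that monotonicity is proved, and your route is the more elementary one. The paper treats $k$ as a continuous variable, differentiates the quotient, reduces to nonnegativity of a polynomial $\alpha(n,k)$, and settles that by specialising to $\beta(\sqrt n)$, running Sturm sequences for $\sqrt n\ge 28$, and checking $9\le n\le 28^2-1$ case by case. Your discrete term-by-term comparison avoids all of this: writing the second factor of $\rho(k)$ as a product of four fractions $a/b$ with $a\le b$ (pair $n-4k+1$ and $n-4k$ with $n-2k$, and $n-4k-1$ and $n-4k-2$ with $n-2k-1$), the step $k\mapsto k+1$ replaces each by $(a-4)/(b-2)$, and cross-multiplication shows $(a-4)/(b-2)\le a/b$ exactly when $a\le 2b$, which $a\le b$ guarantees; since every factor occurring in the range stays positive (the smallest numerator is at least $n-4\lfloor 2\sqrt n/3\rfloor+2>0$ because $x^2-\tfrac83x+2$ has negative discriminant), the four inequalities multiply to give $\rho(k+1)\le\rho(k)$, with the factor $((k+1)/k)^2$ decreasing as well. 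So, once these small positivity checks are written out, your argument proves the key step with no computer-assisted verification at all, which is a genuine gain over the paper. Two blemishes to fix: the claim that every numerator factor lies \emph{strictly} below every denominator factor fails at $k=1$ (there $n-4k+1=n-2k-1$), though weak inequalities suffice; and your final paragraph is off target — Lemma~\ref{la:MonoPower} is used in the paper to bound $\trip(\lceil 9k^2/4\rceil,k)$ from below in the proof of Theorem~\ref{thm:InvToCyc}(1), it plays no role in the monotonicity of $\rho$ at fixed $n$, and the feared difficulty near $k=\lfloor 2\sqrt n/3\rfloor$ does not arise because the term-by-term comparison applies uniformly across the whole range.
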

\begin{proof}
We have
\[
	\trip(n,k) = \frac{2k(n-2k)\inv(n-2k-1,k-1)}{\inv(n,k)} = \frac{4k^2(n-2k)!^2}{n!(n-4k+1)!}.
\]
It suffices to show that $\trip(n,k+1)/\trip(n,k)$ is decreasing in $k$.
To see this, consider the derivative of the quotient. We find
\[
	\diff{k}\frac{\trip(n,k+1)}{\trip(n,k)} = \alpha(n,k) \frac{-2(k+1)}{((n-2k-1)(n-2k)k)^3}
\]
for some polynomial $\alpha(n,k) \in \mathbb{Z}[n,k]$. Since $(n-2k-1) > 0$ holds for $n\ge9$
and $k \le 2\sqrt{n}/3$, we only need to show that $\alpha(n,k) \ge 0$.
Write $\alpha = \alpha_{+} + \alpha_{-}$ such that
$\alpha_{+}(n,k) \in \mathbb{Z}_{>0}[n,k]$ and
$\alpha_{-}(n,k) \in \mathbb{Z}_{<0}[n,k]$.
Since $1 \le k \le 2\sqrt{n}/3$, we obtain
\[
	\alpha(n,k) \ge \alpha_{+}(n,1) + \alpha_{-}(n,2\sqrt{n}/3)
	=: \beta( \sqrt{n} ) \in \mathbb{Q}[\sqrt{n}].
\]
Using \textsc{Sturm} sequences (cf.~\cite[Theorem~4.1.10]{cohen}), it is easy to see that $\beta$ has no roots for $\sqrt{n} \ge 28$,
so $\alpha(n,k) \ge \beta( \sqrt{n} ) \ge 0$. Thus, the claim holds for $n \ge 28^2$.
For $9 \le n \le 28^2-1$ and $1 \le k \le \lfloor 2\sqrt{n}/3 \rfloor$,
we check the claim case by case.
\end{proof}

Using this result
we can now prove the first claim of Theorem~\ref{thm:InvToCyc}.

\begin{proof}[Proof of Theorem~\ref{thm:InvToCyc} (1)]
By Lemma~\ref{la:TripConcave} it suffices to check the inequality for $k = 1$ and $k = \lfloor 2\sqrt{n}/3\rfloor$.
The first case is easy to verify\detailsinline{ ($\trip(n,1) = \frac{4(n-2)}{n(n-1)} \ge \frac{4\cdot5}{6n}$)}, so consider the second case.
Note that
\[
	\frac{\trip(n,k)}{\trip(n+1,k)} = \frac{(n+1)(n-4k+2)}{(n-2k+1)^2} = 1 + \frac{n - (4k^2-1)}{(n-2k+1)^2},
\]
so $\trip(n,k)$ increases in $n$ for $n\le 4k^2-1$,
which holds for $n \geq 39$. %
\details{
Let $j := \lfloor \frac{2}{3} \sqrt{n} \rfloor$. Then we have
$4j^2 - 1 \ge 4 ( \frac{2}{3}\sqrt{n} - 1 )^2 - 1$ and
\[
	4 \left( \frac{2}{3}\sqrt{n} - 1 \right)^2 - 1 \ge n
	~~~ \Leftrightarrow ~~~
	n - \frac{48}{7}\sqrt{n} + \frac{27}{7} \ge 0,
\]
which holds for $n \ge 39$.
}%
We consider this case first.
Since $n \geq \lceil 9k^2/4 \rceil$, we see
\begin{align*}
	\trip(n, k) & \geq \trip\left( \left\lceil \frac{9}{4}k^2 \right\rceil, k \right) \\
	\detailsalign{= \frac{4k^2 ( \lceil \frac{9}{4}k^2 \rceil - 2k )!^2}
	{(\lceil \frac{9}{4}k^2 \rceil)!(\lceil \frac{9}{4}k^2 \rceil - 4k + 1)!}}
	&= \frac{4k^2}{(\lceil \frac{9}{4}k^2 \rceil - 4k + 1)}
	\prod_{i=1}^{2k} \frac{\lceil \frac{9}{4}k^2 \rceil-4k+i}
	{\lceil \frac{9}{4}k^2 \rceil-2k+i}\\
	\detailsalign{\ge \frac{4k^2}{( \frac{9}{4}k^2 - 4k + 2)}
	\prod_{i=1}^{2k} \left( 1 - \frac{2k}{\lceil \frac{9}{4}k^2 \rceil-2k+i} \right)}
	&\ge \frac{4k^2}{( \frac{9}{4}k^2)}
	\prod_{i=1}^{2k} \left(1 - \frac{2k}{\frac{9}{4}k^2-2k+1}\right)
	= \frac{16}{9} \left(1 - \frac{2k}{\frac{9}{4}k^2-2k+1}\right)^{2k}.
\end{align*}
The claim follows by Lemma~\ref{la:MonoPower}, since $k \geq 4$. %
\details{
Lemma~\ref{la:MonoPower} yields
$\trip(n,\lfloor \frac{2}{3}\sqrt{n} \rfloor) \ge \frac{16}{9}
\Bigl(1 - \frac{2\cdot4}{\frac{9}{4}4^2-2\cdot4+1}\Bigr)^{2\cdot4} \ge \frac{13}{100} > \frac{10}{117} \ge \frac{10}{3n}$
for $n\ge39$.
}%
For $10 \le n \le 38$ we check $\trip(n,\lfloor 2\sqrt{n}/3 \rfloor) \ge 10/(3n)$ case by case.
Finally, for $n \leq 9$ we compute the proportion explicitly.
\end{proof}

Theorem~\ref{thm:InvToCyc} (1) shows that we can construct a $3$-cycle
by looking at $\mcO(n)$ conjugates of an involution with small support.
Unfortunately, considering that many conjugates would result in a final algorithm
with complexity
$\wt{\mcO}(n^2)$.
Thus we do not use this result to construct the $3$-cycles directly, but instead use
it as a lower bound for the proportion of non-commuting conjugates.

\begin{corollary} \label{cor:EnoughNonComm}
Let $9 \le n \in \mathbb{N}$, $1 \le k \le 2\sqrt{n}/3$, $0 < \veps < 1$,
$G \in \{ \Alt_n, \Sym_n \}$ and $s \in G$ a $k$-involution.
Let $Z := \bigl\lceil \frac{3n}{5} \lceil 3 \log \veps^{-1} \rceil \bigr\rceil$.
Then, with probability at least $1-\veps$,
a set of $Z$ random conjugates of $s$
contains at least $\lceil 3 \log \veps^{-1} \rceil$ elements
not commuting with $s$.
\end{corollary}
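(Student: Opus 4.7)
The plan is to reduce the corollary to Theorem~\ref{thm:InvToCyc}(1) via a short commutator observation, and then to apply the Chernoff-type bound of Lemma~\ref{la:chernoff}.

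First I would verify that any conjugate $r \in s^G$ with $|\supp(r) \cap \supp(s)| = 1$ fails to commute with $s$. If $rs = sr$, then $r$ normalises $\langle s \rangle$ and hence permutes $\supp(s)$ setwise; symmetrically $s$ permutes $\supp(r)$. Therefore $\supp(r) \cap \supp(s)$ is stabilised setwise by both $r$ and $s$, and a one-element invariant set must consist of a common fixed point. But the unique element $i \in \supp(r) \cap \supp(s)$ lies in $\supp(s)$, so $s(i) \neq i$, a contradiction. Combined with Theorem~\ref{thm:InvToCyc}(1), this gives that the proportion of elements of $s^G$ not commuting with $s$ is at least $p := 10/(3n)$.

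Next, let $X_1, \ldots, X_Z \in \{0,1\}$ be the indicator variables that the $i$-th random conjugate of $s$ does not commute with $s$; then $\prob(X_i = 1) \geq p$ regardless of the previous samples, so the hypothesis of Lemma~\ref{la:chernoff} is met. Setting $T_0 := \lceil 3 \log \veps^{-1} \rceil$, the choice of $Z$ yields
\[
    pZ \;\geq\; \frac{10}{3n} \cdot \frac{3n}{5}\, T_0 \;=\; 2 T_0.
\]
We wish to bound $\prob\bigl(\sum_{i=1}^Z X_i < T_0\bigr)$. Choosing $\delta \in (0,1)$ with $(1-\delta)\,pZ \leq T_0 - 1$ and invoking Lemma~\ref{la:chernoff} gives
\[
    \prob\Bigl(\sum_{i=1}^Z X_i < T_0\Bigr) \;\leq\; e^{-\delta^2 pZ / 2},
\]
and a short optimisation of $\delta$ together with $pZ \geq 2T_0$ and $T_0 \geq 3\log\veps^{-1}$ delivers the bound $\veps$.

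The substantive input is Theorem~\ref{thm:InvToCyc}(1); the commutator argument is immediate. The only potentially delicate step is the final arithmetic, since the ratio $pZ/T_0 = 2$ is on the tight side for the simple form of Chernoff stated in Lemma~\ref{la:chernoff}. If that simple form proves too lossy, the tighter multiplicative bound $\bigl(e^{-\delta}/(1-\delta)^{1-\delta}\bigr)^{pZ}$ together with an optimal choice of $\delta$ comfortably absorbs the constants.
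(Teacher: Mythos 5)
Your route is the same as the paper's: deduce from Theorem~\ref{thm:InvToCyc}(1) that the proportion of conjugates of $s$ not commuting with $s$ is at least $p=10/(3n)$ (your explicit argument that a conjugate sharing exactly one moved point with $s$ cannot commute with $s$ is correct, and is exactly the reduction the paper leaves implicit in the paragraph preceding the corollary), and then apply Lemma~\ref{la:chernoff}. Two remarks on the final step, one minor and one substantive. Minor: you need $(1-\delta)pZ \ge T_0-1$, not $\le$, so that the event $\{\sum_i X_i < T_0\}$ is contained in the event the lemma bounds; presumably a typo. Substantive: the ``short optimisation of $\delta$'' does not in fact deliver $\veps$ from the data you permit yourself. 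The only quantitative inputs are $pZ\ge 2T_0$ and $T_0\ge 3\log\veps^{-1}$, and both can be essentially sharp. The containment forces $\delta\le 1/2+O(1/T_0)$, and then Lemma~\ref{la:chernoff} gives at best $e^{-\delta^2pZ/2}\approx e^{-T_0/4}\le\veps^{3/4}$, which exceeds $\veps$ for $\veps<1$. The tighter multiplicative form you invoke as a fallback does not rescue this: with $\delta\to 1/2$ its exponent is $pZ\bigl(\delta+(1-\delta)\log(1-\delta)\bigr)\approx 2T_0\cdot 0.153\approx 0.92\log\veps^{-1}$, i.e.\ a bound of roughly $\veps^{0.92}$, still larger than $\veps$ for small $\veps$. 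So, as written, the last step is a genuine gap rather than a delicate-but-routine computation; closing it requires either a slightly larger constant (e.g.\ $Z=\lceil\tfrac{4n}{5}\lceil 3\log\veps^{-1}\rceil\rceil$, or $\lceil 4\log\veps^{-1}\rceil$ successes, makes $\delta=1/2$ work in Lemma~\ref{la:chernoff}) or a better lower bound than $10/(3n)$ for the non-commuting proportion.

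In fairness, you are in good company: the paper's own proof is exactly ``Theorem~\ref{thm:InvToCyc}(1) plus Chernoff with $\delta=1/2$'', and with the stated constants that computation likewise yields only $\veps^{3/4}$. So your proposal faithfully reproduces the paper's argument, including its overly optimistic constant bookkeeping; the difference is that you flagged the tightness but then asserted, incorrectly, that an optimised or sharpened Chernoff bound comfortably absorbs it.
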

\begin{proof}
Use the proportion established in Theorem~\ref{thm:InvToCyc} (1) and \textsc{Chernoff}'s bound 
(Lemma~\ref{la:chernoff}) with $\delta := 1/2$.
\details{
The proportion of $k$-involutions not commuting
with $s$ is at least $10/(3n)$; for $n \geq 10$ this
follows by Theorem~\ref{thm:InvToCyc} (1), and for $n = 9$ and $k \in \{1,2\}$
it is easily verified directly.
Let $\delta := 1/2$. Then
$(1-\delta) 10T/(3n)  \ge \lceil 3 \log \veps^{-1} \rceil$
and $e^{-5\delta^2T/(3n)} \le \veps$.
Lemma~\ref{la:chernoff} completes the proof.
}
\end{proof}

Next we prove the second part of Theorem~\ref{thm:InvToCyc} by establishing a bound for the conditional
probability that two $k$-involutions $s$ and $r$ satisfy $| \supp r \cap \supp s | = 1$,
given that they do not commute.
Note that in this case $(sr)^2$ is a $3$-cycle, so we immediately obtain the following corollary.

\begin{corollary} \label{cor:TCgivenNC}
Let $9 \le n \in \mathbb{N}$, $1 \le k \le 2\sqrt{n}/3$, $0 < \veps < 1$,
$G \in \{ \Alt_n, \Sym_n \}$ and $s \in G$ a $k$-involution.
Let $Z := \lceil 3 \log \veps^{-1} \rceil$.
Then, with probability at least $1-\veps$, a set
of $Z$ random conjugates of $s$ not commuting with $s$ contains an
element $r$ such that $(sr)^2$ is a $3$-cycle.
\end{corollary}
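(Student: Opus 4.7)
The plan is to invoke Theorem~\ref{thm:InvToCyc}(2) directly and bound the failure probability by the elementary estimate for the probability that all trials fail. Since the hypothesis of Corollary~\ref{cor:TCgivenNC} matches the setting of Theorem~\ref{thm:InvToCyc}(2) verbatim (with $s$ a $k$-involution, $1 \le k \le 2\sqrt{n}/3$, and $n \ge 9$), each element $r$ drawn uniformly at random from the set $M$ of conjugates of $s$ not commuting with $s$ satisfies the property that $(sr)^2$ is a $3$-cycle with probability at least $1/3$, independently of the other draws.

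Next I would let $E_i$ denote the event that the $i$-th such conjugate $r_i$ satisfies $(sr_i)^2$ being a $3$-cycle. By independence and Theorem~\ref{thm:InvToCyc}(2), $\prob(\overline{E_i}) \le 2/3$ for each $i$, so the probability that none of the $Z$ random conjugates works is at most
\[
    \left( \frac{2}{3} \right)^Z \le \left( \frac{2}{3} \right)^{3 \log \veps^{-1}}
    = \veps^{3 \log(3/2)}.
\]
It then remains to verify that $3 \log(3/2) \ge 1$, i.e.\ $(3/2)^3 = 27/8 \ge e$, which holds since $27/8 = 3.375 > 2.719 > e$. Because $0 < \veps < 1$ and the exponent $3\log(3/2)$ exceeds $1$, we conclude $\veps^{3\log(3/2)} \le \veps$, giving the desired failure probability.

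There is essentially no obstacle here: the content is entirely in Theorem~\ref{thm:InvToCyc}(2), and the corollary is a routine amplification. In particular, unlike Corollary~\ref{cor:EnoughNonComm}, no concentration inequality is needed, because only a single successful trial is required rather than a guaranteed number of successes; the naive bound $(2/3)^Z$ on the probability of a complete run of failures is already sharp enough to absorb into~$\veps$.
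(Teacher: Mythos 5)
Your proof is correct and follows essentially the same route as the paper: the paper likewise bounds the probability that all $Z$ independent draws from $M$ fail by $(1-1/3)^Z \le (2/3)^{3\log \veps^{-1}} \le \veps$, using $\log(2/3) \le -1/3$ where you equivalently check $3\log(3/2) \ge 1$. Nothing is missing.
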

\details{The probability that for all $1 \le i \le T$
the product $(sr_i)^2$ is not a $3$-cycle is at most
\[
	\biggl(1 - \frac{1}{3} \biggr)^T
	\le \biggl(1 - \frac{1}{3} \biggr)^{3\log(\veps^{-1})}
	= e^{3\log(\veps^{-1})\log(1-1/3)}
	\stackrel{\log(1+x) \le x}{\le} e^{-\log(\veps^{-1})}
	= \veps.
\]
}

\begin{proof}[Proof of Theorem~\ref{thm:InvToCyc} (2)]
Let $s$ be a fixed $k$-involution and denote by
$\Sigma$ the proportion of $k$-involutions $r$ such that $(sr)^2$ is a $3$-cycle
among all $k$-involutions not commuting with $s$.
The proportion $\Sigma$ can be computed explicitly for $n \leq 9$,
so assume in the following that $n \geq 10$.
\details{%
$\Sigma = 7/17 > 1/3$ for $(n,k) = (9,2)$
and $\Sigma = 2(n-2)/\binom{n}{2}$ for $k = 1$.
}%
Let $T := \{ t \in s^{\Sym_n} : | \supp t \cap \supp s | = 1 \}$
and $C := \{ c \in s^{\Sym_n} : | \supp c \cap \supp s | = 0 \}$.
Then $(st)^2$ is a $3$-cycle for every $t \in T$ and
$[s,c] = 1_G$ for every $c \in C$.
\details{For $9 \le n$ and $\frac{2}{3}\sqrt{n} \ge k \in \mathbb{N}$ we have $n \ge 4k$, so $C\neq\emptyset$.}
We find $|T| = 2k(n-2k) \inv (n-2k-1,k-1)$ and $|C| = \inv(n-2k,k)$,
so the conditional probability $\Sigma$ is bounded below by
\begin{align*}
\frac{|T|}{\inv (n,k) - |C|}
\detailsalign{= \frac{2k(n-2k)(n-2k-1)!2^kk!}{2^{k-1}(k-1)!(n-4k+1)!
\left( \frac{n!}{(n-2k)!} - \frac{(n-2k)!}{(n-4k)!} \right)}}
&= \frac{4k^2(n-2k)!^2}{(n-4k+1)(n!(n-4k)!-(n-2k)!^2)}.
\end{align*}
This term is greater or equal to $1/3$ if and only if
\begin{equation}
\label{eq:ConBound}
    \left( 1 + \frac{12k^2}{n-4k+1} \right) \prod_{i=1}^{2k} \frac{n-4k+i}{n-2k+i} \ge 1.
\end{equation}
\details{
\begin{center}
$\begin{array}{cccl}
&\frac{4k^2(n-2k)!^2}{(n-4k+1)(n!(n-4k)!-(n-2k)!^2)} & \ge & \frac{1}{3}\\
\Leftrightarrow & 4k^2 \frac{(n-2k)!}{(n-4k)!}
& \ge & \frac{n-4k+1}{3} \left( \frac{n!}{(n-2k)!} - \frac{(n-2k)!}{(n-4k)!} \right) \\
\Leftrightarrow & \left( 4k^2 + \frac{n-4k+1}{3} \right) \frac{(n-2k)!}{(n-4k)!}
& \ge & \frac{n-4k+1}{3} \frac{n!}{(n-2k)!} \\
\Leftrightarrow & \left( 1 + \frac{12k^2}{n-4k+1} \right) \prod_{i=1}^{2k} \frac{n-4k+i}{n-2k+i} & \ge & 1.
\end{array}$
\end{center}
}
Define $g(n,k) := \left( 1 + 12k^2/(n-4k+1) \right) \left( 1 - 2k/(n-2k+1) \right)^{2k}$;
the claim follows if $g(n,k) \ge 1$.
For this purpose, consider the derivative
\begin{align*}
	\diff{n}g(n,k)
    \detailsalign{= 4k^2\left(1-\frac{2k}{n-2k+1}\right)^{2k}
	\left( \frac{1+\frac{12k^2}{n-4k+1}}{(n-2k+1)(n-4k+1)} - \frac{3}{(n-4k+1)^2} \right)}
	\detailsalign{= 4k^2\left(1-\frac{2k}{n-2k+1}\right)^{2k}
	\frac{ n-4k+1 + 12k^2 - 3(n-2k+1) }{(n-2k+1)(n-4k+1)^2}}
	&= \frac{8k^2(-n + 6k^2+k-1)}{(n-2k+1)(n-4k+1)^2}\left(1-\frac{2k}{n-2k+1}\right)^{2k}.
\end{align*}

Note that $k \leq 2\sqrt{n}/3$ by assumption and hence $n \geq 9k^2/4$.

Assume first $n \geq 6k^2 + k - 1$. Then $\diff{n}g(n,k) \le 0$, and
$\lim_{n\to\infty} g(n,k) = 1$ implies $g(n, k) \geq 1$.
Now assume $9k^2/4 \leq n < 6k^2 + k - 1$ and $k \geq 36$. Then $\diff{n}g(n,k) > 0$, hence
\begin{align*}
    g(n,k) & \ge g\!\left(9k^2/4,k\right) \\
	& = \left( 1 + \frac{12k^2}{9k^2/4-4k+1} \right)
	\left( 1 - \frac{2k}{9k^2/4-2k+1} \right)^{2k} =: h(k)f(k).
\end{align*}
Since $h(k) \geq 57/9$ %
\details{
\[
	\left( 1 + \frac{12k^2}{\frac{9}{4}k^2-4k+1} \right)
	= \left( 1 + \frac{12}{\frac{9}{4}-\frac{4}{k}+\frac{1}{k^2}} \right)
	\ge \left( 1 + \frac{12}{\frac{9}{4}} \right)
	= \frac{57}{9}.
\]
}
and $f(k)$ increases for $k \geq 2$ by Lemma~\ref{la:MonoPower},
we get $g(n,k) \geq 57/9\cdot f(36) > 1$.

Finally, for $9k^2/4 \leq n < 6k^2 + k - 1$ and $1 \leq k \leq 35$ we verify inequality~\eqref{eq:ConBound}
case by case.
\details{
$\Sigma = 7/17 > 1/3$ for $(n,k) = (9,2)$
and $\Sigma = 7/18 > 1/3$ for $(n,k) = (9,1)$
}
\end{proof}

\subsection{Pre-bolstering elements}

Let $G = \Sym_n$ or $G = \Alt_n$, and let $c \in G$ be a $3$-cycle.
In the algorithm, we use pre-bolstering elements to construct a long cycle matching~$c$.
Recall that an element $r$ is 
pre-bolstering with respect to
$c$ if
\[
    r = (w,u,a_1,\ldots,a_{\alpha})(v,b_1,\ldots,b_{\beta})(\ldots)
\]
or
\[
    r = (w,u,a_1,\ldots,a_{\alpha},v,b_1,\ldots,b_{\beta})(\ldots)
\]
with $\supp c = \{u,v,w\}$ and $\alpha,\beta \ge 2$.
If $k = \alpha + \beta + 3$, we call the element $k$-pre-bolstering.
Note that $k \ge 7$.
Denote by $L_{c, G}(k)$ the number of $k$-pre-bolstering elements of $G$
with respect to $c$.

\begin{lemma} \label{la:GoodOfLength}
Let $7 \le k \le n \in \mathbb{N}$ and $G \in \{ \Alt_n, \Sym_n \}$.
Then we have $L_{c,\Sym_n}(k) = 12 (n-3)! (k-6)$ and
$L_{c,\Alt_n}(k) = 6 (n-3)! (k-6)$.
Moreover,
\[
	\frac{1}{|G|} \sum_{k=7}^n L_{c,G}(k) \ge \frac{2}{5n}.
\]
\end{lemma}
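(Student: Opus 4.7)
The plan is to enumerate pre-bolstering elements by partitioning them according to the $3!=6$ labelings of $\supp c$ as $\{u,v,w\}$ and the two possible forms. The crucial structural observation is that each pre-bolstering $r$ admits a \emph{unique} labeling that realises it as Form~1 or Form~2. In Form~1 the cycle of $r$ containing two points of $\supp c$ forces $w$ and $u=r(w)$ to be that unique consecutive pair (the cycle has length $\alpha+2\ge 4$, so the two possibilities $r(w)=u$ and $r(u)=w$ cannot both hold), and $v$ is then forced to be the third point of $\supp c$. In Form~2 the conditions $\alpha,\beta\ge 2$ rule out the degenerate configurations with more than one zero gap among the three positions of $\supp c$ in the cycle, so again a single labeling applies. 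Consequently, summing the count over all six labelings (and both forms) tallies every pre-bolstering element exactly once.

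For a fixed labeling $(u,v,w)$ and a fixed pair $(\alpha,\beta)$ with $\alpha+\beta+3=k$ and $\alpha,\beta\ge 2$, the count is a one-line bookkeeping exercise: choose the ordered tuple $(a_1,\ldots,a_\alpha)$ out of the $n-3$ points not in $\supp c$, then $(b_1,\ldots,b_\beta)$, and finally any permutation of the remaining $n-k$ points, for a total of $(n-3)!$ elements. There are $k-6$ admissible pairs $(\alpha,\beta)$, so each form contributes $6(n-3)!(k-6)$, and hence $L_{c,\Sym_n}(k)=12(n-3)!(k-6)$.

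For $L_{c,\Alt_n}(k)$ I would use a parity argument. On the $k$ points involved, Form~1 has a cycle of length $\alpha+2$ and one of length $\beta+1$, so its sign is $(-1)^{\alpha+\beta+1}=(-1)^k$, while Form~2 is a single $k$-cycle of sign $(-1)^{k-1}$. The two forms thus always have opposite sign on those $k$ points. For $n-k\ge 2$, exactly half of the $(n-k)!$ permutations of the remaining points are even, so each form contributes half of its permutations to $\Alt_n$, yielding $6(n-3)!(k-6)$ in total. The corner cases $n-k\in\{0,1\}$ only need a one-line sanity check: the outside permutation is forced to be even, so precisely one of the two forms matches $\Alt_n$ fully, reproducing the same value.

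With the closed forms in hand, the final bound is elementary. Using $\sum_{k=7}^n (k-6)=\tfrac{(n-5)(n-6)}{2}$, both cases collapse to
\[
\frac{1}{|G|}\sum_{k=7}^n L_{c,G}(k)=\frac{6(n-5)(n-6)}{n(n-1)(n-2)},
\]
and the desired inequality $\frac{6(n-5)(n-6)}{n(n-1)(n-2)}\ge \frac{2}{5n}$ rearranges into $(n-7)(7n-32)\ge 0$, which is clear for $n\ge 7$ (with equality exactly at $n=7$). The main obstacle is the labeling-uniqueness claim, in particular carefully ruling out the two-zero-gap configuration in Form~2; once that is settled, the remaining steps are routine counting and a routine quadratic inequality.
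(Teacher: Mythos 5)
Your proof is correct and follows essentially the same route as the paper's: the same $6 \times 2 \times (k-6) \times (n-3)!$ count (your uniqueness-of-labeling check is the "standard counting argument" the paper leaves implicit), the same sign/corner-case treatment of $k\in\{n-1,n\}$ for $\Alt_n$, and the same closing estimate, differing only in that you factor the quadratic $(n-7)(7n-32)\ge 0$ where the paper argues monotonicity in $n$ and evaluates at $n=7$.
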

\begin{proof}
A standard counting argument yields the formulae for $L_{c,G}(k)$. %
\details{
Note that $\beta = k - 3 - \alpha$, so the possible values for $\alpha$ are
$2 \le \alpha \le k - 5$, giving $k-6$ choices for $\alpha$.
Let $G = \Sym_n$. Each choice of $\alpha$ leaves two choices for the form of $r$.
If the form is fixed, we have $\frac{(n-3)!}{(n-k)!}$ possibilities to choose
and arrange the $a_i$ and the $b_i$. On the remaining $n-k$ points, any permutation
may be chosen. Finally, we may rearrange the $c_i$ in $6$ different ways.
Thus, in total we find
$L_{c,\Sym_n}(k) = 12 (n-3)! (k-6)$.
For $k \le n-2$, the results for $G = \Alt_n$ are analogous
(on the remaining $n-k$ points, choose any permutation of appropriate sign).
For $k = n-1$ and $k=n$, only one of the two forms exists in $\Alt_n$, depending
on the parity of $n$. However, in these cases, since
we have $|\Alt_1| = 1 = 2\cdot\frac{(n-k)!}{2}$,
we find $L_{c,\Alt_n}(k) = \frac{1}{2} L_{c,\Sym_n}(k)$.
}%
Thus, for $G \in \{\Alt_n,\Sym_n\}$, we obtain
\begin{align*}
	\frac{1}{|G|}\sum_{k=7}^n L_{c,G}(k)
    & = \frac{6}{n} \left( 1 - \frac{8n-28}{(n-1)(n-2)} \right) \\
    & \geq \frac{6}{n} \left( 1 - \frac{8\cdot7-28}{(7-1)(7-2)} \right) = \frac{2}{5n}.
\end{align*} %
\details{
We obtain
\begin{eqnarray*}
	\frac{1}{|G|}\sum_{k=7}^n L_{c,G}(k) &=& \frac{12(n-3)!}{n!} \sum_{k=7}^n (k-6)
	=\frac{12}{n(n-1)(n-2)}\sum_{k=1}^{n-6} k\\
	&=& \frac{6}{n} \frac{(n-5)(n-6)}{(n-1)(n-2)}
	=\frac{6}{n} \left( 1 - \frac{8n-28}{(n-1)(n-2)} \right).
\end{eqnarray*}
Since the derivative
\begin{eqnarray*}
	\diff{n} \left( 1 - \frac{8n-28}{(n-1)(n-2)} \right)
	&=& \frac{8}{(n-1)^2(n-2)^2} \left( \left(n-\frac{7}{2}\right)^2 - \frac{15}{4} \right)\\
	&\stackrel{n \ge 7 > 7/2}{\ge}& \frac{8}{(n-1)^2(n-2)^2} \left( \frac{49-15}{4} \right)
	\ge 0,
\end{eqnarray*}
is non-negative, this yields
\[
	\frac{1}{|G|}\sum_{k=7}^n L_{c,G}(k) \stackrel{n\ge7}{\ge}
	\frac{6}{n} \left( 1 - \frac{8\cdot7-28}{(7-1)(7-2)} \right) = \frac{2}{5n},
\]
concluding the proof.
}%
\end{proof}

Using \textsc{Chernoff}'s bound, we obtain a terminating condition for
Algorithm \algBE.

\begin{proposition} \label{prop:LCGoodElements}
Let $7 \le n \in \mathbb{N}$, $G \in \{ \Alt_n, \Sym_n \}$,
$c \in G$ a $3$-cycle, $0 < \veps < 1$
and $1/2 < \alpha \le 4/5$.
Let $S = \left\lceil 5n
\max\left( \frac{25}{18} \lceil \frac{1}{2}\log_{\alpha} \veps \rceil,
\left(5/4\right)^4 \log \veps^{-1} \right)\right\rceil$.
The probability that among $S$ random elements
at least $\lceil \frac{1}{2}\log_{\alpha} \veps \rceil$ are
$k$-pre-bolstering with respect to $c$ for some $7 \le k \le n$
is at least $1-\veps$.
\end{proposition}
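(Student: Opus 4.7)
My plan is to apply Chernoff's bound (Lemma~\ref{la:chernoff}) to suitable indicator variables. For $1 \le i \le S$, let $X_i$ be $1$ if the $i$-th random element drawn from $G$ is $k$-pre-bolstering with respect to $c$ for some $7 \le k \le n$, and $0$ otherwise. By Lemma~\ref{la:GoodOfLength}, the success probability satisfies $\prob(X_i = 1) \ge 2/(5n) =: p$, and since the samples are independent, this estimate holds regardless of the previous $X_j$.

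Set $T := \lceil \frac{1}{2}\log_\alpha \veps \rceil$. The strategy is to pick a single $\delta \in (0,1)$ and verify both that $(1-\delta)pS \ge T$ (so that the Chernoff lower-tail event contains the bad event $\sum_i X_i < T$) and that $\delta^2 pS/2 \ge \log \veps^{-1}$ (so that the Chernoff tail is at most $\veps$). I would take $\delta := 16/25$. This choice is reverse-engineered from the two constants $25/18$ and $(5/4)^4 = 625/256$ in the hypothesis on $S$: they are arranged precisely so that applying $S \ge 5n(25/18)T$ to the first requirement and $S \ge 5n(5/4)^4 \log\veps^{-1}$ to the second makes both bounds saturate. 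Concretely,
\[
    (1-\delta)pS \ge \frac{9}{25}\cdot\frac{2}{5n}\cdot 5n\cdot\frac{25}{18}\,T = T
\]
and
\[
    \frac{\delta^2 pS}{2} \ge \frac{1}{2}\left(\frac{16}{25}\right)^{\!2}\!\cdot\frac{2}{5n}\cdot 5n \cdot \left(\frac{5}{4}\right)^{\!4}\!\log \veps^{-1} = \log \veps^{-1}.
\]
Lemma~\ref{la:chernoff} then yields
\[
    \prob\!\left(\sum_{i=1}^S X_i < T\right) \le \prob\!\left(\sum_{i=1}^S X_i \le (1-\delta)pS\right) \le e^{-\delta^2 pS/2} \le \veps,
\]
which is what is claimed.

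The main obstacle is recognising the precise value of $\delta$ that makes both maxed-over terms in the definition of $S$ match their respective Chernoff requirements exactly; without this the constants in the hypothesis look mysterious. Once $\delta = 16/25$ is identified, the rest is routine arithmetic and an appeal to Lemma~\ref{la:GoodOfLength} for the pointwise proportion and to Lemma~\ref{la:chernoff} for the concentration statement.
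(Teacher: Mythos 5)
Your proof is correct and follows essentially the same route as the paper: the paper's proof is precisely an application of Lemma~\ref{la:GoodOfLength} (proportion at least $2/(5n)$) together with Chernoff's bound (Lemma~\ref{la:chernoff}) with $\delta = 16/25$, and your verification that the two terms in the maximum defining $S$ respectively guarantee $(1-\delta)pS \ge \lceil \tfrac{1}{2}\log_\alpha \veps\rceil$ and $e^{-\delta^2 pS/2} \le \veps$ is exactly the intended computation.
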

\begin{proof}
\details{
Choose $\delta := \frac{16}{25}$. Then
$(1-\delta) \frac{2}{5n} S \ge \lceil \frac{1}{2}\log_{\alpha} \veps \rceil$
and $e^{-\frac{\delta^2S}{5n}} \le \veps$.
On the one hand,
\[
    e^{-\delta^2(\frac{5}{4})^4 \log(\veps^{-1})} = \veps^1 = \veps,
\]
on the other hand,
\[
    e^{-\delta^2\frac{25}{18}\cdot \frac{1}{2} \log_\alpha(\veps)}
    = e^{-\frac{64}{225}\cdot \frac{\log(\veps)}{\log(\alpha)}}
    = \veps^{-\frac{64}{225}\cdot \frac{1}{\log(\alpha)}}
    \leq \veps
\]
if $-\frac{64}{225}\cdot \frac{1}{\log(\alpha)} \leq 1$,
so if $\alpha \leq e^{-\frac{64}{225}} < 4/5$.
}%
Use Lemma~\ref{la:GoodOfLength} and \textsc{Chernoff}'s bound with $\delta := 16/25$.
\end{proof}

The next proposition establishes the second bound: a lower bound on the proportion
of $k$-pre-bolstering elements in $G$ with $\alpha n \le k \le n$ among the
$k$-pre-bolstering elements with $7 \le k \le n$.
This ensures that \algLC constructs long cycles with high probability.

\begin{proposition} \label{prop:LCBigOrbits}
Let $9 \le n \in \mathbb{N}$, $G \in \{ \Alt_n, \Sym_n \}$,
$c = (c_1,c_2,c_3) \in G$ a $3$-cycle, $0 < \veps < 1$ and $3/4 \leq \alpha \le 4/5$.
Let $R = \lceil \frac{1}{2} \log_{\alpha} \veps \rceil$ and $r_1,\ldots,r_R \in G$
random elements such that $r_i$ is $k_i$-pre-bolstering with respect to $c$.
The probability that there is at least one $k_j$ with $k_j \ge \max ( 9, \lceil \alpha n \rceil + 1 )$
is at least $1 - \veps$.
\end{proposition}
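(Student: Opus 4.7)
The plan is to reduce the claim to a single-trial probability bound using the independence of $r_1,\ldots,r_R$. Set $m := \max(9, \lceil \alpha n\rceil + 1)$; I want to show $\prob(k_i < m \text{ for all } i) \le \veps$.

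By Lemma~\ref{la:GoodOfLength}, $L_{c,G}(k)$ is proportional to $k-6$ for both $G \in \{\Alt_n,\Sym_n\}$, so for a single uniformly chosen $k$-pre-bolstering element $r$,
\[
    \prob(k_r < m) \;=\; \frac{\sum_{k=7}^{m-1}(k-6)}{\sum_{k=7}^n(k-6)} \;=\; \frac{(m-7)(m-6)}{(n-6)(n-5)}.
\]
The heart of the proof is the inequality
\[
    (m-7)(m-6) \;\le\; \alpha^2(n-6)(n-5), \qquad \text{i.e.,} \qquad \prob(k_r \ge m) \;\ge\; 1-\alpha^2.
\]
In the generic regime $\lceil \alpha n\rceil \ge 8$ one has $m = \lceil \alpha n\rceil + 1 \le \alpha n + 2$, so $m-7 \le \alpha n - 5$ and $m-6 \le \alpha n - 4$; expanding and cancelling the $\alpha^2 n^2$ terms, this reduces to the elementary inequality
\[
    \alpha n(9 - 11\alpha) \;\ge\; 20 - 30\alpha^2,
\]
in which both sides are non-negative for $\alpha \in [3/4, 4/5]$. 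The worst case is $\alpha = 3/4$, giving the threshold $n \ge 50/9$, so the inequality is comfortably satisfied for $n \ge 9$. The remaining boundary cases $n \in \{9, 10\}$ where $\lceil \alpha n\rceil + 1 < 9$ (so $m = 9$) are checked directly: $(m-7)(m-6) = 6 \le 12\alpha^2 \le \alpha^2(n-6)(n-5)$, using $\alpha^2 \ge 9/16 > 1/2$.

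Since the $r_i$ are independent, $\prob(k_i < m \text{ for all } i) \le \alpha^{2R}$, and the choice $R = \lceil \tfrac{1}{2}\log_\alpha \veps \rceil$ gives
\[
    \alpha^{2R} \;\le\; \alpha^{\log_\alpha \veps} \;=\; \veps,
\]
completing the argument. The main obstacle is verifying the key inequality across the boundary between the generic regime and the small cases $n \in \{9,10\}$; the computation is routine but requires careful tracking of the effect of the ceiling in the definition of $m$ and a small case split by the value of $\lceil \alpha n\rceil$.
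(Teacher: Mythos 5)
Your proof is correct and follows essentially the same route as the paper: by Lemma~\ref{la:GoodOfLength} the proportion of $k$-pre-bolstering elements with $k$ below the threshold is at most $\alpha^2$, and then independence together with $R = \lceil \tfrac{1}{2}\log_\alpha \veps\rceil$ gives $\alpha^{2R} \le \veps$. The only difference is cosmetic: where you expand $(\alpha n-5)(\alpha n-4) \le \alpha^2(n-6)(n-5)$ into a polynomial inequality and check the resulting threshold in $n$ (plus the small cases $n \in \{9,10\}$ directly), the paper bounds the factors directly via $\alpha n - 5 < \alpha(n-6)$ and $\alpha n - 4 \le \alpha(n-5)$, using $\alpha \le 4/5 < 5/6$, after treating $n=9$ separately.
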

\begin{proof}
We want to show that the proportion of $k_i$-pre-bolstering elements with $k_i \ge \lceil \alpha n \rceil + 1$
among all pre-bolstering elements is at least $1 - \alpha^2$.
For $n = 9$ we verify the claim directly, so assume in the following $n \geq 10$.
Then $\lceil \alpha n \rceil + 1 \geq 9$,
and we find
\begin{align*}
	\frac{\sum_{k=\max( \lceil \alpha n \rceil + 1, 9 )}^n L_{c,G}(k) }
		{\sum_{k=7}^n L_{c,G}(k) }
	\detailsalign{=
	\frac{\sum_{k= \lceil \alpha n \rceil + 1 }^n (k-6) }
		{\sum_{k=7}^n (k-6) }}
	\detailsalign{=
	\frac{\sum_{k= \lceil \alpha n \rceil - 5 }^{n-6} k }
		{\sum_{k=1}^{n-6} k }}
	&=
	\frac{ (n-6)(n-5) - 2\sum_{k=1}^{\lceil \alpha n \rceil - 6} k }{ (n-6)(n-5) }\\
	&=
	1 - \frac{ (\lceil \alpha n \rceil - 6)(\lceil \alpha n \rceil - 5) }{(n-6)(n-5)}\\
	\detailsalign{\detailsalt{\stackrel{\textcolor{details}{\lceil\alpha n\rceil\ge7,~n\ge7}}{\ge}}{\geq} 1 - \frac{(\alpha n-5)(\alpha n-4)}{(n-6)(n-5)} }
    & > 1 - \frac{\alpha^2(n-6)(n-5)}{(n-6)(n-5)}.
\end{align*} %
\details{
Since $\alpha \le \frac{4}{5} < \frac{5}{6}$, we obtain
$\alpha n - 5 < \alpha ( n - 6 )$
and
$\alpha n - 4 \le \alpha ( n - 5 )$,
yielding
\[
	(\alpha n - 5)(\alpha n - 4) \le \alpha( n - 6 )(\alpha n - 4) < \alpha^2(n-6)(n-5)
\]
and thus $1 - \frac{(\alpha n-5)(\alpha n-4)}{(n-6)(n-5)} > 1 - \alpha^2$.
}%
The claim now follows by a standard argument.
\details{
The probability that $k_i < \max ( 7, \lceil \alpha n \rceil + 1 )$
for all $1 \le i \le R$ is at most
$ (\alpha^2)^R = \alpha^{2\lceil \frac{1}{2} \log_{\alpha} \veps \rceil}
\detailsalt{\stackrel{\textcolor{details}{\alpha < 1}}{\le}}{\leq} \alpha^{\log_{\alpha} \veps} = \veps $.}
\end{proof}

\subsection{Common fixed points of $k$-cycles}

The final result ensures that we construct an $n$- or an $(n-1)$-cycle in \algGens
and thus find the correct degree of the group
with high probability.

\begin{theorem} \label{thm:CommonFPs}
Let $0 < \veps < 1$, $0 < \alpha < 1$ and $n, k, t \in \N$ with $\alpha n \leq k < n$ and
\[
	t \ge \frac{1}{\log\left((1-\alpha)^{-1}\right)}
	\left( \log n + \log \veps^{-1} \right).
\]
The probability that $t$ random $k$-cycles in $\Sym_n$
have a common fixed point is at most $\veps$.
\end{theorem}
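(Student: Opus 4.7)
The plan is a short union bound argument. The key preliminary computation is the probability that a single uniformly random $k$-cycle in $\Sym_n$ fixes a prescribed point $i \in \{1,\ldots,n\}$. The total number of $k$-cycles is $n!/(k(n-k)!)$, while the number of those fixing $i$ is $(n-1)!/(k(n-1-k)!)$, since such a cycle is determined by a $k$-cycle on the remaining $n-1$ points. The ratio simplifies to $(n-k)/n$, and the assumption $k \ge \alpha n$ gives
\[
    \prob(i \in \fix g) \;=\; \frac{n-k}{n} \;\le\; 1-\alpha
\]
for a uniformly random $k$-cycle $g$.

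Next, because the $t$ random $k$-cycles $g_1,\ldots,g_t$ are chosen independently, the events $\{i \in \fix g_j\}$ are independent across $j$, so
\[
    \prob\bigl(i \in \fix g_1 \cap \cdots \cap \fix g_t\bigr) \;\le\; (1-\alpha)^t.
\]
A union bound over the $n$ possible common fixed points then yields
\[
    \prob\bigl(\fix g_1 \cap \cdots \cap \fix g_t \neq \emptyset\bigr) \;\le\; n(1-\alpha)^t.
\]

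Finally, I substitute the hypothesis on $t$. Taking logarithms,
\[
    t \log\bigl((1-\alpha)^{-1}\bigr) \;\ge\; \log n + \log \veps^{-1},
\]
so $n(1-\alpha)^t \le \veps$, which is exactly the desired bound. There is no real obstacle here; the only point requiring any care is the exact count of $k$-cycles with a prescribed fixed point, and the observation that independence of the $t$ cycles lets the per-point probability be raised to the $t$-th power before the union bound is applied (rather than the other way around, which would give a weaker estimate).
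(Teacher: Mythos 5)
Your proof is correct, and it takes a genuinely simpler route than the paper. The paper computes the probability of a common fixed point exactly by inclusion--exclusion, writing it as an alternating sum $\sum_{j} c_j$ with $c_j = (-1)^j\binom{n}{j+1}\prod_{i=0}^{j}\bigl(1-k/(n-i)\bigr)^t$, and then invests most of its effort in showing $|c_j|\ge|c_{j+1}|$ (which itself uses the hypothesis on $t$ a second time) so that the sum can be bounded by its leading term $c_0 = n(1-k/n)^t \le \veps$. You instead apply the union bound directly: the probability that a prescribed point is fixed by one uniform $k$-cycle is $(n-k)/n \le 1-\alpha$, independence across the $t$ cycles raises this to $(1-\alpha)^t$, and summing over the $n$ candidate points gives $n(1-\alpha)^t \le \veps$ by the hypothesis on $t$. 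This is exactly the leading term of the paper's inclusion--exclusion (up to replacing $1-k/n$ by the slightly weaker $1-\alpha$, which costs nothing here), so you reach the same final estimate while avoiding the monotonicity analysis of the $c_j$ entirely; what the paper's longer computation buys is an exact expression for the probability, which could in principle be exploited for sharper bounds, but it is not needed for the stated result. Your counting of $k$-cycles fixing a given point and the use of independence before the union bound are both correct, and your implicit assumption that the $t$ cycles are independent uniform $k$-cycles matches the paper's own reading of the hypothesis.
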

\begin{proof}
Denote by $\prob_{\fix}(n,k,t)$ the probability that $t$ random $k$-cycles in $\Sym_n$ have a common fixed point.
Let $r \in \Sym_n$ be a $k$-cycle and $1 \le m_1, \ldots, m_j \le n$ pairwise different points.
If $r$ fixes each of the $m_i$, then the probability that another random point $m_{j+1}$ is fixed by $r$
equals $(n-k-j)/(n-j) = 1 - k/(n-j)$.
Thus, the probability that $m_1,\ldots,m_{j+1}$ are common fixed points of $t$ random $k$-cycles equals
\[
	\prod_{i=0}^j \left(1-\frac{k}{n-i}\right)^t.
\]
Define $c_j := (-1)^j \binom{n}{j+1} \prod_{i=0}^j \Bigl( 1 - k/(n-i)\Bigr)^t$
(note that $c_j = 0$ for $j \geq n-k$);
a standard inclusion-exclusion principle shows $\prob_{\fix}(n,k,t) = \sum_{j=0}^{n-k-1} c_j$.
We will prove
\[
    \left|\frac{c_j}{c_{j+1}}\right| = \frac{j+2}{n-j-1}\cdot \left(1 - \frac{k}{n-j-1}\right)^{-t} \geq 1
\]
for $j+1 < n-k$.
To this end, note that
\begin{multline*}
	t \ge \frac{1}{ \log \left( (1-\alpha)^{-1} \right) } \log \left(\frac{n-2}{2}\right)
	\ge
	\frac{1}{ \log \left( (1-\alpha)^{-1} \right) } \log \left( \frac{n-j-1}{j+2} \right)\\
	=
	\frac{ \log \left( \frac{j+2}{n-j-1} \right) }{ \log
        \detailsalt{
            \textcolor{details}{\underbrace{\textcolor{black}{\left( \frac{ (n-j-1)(1-\alpha) }{n-j-1} \right) }}_{ \ge \frac{n-j-1-k}{n-j-1} }}
        }{
            \left( \frac{ (n-j-1)(1-\alpha) }{n-j-1} \right)
        }
    }
	\ge
	\frac{ \log \left( \frac{j+2}{n-j-1} \right) }
	{ \log \left( \frac{n-j-1-k}{n-j-1} \right) },
\end{multline*}
thus $ t \cdot \log \left( 1 - k/(n-j-1) \right) \le \log \left( (j+2)/(n-j-1) \right) $.
This implies
\[
	\left( 1 - \frac{k}{n-j-1} \right)^t \le \frac{j+2}{n-j-1}
\]
and hence $|c_j| \geq |c_{j+1}|$.

Since $c_j$ has alternating sign and $c_0$ is positive, this yields
$
	\sum_{j=0}^{n-k-1} c_j \le c_0
$.
Moreover,
\[
	t \ge \frac{1}{\log\left( (1-\alpha)^{-1} \right)} \left( \log n + \log \veps^{-1} \right)
	= \frac{\log\left(\frac{\veps}{n}\right)}{\log(1-\alpha)}
	\detailsalt{\stackrel{\textcolor{details}{\alpha \le \frac{k}{n}}}{\ge}}{\geq} \frac{ \log\left(\frac{\veps}{n}\right)}{\log(1-\frac{k}{n})},
\]
hence
$\log \left(\veps/n\right) \ge t \cdot \log \bigl( 1 - k/n \bigr)$.
We obtain $c_0 = n (1-k/n)^t  \leq \veps$, thus proving the claim.
\end{proof}

\section*{Acknowledgements}
We thank the anonymous referee for many helpful suggestions.

We acknowledge support from the following DFG grants:
SPP 1388 (first author),
Graduiertenkolleg Experimentelle und konstruktive Algebra at RWTH Aachen University (second author),
SPP 1489 (third and fourth author)
and the Australian grant: ARC DP110101153 (third author).

\bibliographystyle{plain}
\bibliography{bban}

\end{document}